\DeclareMathOperator{\rank}{rank}
\DeclareMathOperator{\diag}{diag}
\DeclareMathOperator{\adj}{adj}
\DeclareMathOperator{\PGL}{\rm PGL}
\DeclareMathOperator{\minors}{minors}
\DeclareMathOperator{\im}{im}
\DeclareMathOperator{\lcm}{lcm}
\DeclareMathOperator{\cl}{cl}
\newcommand{\GL}{\textup{GL}}
\newcommand{\PP}{\mathbb{P}}
\newcommand{\CC}{\mathbb{C}}
\newcommand{\RR}{\mathbb{R}}
\newcommand{\ZZ}{\mathbb{Z}}
\newcommand{\kfour}{\text{2-, 3-, and  4}}
\newcommand{\kthree}{\text{2- and 3}}
\newcommand{\V}{\mathrm{V}}
\newcommand{\Vaff}{\mathrm{V}_a}
\DeclareMathOperator{\ideal}{I}
\newcommand{\iideal}[1]{{I}_{#1}}
\newcommand{\row}{\mathbf{r}}
\newcommand{\kfoc}{\left(\bA \, | \, \pp \right) [\row ]_\sigma}
\newcommand{\BM}{\begin{matrix}}
\newcommand{\EM}{\end{matrix}}
\newcommand{\si}{\sigma}
\newcommand{\pp}{\mathbf{p}}
\newcommand{\bA}{\mathbf{A}}
\newcommand{\qq}{\mathbf{q}}
\newcommand{\xx}{\mathbf{x}}
\newcommand{\yy}{\mathbf{y}}
\newcommand{\Aqp}[1][{}]{\iideal{\bA , \qq_{#1} , \pp_{#1}}}
\newcommand{\Ap}[1][{}]{\iideal{\bA , \pp_{#1}}}
\newcommand{\Abarqp}[1][{}]{\iideal{\bar{\bA} , \qq_{#1} , \pp_{#1}}}
\newcommand{\Abarp}[1][{}]{\iideal{\bar{\bA} , \pp_{#1}}}
\newcommand{\GammaAqp}{\Gamma_{\bA, \qq , \pp}}
\newcommand{\GammaAp}{\Gamma_{\bA, \pp}}
\newcommand{\GammaAbarqp}{\Gamma_{\bar{\bA}, \qq , \pp}}
\newcommand{\GammaAbarp}{\Gamma_{\bar{\bA} , \pp}}
\newcommand{\minorideal}{{M^{m,1}_{\bA , \qq , \pp}}}
\newcommand{\suchthat}{\text{ s.t.  }}
\newcommand{\stacked}[2]{\left( \begin{array}{c|c|c}\hspace{-.4em}{#1}^\top&\cdots&{#2}^\top\end{array}\hspace{-.4em}\right)}
\newtheorem{lemma}{Lemma}[section]
\newtheorem{theorem}[lemma]{Theorem}
\newtheorem{proposition}[lemma]{Proposition}
\newtheorem{definition}[lemma]{Definition}
\newtheorem{corollary}[lemma]{Corollary}
\newtheorem{remark}[lemma]{Remark}
\theoremstyle{definition}
\newenvironment{example}
  {\pushQED{\qed}\examplex}
  {\popQED\endexamplex}
\numberwithin{equation}{section}
\newcommand{\frakm}{\mathfrak{m}}
\newcommand{\hide}[1]{}
\DeclareMathOperator{\Hilb}{Hilb}
\title{An Atlas for the Pinhole Camera}
\author{Sameer Agarwal}
\address{Google Inc., Seattle}
\email{sameeragarwal@google.com}
\author{Timothy Duff}
\address{Department of Mathematics, University of Washington, Seattle}
\email{timduff@uw.edu}
\author{Max Lieblich}
\address{Department of Mathematics, University of Washington, Seattle}
\email{lieblich@uw.edu}
\author{Rekha R. Thomas}
\address{Department of Mathematics, University of Washington, Seattle}
\email{rrthomas@uw.edu}
\begin{document}
\maketitle
\begin{abstract}
We introduce an atlas of algebro-geometric objects associated with image formation in pinhole cameras. The nodes of the atlas are algebraic varieties or their vanishing ideals related to each other by projection or elimination and restriction or specialization respectively. This atlas offers a unifying framework for the study 
of problems in 3D computer vision.  We initiate the study of the atlas by completely characterizing a part of the atlas stemming from the triangulation problem.
We conclude with several open problems and generalizations of the atlas.
\end{abstract}

\section{Introduction}
The standard model of a pinhole camera (also known as a \emph{projective camera}) in computer vision is a surjective linear projection
$\PP^3\dashrightarrow\PP^2.$
Such a map is specified by a $3\times 4$-matrix $A$ of rank $3$ up to scaling, which can be realized as a point in $\PP^{11}$. 
This map sends a \emph{world point} $q$ in $\PP^3$, which can be realized as a $4$-vector up to scale, 
to its image point $p \in\PP^2$, which can be realized as a $3$-vector up to scale. 
The center of the projection map, also called the \emph{camera center}, can be identified with the kernel of the representing matrix $A$. If we wish to express that a point $q$ is sent to 
the image $p$ by the camera $A$, we will write $A q\sim p$ to indicate equality up to scale.

Much work has been done on the algebraic varieties obtained from a {\em fixed} arrangement of cameras $\bar{\bf{A}} = (\bar{A}_1,\ldots,\bar{A}_m)$\footnote{To distinguish between known and unknown quantities, we use a bar over
an object to indicate specialization. For instance, $A$ stands for a symbolic $3 \times 4$ matrix denoting a camera while $\bar{A}$ 
is a $3 \times 4$ scalar matrix realizing a camera. We also use bold face letters to indicate collections.
For instance, we use $\bA$ and $\bar{\bA}$ to specify a collection of symbolic and scalar cameras respectively. }
by taking the closed image of the rational imaging map
\begin{align}\label{eq:imaging map0}
\begin{split}
\varphi_{\bar{\bA}} : \PP^3 &\dashrightarrow (\PP^2)^m \\
q &\mapsto (\bar{A}_1 q , \ldots , \bar{A}_m q ).
\end{split}
\end{align}

Such a variety is known as a \emph{multiview variety}~\cite{APT21, AST13}. Several closely-related sets have been studied in computer vision: for example, the names {\em joint image} and {\em natural descriptor} for the constructible set $\im \varphi_{\bar{\bA}}$ were coined, respectively, by Triggs~\cite{Tri95} and Heyden and \AA str\"{o}m~\cite{HA97}. Much previous work has also gone into studying various polynomials that vanish on the multiview variety, eg.~\cite{DBLP:conf/iccv/FaugerasM95,HA97,ma2004invitation,DBLP:conf/iccv/TragerHP15,Tri95}. The associated \emph{multiview ideals} have been calculated in~\cite{APT21,AST13}, and their moduli have been considered in \cite{AST13,LV20}.

Consider now the universal incarnation of this problem where we treat the cameras and the world points as unknowns. In particular, one can think about the universal imaging map
\begin{equation}\label{eq:imagining-map-m-1-n-1}
\begin{split}
    \PP^{11} \times \PP^3 &\dashrightarrow \PP^2\\
    (A , q) &\mapsto A q
\end{split}
  \end{equation}  
that sends a pair $(A,q)$ to $A q$. This map describes the image of one unknown point by one unknown camera. More generally, fixing integers $m,n\ge 1$ there is a map
\begin{align} \label{eq:imaging-map}
(\PP^{11})^m\times(\PP^3)^n\dashrightarrow(\PP^2)^{mn}
\end{align}
which, where defined, associates $m$ unknown cameras $A_1, \ldots, A_m \in \PP^{11}$ and $n$ 
unknown world points $q_1, \ldots, q_n\in \PP^3$ to their $mn$ images $A_i q_j \sim p_{ij} \in \PP^2.$\footnote{
The imaging map of~\eqref{eq:imaging-map} models a scenario in which all points are visible in every image.
One may consider other scenarios of interest in computer vision, eg.~when each point is visible in only some of the images (eg.~\cite{calTrif,PL1P}).
}

For any nonempty Zariski-open $U\subset (\PP^{11})^m\times(\PP^3)^n$ where the imaging map 
\eqref{eq:imaging-map} is defined, we can consider the Zariski closure of the graph of the imaging map restricted\footnote{We note that the definition of $\GammaAqp^{m,n}$ is independent of the choice of $U$.
As such, it is insensitive to certain physical assumptions about the camera matrices (eg.~ that they have full rank, or that their centers do not coincide.)
In particular, although a \emph{generic} point $(\bar{\bA}, \bar{\qq}, \bar{\pp}) \in \GammaAqp^{m,n}$ will be such that each $A_i$ has full rank and all $A_i q_j $ are defined, these conditions do not hold for an \emph{arbitrary} point $(\bar{\bA}, \bar{\qq}, \bar{\pp}) \in \GammaAqp^{m,n}.$
} to $U$:
\begin{equation}\label{eq:GammaAqp}
\GammaAqp^{m,n} = \operatorname{cl}\left\{ \left( (\bar{\bA}, \bar{\qq}),  \bar{\pp}\right) \in U \times (\PP^2)^{mn} \mid \bar{A}_i \bar{q}_j \sim \bar{p}_{i j} \, \forall i = 1, \ldots, m, \, j =1 , \ldots, n \right\}.
\end{equation}

The set of polynomials vanishing on all points of $\GammaAqp^{m,n}$ can be understood as the set of all constraints that must be satisfied for any valid geometry $(\bar{\bA}, \bar{\qq}, \bar{\pp})\in \GammaAqp^{m,n}.$
This set forms an ideal in the polynomial ring $\CC [\bA, \qq , \pp],$ 
generated by polynomials which are homogeneous in each of the $m+n+mn$ groups of variables $A_1, \ldots, A_m$, $q_1, \ldots ,
q_n$, $p_{1 1} , \ldots , p_{m n}$. 

\begin{definition}
\label{def:mother-ideal}
The \textbf{image formation correspondence} $\GammaAqp^{m,n}$ is the algebraic variety defined by~\eqref{eq:GammaAqp}.
We denote its vanishing ideal (\Cref{def:vanishing ideal}) by $\Aqp^{m,n}.$
\end{definition}

In this paper, we will be concerned with the structure of $\Aqp^{m,n}$ and the vanishing ideals of varieties which may be defined in terms of three natural geometric operations on  $\GammaAqp^{m,n}$.
\begin{enumerate}
\item \emph{Coordinate projection}. For example, let $\pi_{\qq} : \GammaAqp^{m,n} \to (\PP^{11})^m \times (\PP^2)^{mn}$ denote the coordinate projection $\pi_\qq (\bar{\bA}, \bar{\qq}, \bar{\pp}) = 
(\bar{\bA}, \bar{\pp}).$
We denote its image by
\begin{equation}\label{eq:GammaAp}
\GammaAp^{m,n} := \pi_{\qq} (\GammaAqp^{m,n}) = 
\{ (\bar{\bA}, \bar{\pp}) \mid \exists \bar{\qq} \text{ s.t. } (\bar{\bA}, \bar{\qq}, \bar{\pp}) \in \GammaAqp^{m,n} \} .
\end{equation}
We note that, by the projective elimination theorem (see eg.~\cite[Theorem 4.22]{MS21}), 
$\GammaAp^{m,n}$ is closed in the Zariski topology on $(\PP^{11})^m \times (\PP^{2})^{mn}.$  
\item \emph{Coordinate specialization}. For example, let $\bar{\bA}$ be a particular arrangement of cameras. 
We can form the intersection of $\GammaAqp^{m,n}$ with the coordinate planes where $A_i = \bar{A}_i$ for $i=1,\ldots , m$, and then coordinate project away from $(\PP^{11})^m$ to obtain
\begin{equation}\label{eq:GammaAbarqp}
\GammaAbarqp^{m,n} := 
\{ (\bar{\qq}, \bar{\pp} )  \mid (\bar{\bA}, \bar{\qq}, \bar{\pp}) \in \GammaAqp^{m,n} \}
.
\end{equation}
\item \emph{Projection + specialization}. Combining 1 and 2, the relevant variety is
\begin{equation}\label{eq:GammaAbarp}
\GammaAbarp^{m,n} = \{ \bar{\pp}  \mid \exists \bar{\qq} \text{ s.t. } (\bar{\bA}, \bar{\qq}, \bar{\pp}) \in \GammaAqp^{m,n} \}.
\end{equation}
\end{enumerate}
\begin{figure}[t]
\begin{center}
\begin{tikzcd}[]
& & 
{\Gamma_{\bar{\bA}, \qq,\pp}^{m,n} 
\arrow[dl] 
\arrow[from=dr, color=red]
} 
& 
{
\Gamma_{\bA, \qq, \bar{\pp}}^{m,n} 
}
& 
{\Gamma_{\bA, \bar{\qq},\pp}^{m,n} 
\arrow[dr]  
}
& & 
\\
&
{
\Gamma_{\bar{\bA}, \pp}^{m,n}
} 
& 
\Gamma_{\bA, \bar{\pp}}^{m,n} \arrow[from=ur, crossing over]
& 
{
\Gamma_{\bA, \qq, \pp}^{m,n} 
\arrow[dr] 
\arrow[dl]
\arrow[u,color=red, crossing over]
\arrow[ur,color=red, crossing over]
}
&
{\Gamma_{\qq,\bar{\pp}}^{m,n}}\arrow[from=ul, crossing over]
& 
{\Gamma_{\bar{\qq}, \pp}^{m,n}}
&
\\ 
&
& 
{
\Gamma_{\bA, \pp}^{m,n}
\arrow[ul,color=red]
\arrow[dr] 
\arrow[u,color=red]
} 
& 
& 
{
\Gamma_{\qq, \pp}^{m,n}
\arrow[ur,color=red]
\arrow[dl] 
\arrow[u,color=red]
} 
& 
&
\\ 
&
& 
& 
{
\Gamma_{ \pp}^{m,n}
} 
& 
&
&
\end{tikzcd}
\end{center}
\caption{{\bf Atlas of the Pinhole Camera.} Red arrows indicate variable specialization and black arrows indicate projection. An analogous diagram can be constructed where the arrows remain the same but the varieties are replaced by their vanishing ideals.}
\label{fig:full-diagram}
\end{figure}
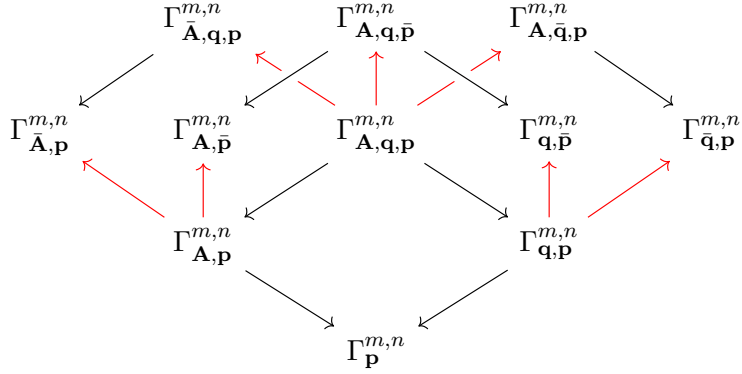

All the non-trivial varieties (and their vanishing ideals) obtained in this way can be organized as shown in~\Cref{fig:full-diagram}\footnote{Not every result of successively applying projection and specialization operations to $\Gamma_{\bA, \qq, \pp}^{m,n}$ is included here.  For example $\Gamma^{m,n}_{\bA}$ and $\Gamma^{m,n}_{\qq}$ are trivial. Similarly $\Gamma^{m,n}_{\bar{\bA},\qq,\bar{\pp}}$ and $\Gamma^{m,n}_{\bA,\bar{\qq},\bar{\pp}}$ are defined by linear equations and not interesting for projective cameras. However this can change as the model for the camera is varied. For example, $\Gamma^{m,n}_{\bA,\bar{\qq},\bar{\pp}}$ is an interesting nonlinear variety for Euclidean cameras. See~\Cref{sec:open-problems} for more. }. Red arrows indicate variable specialization and black arrows indicate variable elimination.

The aim of this paper is to motivate and initiate the study of this diagram which we will call the {\em Atlas of the Pinhole Camera}. We call it an atlas because it is a systematic collection of algebro-geometric objects associated with the pinhole camera that also captures the relationships between them, telling us how one can travel from one object to another. 
For example, knowing that a variety is obtained by specializing a group of variables (red arrow) can be used to easily compute its dimension as the difference of the dimensions of two varieties. 
See \Cref{sec:dimension-counts} for dimension counts for each of the varieties in the atlas.

\subsection{Computer Vision and the Atlas}\label{sec:estimation}
As the ideal $\Aqp^{m,n}$ describes {\em all} the algebraic relationships that hold between cameras, world, and image points in 3D reconstruction, many of the problems studied in 3D computer vision (also known as multiview geometry) can be described in terms of the varieties and ideals that occur in the atlas. So we begin our study of the atlas by summarizing what is known about each node and how it relates to problems in multiview geometry. 

The image formation correspondence, $\Gamma_{\bA, \qq, \pp}^{m,n}$ is the starting point of our study. 
Given noisy image observations $\widetilde{\pp}$, the problem of finding $\bA$ and $\qq$ such that they best explain these image observations is the {\em reconstruction problem}. Other names for this problem are {\em Structure from Motion} (SfM)~\cite{HZ04} and {\em Simultaneous Localization And Mapping} (SLAM)~\cite{thrun2005probabilistic}.

Assuming Gaussian noise, the maximum likelihood estimate of $\bA$ and $\qq$ can be found by solving the following optimization problem:
\begin{align}
		\displaystyle\arg \hspace{-.4em} \min_{(\bar{\bA},\bar{\qq}, \bar{\pp})} \sum_{i}^n\sum^{m}_{j}\|\bar{p}_{ij} - \widetilde{p}_{ij} \|^2 \suchthat (\bar{\bA},\bar{\qq}, \bar{\pp}) \in \GammaAqp^{m,n}. \label{eq:bundle-adjustment}
\end{align}
The quantity $\|\bar{p}_{ij} - \widetilde{p}_{ij}\|^2$, known as the {\em reprojection error}, is understood to be the squared Euclidean norm between dehomogenized $p$ and $\widetilde{p}$. That is, we assume that the $\bar{p}$ and $\widetilde{p}$ are finite points. 
The optimization problem~\eqref{eq:bundle-adjustment} is also known as the {\em bundle adjustment} problem~\cite{triggs1999bundle}. {\em Projective factorization} is another method for solving the reconstruction problem~~\cite{nasihatkon2015generalized,triggs1996factorization}. In this method, 
one tries to find a nearby feasible point on $\Gamma^{m,n}_{\bA,\qq, \pp}$ by solving a matrix factorization problem.

Eliminating $\mathbf{q}$ (also known as {\em structure}) from the constraint set in~\eqref{eq:bundle-adjustment} we obtain $\GammaAp^{m,n}$ and the {\em bundle adjustment without structure} problem:
\begin{align}
    \arg\min_{\bar{\bA}, \bar{\pp}} \sum_{i}^m\sum_{j}^n\|\bar{p}_{ij} - \widetilde{p}_{ij}\|^2 \text{ s.t. } (\bar{\bA}, \bar{\pp}) \in \GammaAp^{m,n}.
    \label{eq:bundle-adjustment-without-structure}
\end{align}
Bundle adjustment without
structure is of interest because eliminating the
world points $\mathbf{q}$ significantly reduces the dimensionality of the problem~\cite{persson2019global,rodriguez2011reduced,rupnik2020towards,isprs-annals-IV-2-W3-81-2017,steffen2010relative}. The two-view $(m=2)$ version of this problem is particularly important, as it is usually the first step in incremental 3D reconstruction algorithms.

Similarly $\Gamma_{\qq, \pp}^{m,n}$ is obtained from $\GammaAqp^{m,n}$ by eliminating the cameras (also known as {\em motion}) and $\Gamma_{\pp}^{m,n}$ is obtained from  $\GammaAqp^{m,n}$ by eliminating both cameras and points.
One can formulate the reconstruction problem as a nearness problem on each of these varieties, and each formulation leads to its own version of the bundle adjustment problem. As a result we will refer to all four ideals $\Ap^{m,n}, \Aqp^{m,n}, \iideal{\qq , \pp}^{m,n}, \iideal{\pp}^{m,n}$ as {\em bundle adjustment ideals}.

Of these four ideals, only $\Ap^{m,n}$ has been studied to a certain extent~\cite{DBLP:conf/iccv/TragerHP15}. 
Before our work, very little was known about the structure of the other three ideals. This may come as a surprise to the reader given the importance of the reconstruction problem in 3D computer vision. 
The lack of study can be explained by computational reasons. Substituting $p_{ij} \sim A_i q_j$ into the objective and dehomogenizing it gives us the more commonly occurring form of~\eqref{eq:bundle-adjustment} as an unconstrained rational optimization problem. This form is preferred in practice because it can be solved much more efficiently using simpler algorithms than the form in~\eqref{eq:bundle-adjustment}~\cite{triggs1999bundle}. 
Similarly, the bundle adjustment without structure problem is usually solved by approximating~\eqref{eq:bundle-adjustment-without-structure} by an unconstrained optimization problem where the objective is the sum of squares of certain polynomials (the so-called epipolar and trifocal constraints~\cite{HZ04}).

These transformations have the disadvantage that they obscure the geometric structure of the problem. For example, by formulating the reconstruction problem as an optimization problem over the variety $\Gamma^{m,n}_{\bA,\qq, \pp}$, we can study it using the tools of complex and real algebraic geometry. It also reveals two other versions of the problem not considered before. This perspective has been useful in constructing effective algorithms for solving the {\em triangulation problem}.

In {\em triangulation}~\cite{HARTLEY1997146}, the cameras $\mathbf{A} =\bar{\bA}$ are known and fixed,  we are given the noisy images $\widetilde{\pp}$ of an unknown world point, and we seek a solution to 
	\begin{align}
		\arg\min_{\bar{\qq}, \bar{\pp}} \sum_i^m \|\bar{p}_i - \widetilde{p}_i\|^2 \text{ s.t. } (\bar{\bA}, \bar{\qq}, \bar{\pp}) \in \GammaAqp^{m,1}, 
  \label{eq:triangulation}
	\end{align}	 
 i.e., find the world point $q$ that best explains the image observations. The variety $\Gamma_{\bar{\bA}, \qq,\pp}^{m,n}$ is the slice of $\Gamma_{\bA, \qq, \pp}^{m,n}$ that is obtained by fixing the cameras. So the above optimization problem is better re-written in terms of $\Gamma_{\bar{\bA},\qq,\pp}^{m,1}$ as:
	\begin{align}
		\arg\min_{\bar{\qq}, \bar{\pp}} \sum_i^m \|\bar{p}_i - \widetilde{p}_i\|^2 \text{ s.t. } (\bar{\qq}, \bar{\pp}) \in \Gamma_{\bar{\bA},\qq,\pp}^{m,1}. 
  \label{eq:triangulation2}
	\end{align}	 
The {\em multiview variety} $\Gamma_{\bar{\bA},\pp}^{m,1}$ is obtained from $\Gamma_{\bar{\bA},\qq,\pp}^{m,1}$ by eliminating the world point from it. As we noted earlier, the multiview variety and sets related to it are perhaps the most well-studied objects in computer vision~\cite{AST13,HZ04,HA97,ma2004invitation,DBLP:conf/iccv/TragerHP15,Tri95}. Using $\Gamma_{\bar{\bA},\pp}^{m,1}$ we can re-write the triangulation problem as:
\begin{align}
		\arg\min_{\bar{\pp}} \sum_i^m \|\bar{p}_i - \widetilde{p}_i\|^2 \text{ s.t. }  \bar{\pp} \in \GammaAbarp^{m,1}. 
  \label{eq:structure-less-triangulation}
	\end{align}	 
This form of the triangulation problem has been used effectively in practice~\cite{AholtAgarwalThomas2012, HARTLEY1997146,lindstrom2010}. We will refer to  $\Abarqp^{m,n}$ and $\Abarp^{m,n}$ as the {\em triangulation ideals}.

The optimization problem in~\eqref{eq:structure-less-triangulation} finds the nearest point on the variety $\GammaAbarp^{m,1}$. One measure of complexity of problems of this form is the  concept of the {\em Euclidean distance degree} of a variety~\cite{draisma2016euclidean}. Recently the affine Euclidean distance degree of $\GammaAbarp^{m,1}$ was computed~\cite{maxim2020euclidean}, settling a conjecture of St\'ewenius et al~\cite{stewenius2005hard}. 

In resectioning~\cite[Chapter 7]{HZ04}, the world points $\qq = \bar{\qq}$ are known and fixed, we are given their noisy images $\widetilde{\pp}$ by an unknown camera $A$ and we seek a solution to:
	\begin{align}
		\arg\min_{\bar{\bA}, \bar{\pp}} \sum_j^n \|\bar{p}_j - \widetilde{p}_j\|^2 \text{ s.t. } (\bar{\bA}, \bar{\qq}, \bar{\pp}) \in \Gamma^{1,n}_{\bA, \bar{\qq},\pp}.
  \label{eq:resectioning}
	\end{align}
i.e., find the camera matrix $A$ that best explains the image observations.
Resectioning is also known as the {\em Perspective-$n$-Point Problem}. Like triangulation, it is a fundamental problem in 3D computer vision and a considerable effort has been devoted to solving it~\cite{lepetit2009epnp}.  The variety $\Gamma_{\bA, \bar{\qq},\pp}^{1,n}$ is the slice of $\Gamma_{\bA, \qq, \pp}^{1,n}$ that is obtained by fixing the world points, so like triangulation we can re-write~\eqref{eq:resectioning} as an optimization problem over $\Gamma_{\bA, \bar{\qq}, \pp}^{1,n}$. Also like triangulation, we can eliminate $\bA$ and formulate resectioning as a nearest point to the variety problem on $\Gamma_{\bar{\qq},\pp}^{1,n}$. As a result we will refer to $\ideal^{m,n}_{\bA,\bar{\qq},\pp}, \ideal^{m,n}_{\bar{\qq},\pp}$ as {\em resectioning ideals}.

The resectioning and triangulation problems are in a sense duals of each other. So one might be inclined to think that the structure of $\ideal_{\bar{\qq},\pp}^{m,n}$ would be as well-studied as that of $\ideal_{\bar{\bA},\pp}^{m,n}$. This however is not the case, very little is known about the structure of the resectioning ideals $\ideal^{m,n}_{\bA, \bar{\qq},\pp}$ and $\ideal_{\bar{\qq},\pp}^{m,n}$ and their varieties~\cite{368154,DBLP:conf/eccv/SchaffalitzkyZHT00}.

Let us now talk about the three varieties that are obtained by fixing the image points $\pp=\bar{\pp}$. From $\Gamma_{\bA, \pp}^{m,n}$ we obtain $\Gamma_{\bA, \bar{\pp}}^{m,n}$. 
The vanishing ideal of $\Gamma_{\bA, \bar{\pp}}^{m,n}$ is the set of polynomials on camera matrices that satisfy given image observations. 
Among these is a distinguished set of polynomials which are multilinear forms over the image coordinates. Longuet-Higgins was the first to study the matrix defining the bilinear form for a pair of Euclidean cameras -- the {\em essential matrix}~\cite{demazure:inria-00075672,hartley1995investigation,longuet1981computer}. The generalization of the essential matrix to projective cameras is known as the {\em fundamental matrix}~\cite{luong1996fundamental}. For three and four cameras we get the trifocal tensor~\cite{aholt2014ideal,hartley1997lines,HZ04} and the quadrifocal tensor~\cite{shashua2000structure, oeding2017quadrifocal} respectively. 
We note that the varieties of these tensors and their vanishing ideals have been studied~\cite{aholt2014ideal,oeding2017quadrifocal,demazure:inria-00075672}.
However, they do not appear in our atlas.

When we are given the minimal number of image observations such that $\Gamma^{m,n}_{\bA, \bar{\pp}}$ is zero dimensional (modulo $\PGL_4$), finding cameras $\bar{\bA}$ that lie on $\Gamma_{\bA, \bar{\pp}}^{m,n}$ is a fundamental problem in computer vision and the subject of intensive study. These problems are known as {\em minimal problems}.
Starting with Nist\'er's work~\cite{nister2004efficient} on estimating the essential matrix using five point correspondences, there has been an explosion of work on using methods of computational algebraic geometry for solving these problems~\cite{kukelova2011polynomial,larsson2017efficient,stewenius2005grobner}.

From $\Gamma_{\qq, \pp}^{m,n}$ we get $\Gamma_{\qq, \bar{\pp}}^{m,n}$, by fixing the image points $\pp=\bar{\pp}$. The vanishing ideal of this variety is the set of polynomials that a set of 3D points must satisfy to explain a set of given image observations. Finally, we have the intriguing variety $\Gamma_{\bA, \qq, \bar{\pp}}^{m,n}$. Points on this variety correspond to 3D reconstructions determined by a set of image observations. A particularly interesting case is the two view, seven point problem, which is closely related to the {\em seven point algorithm} for estimating the fundamental matrix~\cite{HZ04}. 

The above is a necessarily brief presentation. However, we hope that we have convinced the reader that the nodes/objects of the atlas are related to key problems in 3D computer vision. Some of these objects have been studied before, but many remain completely unexplored.
We hope that by systematically constructing these objects as slices and projections of a single variety, and organizing them in the atlas will reveal more of their shared structure and propel their study.

In the remainder of this paper, we initiate the study of the atlas by exploring a part of it in detail.  
The next section introduces our main results.

\subsection{A Square in the Atlas}
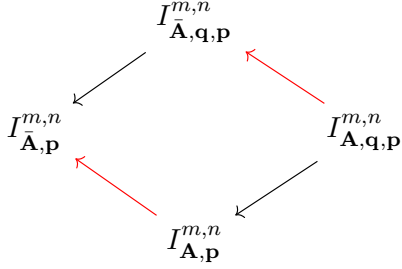
\begin{figure}[t]
\begin{center}
\begin{tikzcd}[]
&&
{
\Abarqp^{m,n}
\arrow[dl]
} 
& & 
\\
&
{
\Abarp^{m,n}
} 
&&
{
\Aqp^{m,n} 
\arrow[ul,color=red]
\arrow[dl]
}
&
\\ 
&
& 
{
\Ap^{m,n}
\arrow[ul, color=red]
} 
\end{tikzcd}
\end{center}
\caption{The four ideals studied in this paper.
}\label{fig:ideals}
 \end{figure}

Polynomial constraints, particularly those which are implied by rank constraints, pervade the multiview geometry literature.
An extensive catalogue of these constraints may be found, for instance, in~\cite[Chapter 8]{ma2004invitation}. 
In the context of the atlas, many of these constraints are sufficient to describe various varieties locally.
For instance, we have
\begin{equation}\label{eq:derive-rank-constraint}
Aq \sim p \quad \Rightarrow \quad \exists \lambda \suchthat Aq = \lambda p \quad \Rightarrow \quad \rank \begin{pmatrix} Aq &p \end{pmatrix} \le 1.
\end{equation}
Thus, the condition $Aq \sim p$ may be expressed as the vanishing of the $2\times 2$ minors of $\left( \BM Aq & p \EM \right),$ giving equations defining $\GammaAqp^{m,n}$ near a generic point.
However, the question of whether these equations \emph{globally} define the full closed correspondence $\GammaAqp^{m,n}$ is more subtle.
In fact, already for $2$ symbolic cameras $A_1,A_2$ and image points $p_1, p_2,$ the $2$-focal polynomial (cf.~\Cref{def:k-focal})
\[
\det \begin{pmatrix}
A_1 & p_1 & 0 \\
A_2 & 0   & p_2
\end{pmatrix} 
\]
is contained in $\Aqp^{2,1},$ and yet this polynomial is not in the ideal generated by all $2\times 2$ minors of the matrices $\left( \BM A_1 q & p_1 \EM \right)$ and $\left( \BM A_2 q & p_2 \EM \right).$ 
Thus, in a precise sense, the rank constraints of~\eqref{eq:derive-rank-constraint} are not complete.

The \textbf{main results}
of our paper characterize the four ideals appearing in~\Cref{fig:ideals} for all $m$ and $n.$
This square, although not comprising the full atlas, is a natural point of departure since it includes both $\Aqp^{m,n}$ and the multiview ideal $\Abarp^{m,1}.$

In the case of the triangulation ideals $\Abarqp^{m,n}$ and $\Abarp^{m,n},$ ~\Cref{thm:sum-ideal} and \Cref{thm:sum-ideal-GB}, identify simple, explicit generators and Gr\"{o}bner bases under suitable genericity assumptions. For the bundle adjustment ideals  $\Aqp^{m,n}$ and $\Ap^{m,n},$ our characterization is more subtle.
\Cref{thm:saturation} determines these ideals up to saturation by certain minors of the matrix of indeterminates $\stacked{A_1}{A_m}.$

Studying the bundle adjustment ideals with their corresponding triangulation ideals is revealing in interesting ways. 
Despite being closely related, many facts about the triangulation ideals do not transfer to their bundle adjustment ideals---see~Examples~\ref{ex:Ap-4}, \ref{ex:Ap-n-2-m-2}, and~\ref{ex:Ap-n-3-m-2}. The seemingly simple act of making the cameras symbolic gives the bundle adjustment ideals a more elaborate structure. It also offers further insight into the structure of the triangulation ideals. For instance, it explains the origin of the quadrifocal tensor.

The quadrifocal tensor and the associated 4-view constraints~\cite{shashua2000structure} are an oddity in multiview geometry, where they are known to be redundant -- i.e. they are not needed to cut out the multiview variety. However, they are needed to form a Gr\"obner basis for the multiview ideal~\cite{AST13,APT21}.
We show in~\Cref{ex:Ap-4} that, unlike the case of $\Gamma^{m,1}_{\bar{\bA},\pp}$, four-view constraints are necessary to cut out the variety $\GammaAp^{m,1}$, and that these constraints only become redundant after specialization to a particular arrangement of cameras.

The structure of the ideals $\Ap^{m,1}$ and $\Aqp^{m,1}$ plays an important role in our study.
In~\Cref{sec:Ap}, we determine explicit generators for $\Ap^{m,1}$, which form a Gr\"{o}bner basis for a large class of term orders. 
Using this result, we then recover the strongest known results about the multiview ideal $I^{m,1}_{\bar{\bA},\pp}$ as simple corollaries using specialization arguments.
\Cref{sec:Aqp} and~\Cref{sec:Abarqp} follow the same pattern, first giving an explicit Gr\"{o}bner basis for $\Aqp^{m,1}$, and then specializing to obtain $\Abarqp^{m,1}$. 
We then establish our main results in~\Cref{sec:n-points} as consequences of the $n=1$ cases treated in Sections~\ref{sec:Ap}--\ref{sec:Abarqp}. 
Closing the paper, we list several open problems and further research directions centered around the atlas from \Cref{fig:full-diagram}.

\subsection{Notation}
The notation $p \sim p'$ is used to denote equality of $p$ and $p'$ in projective space.

In sections ~\ref{sec:Ap}--\ref{sec:Abarqp}, we restrict attention to the case of a single world point 
$q \in \PP^3$ whose image in the $i$th camera $A_i$, for $i=1,\ldots,m$, is $p_i \in \PP^2$. 
We will use MATLAB notation to refer to subentities of $A, q$ and $p$: 
$A_i[j,k]$ for the entry of matrix $A_i$ in row $j$ and column $k$, $q[j]$ for the $j$th entry 
of $q$ and 
$p_i[j]$ for the $j$th entry of $p_i$.
When 
multiple world points are involved we let $p_{ij}$ be the image of $q_j$ in $A_i$ for each $j=1,\ldots , n.$

The reader has already encountered our use of a bar over an object to indicate specialization. 
For instance, $A$ stands for a symbolic $3 \times 4$ matrix denoting a camera while $\bar{A}$ is a $3 \times 4$ scalar matrix realizing a camera. 
Boldface font will be reserved for sets of variables or ordered row indices of submatrices.
For example, if $\emptyset \neq \row \subseteq \{1,2,3\}$ is a collection of row indices 
of $A_i$, we let $A_i[\row,:]$ denote the $|\row| \times 4$ submatrix of $A$ 
consisting of the rows indexed by $\row$ in the usual order. 
The notation $\bA, \mathbf{q}, \mathbf{p}$ will denote collections
of $m$ cameras, $n$ world points and the corresponding $mn$ image points and $\bar{\bA}, \bar{\mathbf{q}}, \bar{\mathbf{p}}$ 
will refer to their specializations.

\subsection*{Acknowledgements}
We thank Jessie Loucks Tavitas, Erin Connelly \& Craig Citro for helpful discussions. Timothy Duff acknowledges support from the National Science Foundation Mathematical Sciences Postdoctoral Research Fellowship (DMS-2103310). Max Lieblich was partially supported by a National Science Foundation Grant (DMS - 1902251). Rekha Thomas was partially supported by a National Science Foundation grant (DMS - 1719538).

\section{Tools}\label{sec:notation-background}
\subsection{Vanishing Ideals}
The homgoeneous coordinate ring of the product of projective spaces $\PP^{n_1} \times \cdots \times \PP^{n_k}$ is the polynomial ring $\CC [\xx_1, \ldots , \xx_k]$ in the $k$ sets of indeterminates $\xx_i = \{ x_{i1}, \ldots , x_{i(n_i+1)} \}$, which we equip 
with the multigrading $\deg (x_{ij}) = e_i \in \ZZ^k.$ 
A set of polynomials $\{ f_1, \ldots , f_s\} \subset \CC [\xx_1, \ldots , \xx_k]$, 
where each $f_i$ is homogeneous with respect to the multigrading, has a 
well-defined vanishing locus:
\[
\mathrm{V} (f_1, \ldots, f_s) = \{ x \in \PP^{n_1} \times \cdots \times \PP^{n_k} \suchthat f_1 (x) = \cdots = f_s (x) = 0  \}.
\]
These algebraic varieties are the closed sets in the Zariski topology on $\PP^{n_1} \times \cdots \times \PP^{n_k}$.
The polynomials $f_1,\ldots , f_s$ are said to vanish on a set $X$ if $X \subset \mathrm{V} (f_1, \ldots , f_s).$

\begin{definition}
\label{def:vanishing ideal}
The vanishing ideal $\ideal(X)$ of a set $X\subset \PP^{n_1} \times \cdots \times \PP^{n_k}$ 
is the ideal  generated by all $f \in \CC[\xx_1, \ldots , \xx_k]$ vanishing on $X.$
\end{definition}

For polynomials $f_1, \ldots , f_s \in \CC [\xx_1, \ldots , \xx_k]$, we use the standard notation $\langle f_1, \ldots , f_s \rangle$ for the ideal they generate.
The ideal $\langle f_1, \ldots , f_s \rangle$ need not be a vanishing ideal in the sense of~\Cref{def:vanishing ideal}, although it will always be contained in the vanishing ideal 
$\ideal(\V (f_1, \ldots , f_s))$.
A necessary condition for the equality of these ideals is that $\langle f_1, \ldots , f_s \rangle$ is radical. 
We will rely on the following Gr\"obner basis result to 
certify that an ideal is radical. For the basics of Gr\"obner basis theory, 
see \cite{CLO15}.

\begin{proposition}\label{prop:radical}
Let $<$ be a monomial order on the polynomial ring $R[\mathbf{x}] = R[x_1, \ldots , x_k]$ where 
$R$ is an integral domain. 
Suppose $g_1, \ldots , g_s \in R[\mathbf{x}]$ form a Gr\"{o}bner basis with respect to 
$<$, and their leading terms $in_< (g_1), \ldots , in_< (g_s)$ are squarefree monomials in the variables $x_1,\ldots , x_k$.
Then
\begin{enumerate}
    \item $\langle g_1, \ldots , g_s \rangle $ is radical, and
    \item when $R= \mathbb{R}$ is the field of real numbers, $\langle g_1, \ldots , g_s \rangle $ is real radical.
\end{enumerate}
\end{proposition}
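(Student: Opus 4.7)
The plan is a standard argument from combinatorial commutative algebra: reduce modulo the Gröbner basis and then exploit that squarefree monomial ideals are radical. The twist is two-fold: first, $R$ is merely an integral domain rather than a field, so I must check that division by the $g_i$ is well-defined; second, in part (2), I need a positivity argument that uses the order structure of $\mathbb{R}$.

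First, I would observe that the hypothesis ``leading terms are squarefree monomials'' implicitly says the leading coefficient of each $g_i$ equals $1$, so the division algorithm of $R[\mathbf{x}]$ by $\{g_1,\dots,g_s\}$ produces a remainder whose coefficients stay in $R$ and none of whose monomials is divisible by any $in_<(g_i)$. Because $\{g_i\}$ is a Gröbner basis, the initial ideal is $in_<(I)=\langle in_<(g_1),\dots, in_<(g_s)\rangle$, a squarefree monomial ideal. A quick check — this squarefree monomial ideal decomposes as an intersection $\bigcap_F\langle x_i\colon i\in F\rangle$ of ideals generated by variables; each such ideal is prime in $R[\mathbf{x}]$ (the quotient is a polynomial ring over the domain $R$), so $in_<(I)$ is radical; and when $R=\mathbb{R}$, each $\langle x_i\colon i\in F\rangle$ is also the real vanishing ideal of its real zero set (a real coordinate subspace), so $in_<(I)$ is even real radical.

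For part (1), suppose $f^n\in I$ and let $\bar f$ be the normal form of $f$ modulo $\{g_1,\dots,g_s\}$. Then $f\equiv \bar f\ (\mathrm{mod}\ I)$, hence $\bar f^{\,n}\in I$. Since $R[\mathbf{x}]$ is a domain, if $\bar f\neq 0$ then $\bar f^{\,n}\neq 0$ and $in_<(\bar f^{\,n})=in_<(\bar f)^n\in in_<(I)$. Radicality of $in_<(I)$ gives $in_<(\bar f)\in in_<(I)$, contradicting the normal-form property. Therefore $\bar f=0$ and $f\in I$.

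For part (2), suppose $f^{2m}+h_1^{2}+\dots+h_r^{2}\in I$ with $R=\mathbb{R}$, and replace each polynomial by its normal form to obtain $\bar f^{\,2m}+\bar h_1^{\,2}+\dots+\bar h_r^{\,2}\in I$ (using $g^2\equiv \bar g^{\,2}\pmod{I}$, which holds because $g-\bar g\in I$). Here is the key positivity step: the leading coefficient of $\bar f^{\,2m}$ is $(\mathrm{lc}(\bar f))^{2m}>0$ and of $\bar h_i^{\,2}$ is $\mathrm{lc}(\bar h_i)^{2}>0$, so none of these leading terms can cancel in the sum. Consequently, if not all of $\bar f,\bar h_1,\dots,\bar h_r$ are zero, then the leading term of the sum is the maximum of $in_<(\bar f)^{2m}$ and the $in_<(\bar h_i)^{2}$, and this maximum lies in $in_<(I)$. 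Radicality of $in_<(I)$ then forces $in_<(\bar f)\in in_<(I)$ or some $in_<(\bar h_i)\in in_<(I)$, contradicting the normal-form conditions. Hence $\bar f=0$ and $f\in I$, proving that $I$ is real radical.

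The only real obstacle is the positivity/no-cancellation step in part (2); everything else is the standard Gröbner–basis reduction, and the assumption that $R$ is a domain is exactly what is needed to make $R[\mathbf{x}]$ a domain and the variable-generated ideals $\langle x_i\colon i\in F\rangle$ prime.
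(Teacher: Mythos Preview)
Your proof is correct, and for part~(1) it is essentially the paper's own argument: both show that $in_<(I)$ is radical by decomposing it as an intersection of variable ideals $\langle x_i : i\in F\rangle$ (prime over a domain $R$), and then use $in_<(f^n)=in_<(f)^n$ together with radicality of $in_<(I)$ to force $f\in I$. The only cosmetic difference is that the paper iteratively subtracts elements $f_0\in I$ with $in_<(f_0)=in_<(f)$ until nothing remains, whereas you pass to the normal form $\bar f$ in one step; these are the same computation.

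For part~(2) the paper gives no argument at all---it simply cites~\cite[Proposition~1.2]{Vin12}---so your positivity/no-cancellation argument is genuinely additional content. It is correct: the key point is that the leading coefficients of $\bar f^{\,2m}$ and each $\bar h_i^{\,2}$ are strictly positive reals, so even if several of these share the same maximal leading monomial their contributions add rather than cancel, forcing that maximal monomial into $in_<(I)$ and hence (by radicality) forcing one of $in_<(\bar f), in_<(\bar h_i)$ into $in_<(I)$, a contradiction.
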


For a proof of the first statement, see~\Cref{sec:proofs appendix}, and for the second statement, see 
~\cite[Proposition 1.2]{Vin12}.

Gr\"obner bases satisfying the conditions of \Cref{prop:radical} 
establish much more than the radicality of the ideal they generate.
For particular monomial orders, they serve the extra purpose of deducing results about specializations and 
eliminations of these ideals.
They also imply a nice bonus for ideals defined over $\RR$, namely, 
~\Cref{prop:radical} (2) establishes that the ideal generated by the Gr\"obner basis is 
\emph{real radical}. 
In other words, this ideal is also the vanishing ideal of all real points in 
its variety.

In our applications of \Cref{prop:radical}, $R$ is almost always a field, where many standard results about Gr\"{o}bner bases~\cite[Ch 2]{CLO15} are at our disposal.
The sole exception is in the proof of~\Cref{thm:saturation}, where $R$ is obtained by localizing a polynomial ring at the powers of certain polynomials 
in $\CC [\bA].$

As a reminder, a main contribution of this paper is to explicitly describe the following four vanishing ideals and their corresponding varieties (see~\Cref{def:mother-ideal},~\eqref{eq:GammaAqp}--~\eqref{eq:GammaAbarp}): 
\[
\Aqp^{m,n} = \ideal\left(\GammaAqp^{m,n}\right), 
\quad 
\Ap^{m,n} = \ideal\left(\GammaAp^{m,n}\right) , 
\quad 
\Abarqp^{m,n} = \ideal\left(\GammaAbarqp^{m,n}\right), 
\quad 
\Abarp^{m,n} = \ideal\left(\GammaAbarp^{m,n}\right).
\] 
These ideals live in the polynomial rings $\CC[\bA, \qq, \pp], \CC[\bA,\pp], 
\CC[\qq,\pp]$ and $\CC[\pp]$ respectively.

Throughout the paper, we employ the following simple Nullstellensatz-based \emph{recognition criterion} 
to determine vanishing ideals.
\begin{proposition}[Recognition Criterion]
\label{prop:prop-recognition}
Given a closed subvariety $X\subset \PP^{n_1} \times \cdots \times \PP^{n_k}$, 
its vanishing ideal $\ideal(X)$ is generated by the $\ZZ^k$-homogeneous polynomials $f_1,\ldots , f_s \in \CC[\xx_1, \ldots , \xx_k]$ if and only if the following three conditions hold:
\begin{enumerate}
\item $f_1, \ldots , f_s$ cut out $X$ set-theoretically, i.e., $\V (f_1, \ldots ,f_s) = X.$
\item $\langle f_1, \ldots , f_s \rangle $ is saturated with respect to the irrelevant ideal:
\[
\langle f_1 , \ldots , f_s \rangle = \langle f_1, \ldots , f_s \rangle : (\frakm_{\xx_1} \cap \cdots \cap \frakm_{\xx_k})^\infty 
\]
where $\frakm_{\xx_i} = \langle x_{i1} , \ldots , x_{i(n_i+1)} \rangle .$
\item $\langle f_1, \ldots , f_s \rangle $ is radical.
\end{enumerate}
\end{proposition}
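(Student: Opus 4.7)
The plan is to verify the two directions separately, with the heart of the argument being the reverse direction where we need a multigraded projective Nullstellensatz.

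First I would dispatch the forward direction. Assume $\langle f_1, \ldots, f_s\rangle = \ideal(X)$. Since $X$ is closed, $\V(f_1,\ldots,f_s) = \V(\ideal(X)) = X$, giving (1). For (3), any vanishing ideal is an intersection of maximal ideals corresponding to points of $X$, hence radical. For (2), suppose $g \in \ideal(X) : (\frakm_{\xx_1} \cap \cdots \cap \frakm_{\xx_k})^\infty$. Then there exists $N$ with $g \cdot h \in \ideal(X)$ for every $h$ in the $N$-th power of the irrelevant ideal. At any point $x = (x^{(1)},\ldots,x^{(k)}) \in X$, some coordinate $x_{i j_i}^{(i)} \ne 0$ in each factor, so the monomial $\prod_i x_{i j_i}^{N}$ lies in the irrelevant ideal to the appropriate power but is nonzero at $x$, forcing $g(x) = 0$. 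Hence $g \in \ideal(X)$, and (2) holds.

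For the reverse direction, assume (1)--(3) and set $J = \langle f_1,\ldots,f_s\rangle$. The goal is to show $J = \ideal(X)$. By (1) and the definition of vanishing ideal, $J \subseteq \ideal(\V(J)) = \ideal(X)$. For the opposite inclusion I would invoke the multigraded analogue of the projective Nullstellensatz:
\[
\ideal(\V(J)) = \sqrt{J : (\frakm_{\xx_1} \cap \cdots \cap \frakm_{\xx_k})^\infty}.
\]
Granting this, hypothesis (2) collapses the saturation and hypothesis (3) removes the radical, yielding $\ideal(X) = \ideal(\V(J)) = J$.

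It remains to justify the displayed multigraded Nullstellensatz, and this is the step I expect to require the most care. I would prove it by passing to affine cones: the variety $\V(J) \subset \PP^{n_1} \times \cdots \times \PP^{n_k}$ corresponds to the multigraded ideal $J$, and points of $\PP^{n_1} \times \cdots \times \PP^{n_k}$ are in bijection with orbits in $\prod_i (\CC^{n_i+1} \setminus \{0\})$ under the $(\CC^\times)^k$ action. A multihomogeneous $g$ vanishes on $\V(J)$ in the product of projective spaces if and only if it vanishes on the preimage, which is the set of affine points where at least one coordinate in each group is nonzero, i.e. the complement of $\V(\frakm_{\xx_1} \cap \cdots \cap \frakm_{\xx_k})$ in $\V_a(J) \subset \CC^{n_1+1} \times \cdots \times \CC^{n_k+1}$. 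Applying the ordinary affine Hilbert Nullstellensatz to $\V_a(J)$ and then removing the locus where the irrelevant ideal vanishes gives exactly the saturation by $(\frakm_{\xx_1} \cap \cdots \cap \frakm_{\xx_k})^\infty$, and the radical on the outside is the standard affine Nullstellensatz. Equivalently, one can Segre-embed into a single projective space and apply the classical projective Nullstellensatz, checking that saturation by the Segre irrelevant ideal pulls back to saturation by $\frakm_{\xx_1} \cap \cdots \cap \frakm_{\xx_k}$ at the level of multigraded components.
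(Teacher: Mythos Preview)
Your proposal is correct and follows essentially the same route as the paper: both directions are handled the same way, and for the reverse implication you pass to affine cones and invoke the ordinary affine Nullstellensatz, which is exactly what the paper does. The only cosmetic difference is that you isolate the identity $\ideal(\V(J)) = \sqrt{J : (\frakm_{\xx_1}\cap\cdots\cap\frakm_{\xx_k})^\infty}$ as a named intermediate step and then apply (2) and (3), whereas the paper computes directly that $\langle f_1,\ldots,f_s\rangle$ is the vanishing ideal of $\cl\bigl(\Vaff(f_1,\ldots,f_s)\setminus \Vaff(\frakm_{\xx_1}\cap\cdots\cap\frakm_{\xx_k})\bigr)$ and identifies this set with $\hat{X}$; the underlying argument is the same.
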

This recognition criterion is a standard consequence of Hilbert's Nullstellensatz, but we 
provide a proof in \Cref{sec:proofs appendix} for completeness. In all of our results, we establish Condition 3 in~\Cref{prop:prop-recognition} 
via explicit Gr\"{o}bner bases as in \Cref{prop:radical}. 

If $M$ is a $m\times n$ matrix over a polynomial ring, then for $\ell \le \min (m, n)$ we let $\minors (\ell,M)$ denote the ideal generated by all $\ell \times \ell$ minors of $M$. We will consider matrices $M$ in which all entries 
are polynomials, some of which may be variables or scalars. 
Here is a small example illustrating some of the above tools. 

\begin{example}{\cite[Example 1.2]{Cox08}}\label{ex:sec-2}
Let $f_1, f_2, f_3$ be the $2\times 2$ minors of the matrix \[
M = \left( \BM a_1 & a_2 & a_3\\ b_1^2 & b_1 b_2 & b_2^2  \EM \right)
\]
in the polynomial ring $\CC [a_1, a_2, a_3, b_1, b_2].$
Explicitly,
$$
f_1 =b_1 (a_1 b_2 - a_2 b_1), \quad 
f_2 = a_1 b_2^2 - a_3 b_1^2, \quad 
f_3 = b_2 ( a_2 b_2 - a_3 b_1).    
$$
The ideal $J = \langle f_1,f_2, f_3 \rangle = \minors(2,M)$ is homogeneous with respect to the $\ZZ^2$-grading $\deg (a_i) = e_1,$ $\deg (b_i)= e_2.$ However it is not the vanishing ideal of 
$\V(f_1,f_2,f_3)$. Indeed, it does not satisfy Condition 2 in~\Cref{prop:prop-recognition}; for instance the polynomial $a_1 b_2 - a_2 b_1$ is an element of $J: (\langle a_1, a_2, a_3 \rangle \cap \langle b_1, b_2 \rangle)^\infty $, but not $J$.
Condition 3 also fails; $f = b_2 (a_2^2 - a_1 a_3) \notin J$,
while $f^3 \in J$.

The vanishing ideal of $\V (J) \subset \PP^2 \times \PP^1$ turns out to be 
\[I = \langle a_3 b_1 - a_2 b_2, a_1 b_2 - a_2 b_1, a_1 a_3 - a_2^2\rangle .\]
The three generators of $I$ form a Gr\"{o}bner basis for both  Lex and GRevLex orders 
in the variable ordering $b_2 < b_1 < a_3 < a_2 < a_1.$
The GRevLex initial ideal $\langle a_3 b_1 , a_2 b_1, a_2^2 \rangle $ is not squarefree.
However, the Lex initial ideal $\langle a_2 b_2, a_1 b_2, a_1 a_3\rangle $ is squarefree, so~\Cref{prop:radical} lets us verify that $I$ is radical.
\end{example}

To prove that Condition 2 in~\Cref{prop:prop-recognition} holds, we make frequent use of the following lemma about GRevLex Gr\"{o}bner bases.

\begin{lemma}[{\cite[Lemma 12.1]{Stu96}}]
\label{lem:grevlex}
Let $G$ be a Gr\"{o}bner basis for an ideal $I\subset \CC [x_1, \ldots , x_l]$
with respect to the GRevLex order where $x_l$ is the cheapest variable:
\[
x_l < \cdots < x_1.
\]
Then
\[
G' = \{ g \in G\suchthat x_l \text{ does not divide } g \} \cup \{ (g/x_l) \suchthat g \in G \textup{ and }  x_l \text{ divides } g\} 
\]
is a GRevLex Gr\"{o}bner basis for the ideal quotient $I: x_l .$ 
\end{lemma}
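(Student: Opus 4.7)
The plan is to verify the two standard conditions that make $G'$ a Gröbner basis of $I:x_l$: every element of $G'$ lies in $I:x_l$, and the initial terms of $G'$ generate $\mathrm{in}_<(I:x_l)$. Both the statement and its proof are cleanest under the assumption that $I$ is homogeneous with respect to a grading compatible with the GRevLex order, which is automatic in the applications in this paper since all ideals are multihomogeneous with respect to a product of projective spaces.

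The crux is a compatibility observation between GRevLex with $x_l$ cheapest and divisibility by $x_l$: for a homogeneous polynomial $g$, one has $x_l \mid \mathrm{in}_<(g)$ if and only if $x_l \mid g$ as a polynomial. Indeed, among monomials of a fixed total degree, GRevLex with $x_l$ cheapest ranks them by \emph{decreasing} $x_l$-exponent, so the leading monomial of $g$ is the one with the smallest $x_l$-power among all monomials of $g$; if that minimum is positive, every monomial of $g$ is divisible by $x_l$. As a bonus, this guarantees that $g/x_l$ is a bona fide polynomial wherever it appears in the definition of $G'$.

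Granted this, containment $G' \subseteq I:x_l$ is immediate: each $g \in G$ with $x_l \nmid g$ already lies in $I \subseteq I:x_l$, and each $g/x_l$ with $x_l \mid g$ satisfies $x_l \cdot (g/x_l) = g \in I$. For the Gröbner basis property, I would take $f \in I:x_l$, so that $x_l f \in I$ and therefore $\mathrm{in}_<(x_l f) = x_l \cdot \mathrm{in}_<(f)$ is divisible by $\mathrm{in}_<(g)$ for some $g \in G$. If $x_l \nmid \mathrm{in}_<(g)$, primality of $x_l$ gives $\mathrm{in}_<(g) \mid \mathrm{in}_<(f)$, and the compatibility observation yields $x_l \nmid g$, so $g \in G'$. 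Otherwise $x_l \mid \mathrm{in}_<(g)$, whence $x_l \mid g$ and $g/x_l \in G'$; dividing the divisibility relation through by $x_l$ produces $\mathrm{in}_<(g/x_l) = \mathrm{in}_<(g)/x_l \mid \mathrm{in}_<(f)$. Either way, some initial term of an element of $G'$ divides $\mathrm{in}_<(f)$, which establishes both the Gröbner basis property and, as a consequence, the equality $\langle G' \rangle = I:x_l$.

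The only real obstacle is the compatibility observation itself; once it is isolated, everything else is bookkeeping with initial ideals. The hidden assumption it rests on (homogeneity) is essential: without it, one can exhibit $g \in G$ for which $x_l \mid \mathrm{in}_<(g)$ but $x_l \nmid g$, so that $g/x_l$ is not even a polynomial and the statement breaks. Verifying that each application of the lemma in this paper is indeed in a homogeneous setting is therefore an implicit but routine step.
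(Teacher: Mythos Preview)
Your argument is correct and matches the standard proof from \cite{Stu96} that the paper cites (and mirrors in its proof of the analogous Lemma~\ref{lem:colon-wellsupported}): take $f\in I:x_l$, find $g\in G$ with $in_<(g)\mid x_l\,in_<(f)$, and split into cases according to whether $x_l$ divides $g$ (equivalently $in_<(g)$), using the GRevLex property that the leading term of a homogeneous polynomial carries the minimal $x_l$-exponent. Your explicit isolation of the homogeneity hypothesis needed for the compatibility observation is a useful clarification, since the lemma as stated omits it but every application in the paper is to multihomogeneous ideals.
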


\subsection{Generic Cameras}\label{sec:genericity}
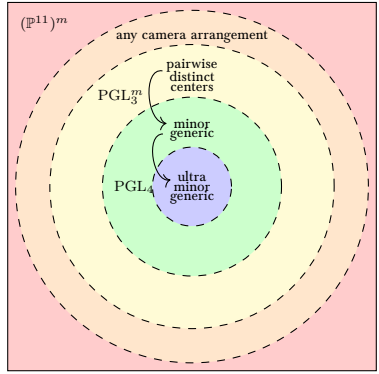
\begin{figure}[t]
\begin{center}
\def\figscale{0.7}
\begin{tikzpicture}[scale=\figscale, every node/.style={scale=\figscale}]
\coordinate (O) at (0,0);
\draw[fill=red!20] (-3.5,-3.5) rectangle ++ (7,7);
\draw[fill=orange!20,dashed] (O) circle (3.35);
\draw[fill=yellow!20,dashed] (O) circle (2.7);
\draw[fill=green!20,dashed] (O) circle (1.7);
\draw[fill=blue!20,dashed] (O) circle (.75);
\node (n1) at (0,.2) {\scriptsize ultra};
\node (n12) at (0, 0) {\scriptsize minor};
\node (n13) at (0, -.2) {\scriptsize generic};
\node (n2) at (0,1.2) {\scriptsize minor};
\node (n21) at (0,1.0) {\scriptsize generic};
\node (n3) at (0,2.3) {\scriptsize pairwise};
\node (n31) at (0,2.1) {\scriptsize distinct};
\node (n32) at (0,1.9) {\scriptsize centers};
\node (n4) at (0,2.85) {\scriptsize any camera arrangement};
\node (n5) at (-2.8,3.1) {\scriptsize $(\PP^{11})^m$};

\node (h2) at (-1,-1) {};
\node (h3) at (-1,-1) {};
\draw[->] (n21) to [in=190,out=180] (n1) node [align=left,midway, label=left:{\scriptsize $\PGL_4 \, \, \, \, \quad $}] {};
\draw [->] (n31) to [in=180,out=170] (n2);
\node (e) at (-1.3,1.7) {\scriptsize $\PGL_3^m \, \, $};
\end{tikzpicture}
\end{center}
\caption{Classes of camera arrangements related by inclusion and equivalence up to projective change-of-coordinates.}\label{fig:camera-space}
\end{figure}

Broadly speaking, the main results of this paper may be divided into three progressively stronger categories: \emph{geometric}  results  giving set-theoretic equations for the varieties of interest, \emph{ideal-theoretic} results which characterize the vanishing ideals of these varieties, and, strongest of all, results about the \emph{Gr\"{o}bner bases} of these vanishing ideals.
For the geometric and ideal-theoretic results, our genericity hypotheses will be that the camera arrangement $\bar{\bA}$ has pairwise distinct centers. 
For the Gr\"obner basis results we will need stronger  
notions of genericity which we discuss in this section.

\begin{definition}[Minor Generic]\label{def:minor-generic}
A camera arrangement $\bar{\bA}=(\bar{A}_1, \ldots , \bar{A}_m)$ is  \emph{minor generic} if all $4\times 4$ minors of the $4 \times 3m$ matrix $\stacked{\bar{A}_1}{\bar{A}_m}$ are nonzero (cf.~\cite[Sec.~2]{AST13},~\cite[Sec.~3]{APT21}.)
\end{definition}
\begin{definition}[Ultra Minor Generic]
\label{def:ultra-minor-generic}
A camera arrangement $\bar{\bA}=(\bar{A}_1,\ldots \bar{A}_m)$ is \emph{ultra minor generic} if all $k\times k$ minors of the $4 \times 3m$ matrix $\stacked{\bar{A}_1}{\bar{A}_m}$ are nonzero for any $k \in [4].$
\end{definition}

We now have three notions of genericity for camera arrangements: pairwise distinct centers, minor generic, and ultra minor generic. They are progressively stronger in 
the sense established by the following proposition, whose proof may be found in~\Cref{sec:proofs appendix}.

\begin{proposition}
If a camera arrangement $\bar{\bA} = (\bar{A}_1,\ldots \bar{A}_m)$ is ultra minor generic 
then it is minor generic, and if $\bar{\bA}$ is minor generic, then it has pairwise distinct centers.
\end{proposition}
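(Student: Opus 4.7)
The proof splits into two implications, both quite direct once unpacked.

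For the first implication (ultra minor generic $\Rightarrow$ minor generic), the plan is just to observe that ultra minor genericity asks for nonvanishing of all $k \times k$ minors with $k \in [4] = \{1,2,3,4\}$, which in particular includes $k=4$. This is exactly the defining condition of minor generic, so no work is required beyond unwinding definitions.

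For the second implication (minor generic $\Rightarrow$ pairwise distinct centers), I would argue by contrapositive. Suppose two cameras $\bar{A}_i, \bar{A}_j$ with $i \ne j$ share a center, that is, there is a nonzero $c \in \CC^4$ with $\bar{A}_i c = 0$ and $\bar{A}_j c = 0$. Then every row of $\bar{A}_i$ and every row of $\bar{A}_j$, viewed as a vector in $\CC^4$, is orthogonal to $c$, so all six such rows lie in a common $3$-dimensional subspace $c^\perp \subset \CC^4$. In the $4 \times 3m$ matrix $\stacked{\bar{A}_1}{\bar{A}_m}$, the six columns coming from the blocks $\bar{A}_i^\top$ and $\bar{A}_j^\top$ are exactly these rows, so they span at most a $3$-dimensional subspace of $\CC^4$.

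Consequently any choice of $4$ columns drawn from these $6$ is linearly dependent, and hence the corresponding $4 \times 4$ minor of $\stacked{\bar{A}_1}{\bar{A}_m}$ vanishes. This contradicts minor genericity, finishing the contrapositive. I do not expect any real obstacle here; the only mild subtlety is keeping the transpose conventions in the \stacked{}{} macro straight so that ``rows of $\bar{A}_i$'' correspond to ``columns of the stacked matrix,'' which is what allows the dimension count on $c^\perp$ to translate directly into vanishing of a $4 \times 4$ minor.
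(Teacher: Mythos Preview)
Your proposal is correct and essentially identical to the paper's proof. For the first implication both you and the paper simply unwind the definitions; for the second, the paper argues directly that minor genericity forces the $4\times 6$ block $\left(\bar{A}_i^\top \mid \bar{A}_j^\top\right)$ to have rank $4$ and hence trivial left kernel, which is precisely the contrapositive of your observation that a common center places all six columns in the $3$-dimensional space $c^\perp$.
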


We next show that a converse statement of sorts is possible under suitable group actions. This is made precise in \Cref{thm:orbits} and 
summarized pictorially in ~\Cref{fig:camera-space}. 

Consider the following group actions on a camera arrangement $\bar{\bA} = (\bar{A}_1, \ldots \bar{A}_m)$: 
\begin{align}\label{eq:small group-action}
\begin{split}
(\PGL_3)^m &\times (\PP^{11})^m  \to (\PP^{11})^m\\
(H_1, \ldots , H_m) &\, \bar{\bA} = (H_1 \bar{A}_1, \cdots , H_m \bar{A}_m), 
\end{split}
\end{align}
\begin{align}\label{eq:PGL4 group-action}
\begin{split}
\PGL_4 &\times (\PP^{11})^m  \to (\PP^{11})^m\\
H &\, \bar{\bA} = (\bar{A}_1 H^{-1}, \cdots , \bar{A}_m H^{-1}), 
\end{split}
\end{align}
and their combined action:
\begin{align}\label{eq:big group-action}
\begin{split}
\PGL_4 \times (\PGL_3)^m &\times (\PP^{11})^m  \to (\PP^{11})^m\\
\left(H, (H_1, \ldots , H_m)\right) &\, \bar{\bA} = (H_1 \bar{A}_1 H^{-1}, \cdots , H_m \bar{A}_m H^{-1}). 
\end{split}
\end{align}

The combined group action may be interpreted as a simultaneous change of coordinates in both the world $\PP^3$ and in the set of images $(\PP^2)^m$.
We will say that two camera arrangements are equivalent if they lie in the same orbit under one of these group actions.

\begin{theorem} \label{thm:orbits}
\begin{enumerate}
    \item A camera arrangement $\bar{\bA}$ has pairwise distinct centers if and only if it is equivalent to a minor generic camera arrangement under the group action  in~\eqref{eq:small group-action}.
    \item A camera arrangement $\bar{\bA}$ is minor generic if and only if it is equivalent to an ultra minor generic arrangement under the group action in~\eqref{eq:PGL4  group-action}.
    \item A camera arrangement $\bar{\bA}$ has pairwise distinct centers if and only if it is equivalent to an ultra minor generic camera arrangement under the group action in~\eqref{eq:big group-action}. 
\end{enumerate}
\end{theorem}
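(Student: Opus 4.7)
The plan is to deduce (3) immediately by chaining (1) and (2): given $\bar{\bA}$ with pairwise distinct centers, first apply (1) to obtain an equivalent minor generic arrangement via $(\PGL_3)^m$, then apply (2) to that arrangement to obtain an equivalent ultra minor generic arrangement via $\PGL_4$. The converse direction of (3) follows from the preceding proposition together with the observation that both component group actions preserve ``pairwise distinct centers.'' I will therefore focus on the two biconditionals (1) and (2).

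\textbf{Easy directions.} The backward direction of (1) follows from the preceding proposition, once one notes that left-multiplication by $H_i \in \PGL_3$ does not alter the camera center $\ker \bar{A}_i$. For the backward direction of (2), the $\PGL_4$ action rescales each $4 \times 4$ minor of $\stacked{\bar{A}_1}{\bar{A}_m}$ by the nonzero constant $\det(H)^{-1}$, so minor genericity is preserved; combined with the implication ultra minor generic $\Rightarrow$ minor generic from the preceding proposition, this direction is immediate.

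\textbf{Forward direction of (1).} As $H_i$ ranges over $\PGL_3$, the rows of $H_i \bar{A}_i$ range over all ordered bases of the row span $V_i \subset \CC^4$ of $\bar{A}_i$, while $V_i$ itself is invariant. Since the camera center $\ker \bar{A}_i$ is the annihilator of $V_i$, pairwise distinct centers is equivalent to pairwise distinct $V_i$. The claim then reduces to the following: given $m$ distinct $3$-dimensional subspaces $V_1, \ldots, V_m \subset \CC^4$, one can choose ordered bases so that the resulting $4 \times 3m$ matrix $M$ has all $4 \times 4$ minors nonzero. For each $4$-element column set $J$, nonvanishing of the corresponding minor is a Zariski-open condition on the choice of bases in $V_1^3 \times \cdots \times V_m^3$. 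Nonemptiness of each such open set is verified by cases on how the four chosen columns partition among cameras (the possibilities being $(3,1)$, $(2,2)$, $(2,1,1)$, and $(1,1,1,1)$); in each case the required linear independence of the four vectors is achievable because distinct $3$-dimensional subspaces of $\CC^4$ meet in a $2$-dimensional subspace and together span $\CC^4$. A finite intersection of nonempty Zariski-open subsets of the irreducible variety $V_1^3 \times \cdots \times V_m^3$ is nonempty, yielding the desired bases.

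\textbf{Forward direction of (2).} Let $M = \stacked{\bar{A}_1}{\bar{A}_m}$ and write $M' = H^{-\top} M$ for the transformed matrix. For row set $I$ and column set $J$ of size $k \in \{1,2,3,4\}$, the Cauchy--Binet expansion gives
\[
\det(M'_{I,J}) \;=\; \sum_{K \subset \{1,2,3,4\},\, |K|=k} \det\bigl((H^{-\top})_{I,K}\bigr) \cdot \det(M_{K,J}),
\]
a polynomial in the entries of $H^{-\top}$ that is identically zero only if every coefficient $\det(M_{K,J})$ vanishes, i.e., only if the columns of $M$ indexed by $J$ are linearly dependent. It therefore suffices to show that under minor genericity of $M$, any $k \le 4$ columns of $M$ are linearly independent. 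For $k = 4$ this is the hypothesis itself; for $k < 4$ and $m \ge 2$, any linear dependence among $k$ columns propagates to a vanishing $4 \times 4$ minor by extending the column set, contradicting the hypothesis; and for $m = 1$, $M$ is the transpose of a rank-$3$ camera, so all three of its columns are linearly independent. With linear independence in hand, each minor condition defines a nonempty Zariski-open subset of $\PGL_4$, and their finite intersection is nonempty.

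\textbf{Main obstacle.} The substantive content lies in the Zariski density arguments, and the more delicate of the two is the forward direction of (1). Its reduction to the subspace-basis formulation is immediate, but the case analysis by column-partition requires careful bookkeeping to certify that each required linear independence is genuinely achievable from distinctness of the $V_i$ alone. By contrast, Cauchy--Binet renders (2) essentially mechanical once one observes that minor genericity of $M$ precludes low-rank column submatrices.
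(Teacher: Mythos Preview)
Your proposal is correct and matches the paper's strategy closely: Cauchy--Binet plus Zariski density for part~(2), and chaining (1) and (2) for part~(3). The one substantive difference is that the paper simply cites \cite[Lemma~3.6]{APT21} for part~(1), whereas you supply a self-contained argument by reinterpreting the $(\PGL_3)^m$-orbit as the set of all ordered bases of the row spans $V_i$ and then verifying nonemptiness of each minor condition by a partition-type case analysis. Your reduction is sound; the only caveat is that the $(2,1,1)$ and $(1,1,1,1)$ cases need a touch more than the pairwise fact ``distinct hyperplanes in $\CC^4$ meet in a $2$-plane and together span'': one must check that the multilinear determinant map on $V_{i_1}\times\cdots\times V_{i_r}$ is not identically zero, which follows by fixing all but one argument generically and observing that the remaining hyperplane cannot be forced to lie in a proper subspace. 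You flag this in your ``Main obstacle'' paragraph, so the sketch is honest.
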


Unlike the condition of pairwise distinct centers, minor genericity and 
ultra minor genericity are algebraic conditions which are not preserved under coordinate change.  These algebraic genericity assumptions play a 
central role in deducing results about specializations of the bundle adjustment
ideals that we study in this paper.

\section{$\Ap^{m,1}$ : Constraints on cameras and image points for $n=1$}\label{sec:Ap}

In this section we study the ideal $\Ap^{m,1}$ 
consisting of all polynomials that constrain 
a tuple of $m$ symbolic cameras
and the images of a single world point in them.  
Given one camera $A$ and a point $p \in \PP^2,$ there is always a world point $q \in \PP^3$ such that $A q \sim p.$
Therefore, the ideal $\Ap^{1,1}$ is trivial, and we will assume throughout this section that we have $m\ge 2$ cameras.

Let $\bA = (A_1, \ldots, A_m)$ and $\pp = (p_1, \ldots, p_m)$ represent $m$ symbolic cameras and image points in them.

\begin{definition}\label{def:k-focal}
A $k$-{\bf focal} of the pair $(\bA,\pp)$ is any 
polynomial in $\CC [\bA, \pp ]$ that arises as a 
maximal minor of the $3k \times (4+k)$ matrix:
\begin{align}
  \label{eq:k-focal-matrix}
\left(\BM A_{\si_1} &p_{\si_1} &0&\dots &0\\A_{\si_2} &0&p_{\si_2}&\ddots &0 \\ \vdots & \vdots &\ddots&\ddots &\vdots \\A_{\si_k} &0&\dots&0&p_{\si_k}\EM \right), 
\end{align}
where $\sigma = \{ \sigma_1 , \ldots , \sigma_k \}$ is any $k$-element subset of $[m] = \{ 1, \ldots , m \}$ with $k\ge 2.$ 
\end{definition}
Since $4+k\le 3k$ for all $k \ge 2,$ a $k$-focal is specified by a choice of $4+k$ rows of the matrix~\eqref{eq:k-focal-matrix}.
Note that all $k$-focals of $(\bA,\pp)$ lie in the 
ideal $\Ap^{m,1}$ since if there exists a world point $q$ such that $A_i q \sim p_i$ 
for all $i \in [m]$, then the matrix~\eqref{eq:k-focal-matrix} is rank deficient and hence 
all maximal minors of~\eqref{eq:k-focal-matrix} are zero. 

We now state the main result of this section, which establishes the plausible, but by no means obvious, fact that the $k$-focals generate $\Ap^{m,1}.$
Moreover, a distinguished subset of these $k$-focals form a Gr\"{o}bner basis with respect to a large class of monomial orders.

\begin{theorem}
  \label{thm:Ap-GB}
 For any number of cameras $m$, and $n=1$ world point, the $\kfour$-focals form a Gr\"{o}bner basis for $\Ap^{m,1}$ with respect to any product order $<$ with $\bA < \pp .$
\end{theorem}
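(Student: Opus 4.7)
The plan is to apply the Recognition Criterion (\Cref{prop:prop-recognition}) and Buchberger's criterion in tandem, showing that $J := \langle \{k\text{-focals} : k = 2, 3, 4\}\rangle$ coincides with $\Ap^{m,1}$ and that the generating set is a Gr\"obner basis. The easy containment $J \subseteq \Ap^{m,1}$ is immediate: at any $(\bar{\bA}, \bar{\pp}) \in \GammaAp^{m,1}$ there is a world point $\bar{q}$ with $\bar{A}_i \bar{q} = \lambda_i \bar{p}_i$, so the vector $(\bar{q}^\top, -\lambda_{\si_1}, \ldots, -\lambda_{\si_k})^\top$ lies in the kernel of the matrix in~\eqref{eq:k-focal-matrix}, forcing every $(4+k) \times (4+k)$ minor to vanish.

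For the Gr\"obner basis property, the key observation is that under any product order with $\bA < \pp$, the leading term of a $k$-focal $f$ has the shape $\prod_{i=1}^{k} p_{\si_i}[r_i^\star] \cdot \mathrm{in}_<\bigl(\det \bA[\row^\star, :]\bigr)$. Indeed, Laplace-expanding the maximal minor of~\eqref{eq:k-focal-matrix} along the $k$ columns carrying $p$-entries, every term has $\pp$-degree exactly $k$ and is a monomial in $\pp$ multiplied by a signed $4\times 4$ minor of $\bA$, so the leading monomial of $f$ is squarefree. I would then verify Buchberger's criterion by showing every $S$-pair among the $k$-focals ($k \le 4$) reduces to zero modulo the generating set; the reductions should be driven by Pl\"ucker-type relations among $4 \times 4$ minors of $\bA$ lifted multilinearly across the $p$-columns, with the combinatorics of overlapping row selections in~\eqref{eq:k-focal-matrix} ensuring that no $S$-polynomial requires a focal of size $\ge 5$. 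Once Buchberger is verified, radicality of $J$ follows from~\Cref{prop:radical} applied to the squarefree initial ideal.

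To close out the Recognition Criterion, I would (a) verify the saturation condition by applying~\Cref{lem:grevlex} iteratively to the cheapest variables and checking that the $k$-focals survive in the successive quotients, and (b) prove the set-theoretic equality $\V(J) = \GammaAp^{m,1}$ by a rank-drop argument: vanishing of the $2,3,4$-focals at $(\bar{\bA}, \bar{\pp})$ should force the full $3m \times (4+m)$ matrix of~\eqref{eq:k-focal-matrix} (with $k = m$) to drop rank, producing the required $\bar{q}$, with a Zariski-closure argument handling degenerate $\bar{\bA}$ such as those with coincident camera centers.

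The central obstacle is Buchberger's criterion, together with the redundancy claim for $k \ge 5$. Intuitively this reflects that $\bA$ has only $4$ columns, so every rank-drop obstruction decomposes into obstructions involving at most four cameras at a time; turning this intuition into a clean $S$-polynomial reduction scheme that is uniform in $m$ and in the choice of product order with $\bA < \pp$ is the technical heart of the proof.
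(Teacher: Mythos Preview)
Your overall architecture---Recognition Criterion plus squarefree initial ideal for radicality---matches the paper, but the crucial step, proving that the $2,3,4$-focals form a Gr\"obner basis, is left as a plan rather than a proof. You propose to verify Buchberger's criterion directly via ``Pl\"ucker-type relations among $4\times 4$ minors of $\bA$'', and you yourself flag this as the central obstacle. That is a genuine gap: carrying out such an $S$-pair analysis uniformly in $m$ and across \emph{all} product orders with $\bA < \pp$ is far from routine, and nothing in your sketch indicates how the combinatorics would actually close up. A related issue is your plan to use \Cref{lem:grevlex} for the saturation step: that lemma is specific to GRevLex, whereas the theorem concerns arbitrary product orders with $\bA<\pp$, so it does not apply as stated.

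The paper sidesteps the direct $S$-pair computation entirely. Rather than attacking the $2,3,4$-focals head-on, it starts from the ideal of $m$-focals. The point is that the $3m\times(4+m)$ matrix in~\eqref{eq:k-focal-matrix} with $k=m$ has distinct indeterminates as its nonzero entries, so the $m$-focal ideal is a \emph{sparse determinantal ideal}, and Boocher's theorem (\Cref{prop:m-focal-UGB}) hands you a universal Gr\"obner basis for free. The paper then observes that every $k$-focal is \emph{well-supported} on $\pp_1,\ldots,\pp_m$ in the sense of~\Cref{def:well-supported}, and proves a product-order analogue of the GRevLex colon lemma (\Cref{lem:colon-wellsupported}, \Cref{cor:universal-product}): under these hypotheses, quotienting by the cheapest $\pp$-variable preserves the Gr\"obner property. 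Iterating over $p_1[j],\ldots,p_m[j]$ and invoking the bumping-down half of \Cref{prop:bump} identifies the resulting generators as exactly the $2,3,4$-focals. The same machinery handles the saturation condition in the Recognition Criterion. So the missing idea in your proposal is not a better bookkeeping of $S$-pairs, but the detour through the $m$-focal ideal and the well-supported colon lemma.
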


We will recall the notion of a product order before proving~\Cref{thm:Ap-GB}. 
First, we explain why the $\kfour$-focals are the only ``interesting" $k$-focals.

For the sake of more compact notation, we let
\begin{equation}\label{eq:k-focal-alternate}
\kfoc  = \left(\BM A_{\si_1}[\row_1, :] &p_{\si_1}[\row_1] &0&\dots &0\\A_{\si_2}[\row_2, :] &0&p_{\si_2}[\row_2]&\ddots &0 \\ \vdots & \vdots &\ddots&\ddots &\vdots \\A_{\si_k}[\row_k, :] &0&\dots&0&p_{\si_k}[\row_k]\EM \right), 
\end{equation} 
denote the $k$-focal submatrix of~\eqref{eq:k-focal-matrix} determined by $\sigma$ and the sets $\row_i \subseteq \{1,2,3\}$ indexing the chosen rows of $A_{\sigma_i}$ and $p_{\sigma_i}.$ 
Thus, every $k$-focal is the determinant of a matrix of the form in~\eqref{eq:k-focal-alternate}. 

The following ``bumping up/bumping down'' lemma establishes that all $k$-focals are elementary consequences of $2$-focals, $3$-focals, and $4$-focals.
For $\Abarp^{m,1},$ this observation is a classical result, dating back at least to~\cite[Sec.~5]{Tri95} (see also, eg.,~\cite[Proposition 2]{DBLP:conf/iccv/TragerHP15}, or~\cite[Lemma 2.2]{APT21}.)

For $2\le k-1< m,$ a $(k-1)$-focal $\det \kfoc $
can be ``bumped-up'' to a $k$-focal by including one row (say $j$th) from a new camera $A_i$ with $i \notin \sigma $ as follows:
\begin{equation}\label{eq:bumped-kfocal}
\det
\left(\BM
\kfoc & 0\\
A_i[j,:] & p_i[j]
\EM\right)
.
\end{equation}

There are $3(m-k)$ choices for the pair $(i,j).$
\begin{proposition}
\label{prop:bump}
\par (Bumping-up) 
A ``bumped-up'' $k$-focal~\eqref{eq:bumped-kfocal} can be factored as 
\begin{equation}\label{eq:bump}
\det
\left(\BM
\kfoc & 0\\
A_i[j,:] & p_i[j]
\EM\right)
=
p_i[j] \cdot \det \kfoc ,
  \end{equation}

\par and (Bumping-down) 
every nonzero $k$-focal for $k> 4$ has the form~\eqref{eq:bump} (up to sign.)
\end{proposition}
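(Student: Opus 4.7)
The plan is to prove both halves of Proposition \ref{prop:bump} via Laplace expansion along one of the ``image-point columns'' of the matrix in \eqref{eq:k-focal-alternate}, so that the content reduces to elementary linear algebra plus one pigeonhole argument.

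For \emph{bumping-up}, the last column of the matrix in \eqref{eq:bumped-kfocal} is zero except for the single entry $p_i[j]$ in its bottom-right corner. Cofactor expansion along this column immediately yields \eqref{eq:bump}; the sign is $+1$ because the only nonzero entry lies on the main diagonal of the augmented matrix.

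For \emph{bumping-down}, I would first observe that a $k$-focal $\det \kfoc$ is determined by subsets $\row_i \subseteq \{1,2,3\}$ satisfying $\sum_i |\row_i| = 4+k$. If some $|\row_i| = 0$, then the entire column of the submatrix corresponding to $p_{\sigma_i}$ is zero, so the determinant vanishes. Hence for a nonzero $k$-focal we may assume $1 \le |\row_i| \le 3$ for every $i \in \sigma$. Now the key pigeonhole step: if in addition every $|\row_i| \ge 2$, then $4+k = \sum_i |\row_i| \ge 2k$, forcing $k \le 4$. Consequently, whenever $k > 4$ there must exist some $i$ with $|\row_i| = 1$, say $\row_i = \{j\}$. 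Within the submatrix $\kfoc$, the column corresponding to $p_{\sigma_i}$ then has a unique nonzero entry $p_{\sigma_i}[j]$, located in the row containing $A_{\sigma_i}[j,:]$. Expanding the determinant along this column factors $\det \kfoc$ as $\pm p_{\sigma_i}[j]$ times the determinant of the matrix obtained by deleting that row and column. A direct inspection shows that this remaining minor has precisely the shape of a $(k-1)$-focal on the index set $\sigma \setminus \{\sigma_i\}$ with row choices $\row_l$ for $l \neq i$, so $\det \kfoc$ has the form displayed in \eqref{eq:bump} up to sign.

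The main obstacle here is essentially bookkeeping: one must verify that the minor obtained after the cofactor expansion really is another $(k-1)$-focal, with the correct camera-row block structure. The pigeonhole step itself is transparent, and it explains conceptually why exactly the $\kfour$-focals appear in Theorem~\ref{thm:Ap-GB}---the inequality $4+k \ge 2k$ fails precisely at $k = 5$, so $2$-, $3$-, and $4$-focals with row pattern $(3,3)$, $(3,2,2)$, $(2,2,2,2)$ respectively cannot be reduced further by this mechanism.
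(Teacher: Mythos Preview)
Your proof is correct and follows essentially the same approach as the paper: Laplace expansion along the image-point column for bumping-up, and a pigeonhole argument on the row counts $|\row_i|$ for bumping-down. The only cosmetic difference is that the paper phrases the pigeonhole via multihomogeneity in the camera variables (each $k$-focal has degree $|\row_i|-1$ in $A_{\sigma_i}$ with total camera degree $4$, so $\sum (|\row_i|-1)=4$), whereas you argue directly from $\sum |\row_i| = 4+k$ and $|\row_i|\ge 2 \Rightarrow k\le 4$; these are the same inequality, and your formulation is arguably the more transparent of the two.
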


\begin{proof}
\eqref{eq:bump} is easily verified by Laplace expansion of the determinant.
To prove bumping-down, first note that every term in any $k$-focal has degree $4$ in the camera variables since the camera variables occur in exactly four columns of the matrix \eqref{eq:k-focal-matrix}. Furthermore, this $k$-focal is homogeneous of degree $(\# \row_i -1)$ in the variables of the camera $A_i$. To see this, scale the $k$ cameras by 
$c_1,\ldots ,c_k\in \CC$ and use multi-linearity of $\det $ in rows and columns to obtain
\begin{align*}
\det \big( (c_1,\ldots , c_k) \cdot \bA \, \big| \, \pp \big) [\row ]_\sigma &= 
\left(\displaystyle\prod_{i=1}^{k} c_i^{\# \row_i} \right) \cdot \det \big( \bA \, \big| \, (c_1^{-1}, \cdots , c_k^{-1}) \cdot \pp \big) [\row ]_\sigma\\
&= \left(\displaystyle\prod_{i=1}^{k} c_i^{\# \row_i-1} \right) \cdot \det \kfoc .
\end{align*}

Since $4=\sum_{i=1}^{k} (\# \row_i -1),$ and $\# \row_i -1 \geq 0$ for all $i$, 
at most four cameras can use more than one row.
Thus, if $k > 4,$ there exists a camera $A_i$ using a single row indexed by 
$\row_i = \{ j \},$ which implies the factorization in~\eqref{eq:bump}.
\end{proof}

As explained in~\cite[Ch.~17]{HZ04}, the $\kfour$-focals encode many familiar entities from multiview geometry. A $2$-focal has the form
\[
\left( \bA \, |\, \pp \right) [\{1 , 2, 3 \}, \{ 1, 2, 3 \} ]_{\{ i, j \}} = \det
\left(\BM {A}_{i} &p_{i} &0\\
{A}_{j} &0&p_{j}\EM\right)
  = p_j^\top \, F_{i j} ({A}) \, p_i,
\]
where $F_{ij} (A)$ is the $3\times 3$ \emph{fundamental matrix} associated to the pair $({A}_i, \, {A}_j).$
Similarly, for $\{ j_1, j_2 \}, \{ k_1, k_2 \} \subset \{1,2,3\},$ the $3$-focal
\begin{align*}
&\det \left( \bA \, | \, \pp \right) [\{ 1, 2, 3 \}, \{j_1, j_2\}, \{ k_1, k_2 \} ]_{\{ i, j, k \}} \\
&=\det \left(\begin{matrix*}[r] {A}_{i} &p_{i} &0 & 0 \\
{A}_j[ \{j_1,j_2\}, :]
&0&p_j[ \{ j_1, j_2\},:]&0\\ {A}_k[ \{k_1,k_2\}, :] &0&0&p_k [ \{k_1, k_2\},:]\\ \end{matrix*}\right)
\end{align*}
may be viewed as a polynomial in $\CC [\pp]$ whose coefficients in $\CC[\bA]$ make up a particular \emph{trifocal tensor} for the triple $({A}_i, \, {A}_j, \, {A}_k).$
Note in the equation above the privileged index $i$, also known as the covariant index; for each triple $(A_i, A_j, A_k),$ there are $27$ choices of $3$-focals where the covariant index may be $i, j,$ or $k.$
Finally, each $4$-tuple $({A}_i, \, {A}_j, \, {A}_k, \, {A}_l)$ gives rise to $81$ $4$-focals
\begin{align*}
&\det \left( \bA \, | \, \pp \right) [\{ i_1, i_2 \}, \{ j_1, j_2 \}, \{ k_1, k_2 \} \{ l_1, l_2 \} ]_{\{ i, j, k, l \}}\\
&= \det \left(\begin{matrix*}[r] {A}_{i} [ \{i_1,i_2\}, :] &p_{i} [ \{ i_1, i_2 \} , : ] &0 & 0 & 0\\
{A}_j[ \{j_1,j_2\}, :]
&0&p_j[ \{ j_1, j_2\}, :]&0 & 0\\ {A}_k[ \{k_1,k_2\}, :] &0&0&p_k [ \{k_1, k_2\}, :] &0\\ 
{A}_l[ \{l_1,l_2\}, :] &0&0&0 &p_l [ \{l_1, l_2\}, :]
\end{matrix*}\right),
\end{align*}
whose coefficients in $\CC[\bA]$ form the $81$ \emph{quadrifocal tensors.}

We will use \Cref{prop:prop-recognition} to 
prove~\Cref{thm:Ap-GB}. This in turn will rely on \Cref{prop:radical}. 
We begin by proving that both the $m$-focals and the $\kfour$-focals cut out $\GammaAp^{m,1}$ \emph{set-theoretically.}
The latter will establish Condition 1 of the recognition criterion.

\begin{proposition}
\label{prop:Ap-set}
\[ \GammaAp^{m,1}  =  \V (\kfour\text{-focals}) =  \V (m\text{-focals}).
\]
\end{proposition}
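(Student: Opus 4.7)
The plan is to establish the chain of inclusions
\[
\GammaAp^{m,1} \;\subseteq\; \V(\kfour\text{-focals}) \;\subseteq\; \V(m\text{-focals}) \;\subseteq\; \GammaAp^{m,1},
\]
which yields both asserted equalities. For the first inclusion, I would work on the dense open subset of $\GammaAqp^{m,1}$ where each imaging relation $\bar A_i \bar q \sim \bar p_i$ holds with a nonzero proportionality constant $\lambda_i$; then $(\bar q, -\lambda_1, \ldots, -\lambda_m)$ sits in the kernel of the stacked matrix in~\eqref{eq:k-focal-matrix} with $k=m$. Every $k$-focal submatrix in~\eqref{eq:k-focal-alternate} inherits rank deficiency, so its maximal minor vanishes, and because these minors do not involve $q$ they vanish on the projection $\GammaAp^{m,1}$ as well; in particular, the 2-, 3-, and 4-focals do.

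The second inclusion is immediate from the bumping-down direction of \Cref{prop:bump}: every $m$-focal with $m>4$ factors as a product of $m-4$ coordinate entries of $\pp$ with a 4-focal, so $m$-focals lie in the ideal generated by 4-focals, forcing $\V(\text{4-focals}) \subseteq \V(m\text{-focals})$.

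The heart of the argument, and the step I expect to be the main obstacle, is the third inclusion $\V(m\text{-focals}) \subseteq \GammaAp^{m,1}$. Given $(\bar{\bA}, \bar{\pp}) \in \V(m\text{-focals})$, the stacked matrix has rank at most $m+3$, so its kernel contains a nonzero vector $(q, -\lambda_1, \ldots, -\lambda_m)$ satisfying $\bar A_i q = \lambda_i \bar p_i$ for every $i$. Because each $\bar p_i$ is a nonzero representative of a point in $\PP^2$, the assumption $q = 0$ forces every $\lambda_i = 0$, contradicting nonzeroness of the kernel vector; hence $q$ determines an honest point of $\PP^3$. When every $\lambda_i \neq 0$, the relation $\bar A_i q \sim \bar p_i$ holds on the nose, placing $(\bar{\bA}, q, \bar{\pp}) \in \GammaAqp^{m,1}$ and $(\bar{\bA}, \bar{\pp}) \in \GammaAp^{m,1}$ directly.

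The remaining case, where $\lambda_i = 0$ for some $i$ (so $q$ lies in the kernel of $\bar A_i$ and $\bar A_i q \not\sim \bar p_i$), I would resolve by an explicit one-parameter deformation of the offending cameras. Fix any $v \in \CC^4$ with $v^\top q \neq 0$; for $i \in S := \{i : \lambda_i = 0\}$ set $\bar A_i(t) := \bar A_i + t\,\bar p_i v^\top$, and for $i \notin S$ set $\bar A_i(t) := \bar A_i$. A direct check gives $\bar A_i(t) q = t(v^\top q)\bar p_i \sim \bar p_i$ for $i \in S$ and $\bar A_i(t) q = \lambda_i \bar p_i \sim \bar p_i$ for $i \notin S$, so for every $t \neq 0$ the triple $(\bar{\bA}(t), q, \bar{\pp})$ lies in the open locus where the imaging map is defined, hence $(\bar{\bA}(t), \bar{\pp}) \in \GammaAp^{m,1}$. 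Viewing $t \mapsto (\bar{\bA}(t), \bar{\pp})$ as a morphism from the affine line to $(\PP^{11})^m \times (\PP^2)^m$ and invoking Zariski-closedness of $\GammaAp^{m,1}$ forces the value at $t = 0$, namely $(\bar{\bA}, \bar{\pp})$, to lie in $\GammaAp^{m,1}$ as well, completing the proof.
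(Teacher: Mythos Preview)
Your proof is correct and follows essentially the same three-inclusion strategy as the paper's own argument. The only cosmetic differences are that the paper perturbs the offending cameras by an arbitrary $\bar A_i'$ with $\bar A_i'\bar q\sim\bar p_i$ and passes to the limit along the sequence $1/n$ using Euclidean closure, whereas you make the explicit rank-one choice $\bar p_i v^\top$ and use a morphism from $\mathbb{A}^1$ together with Zariski closure; both are equivalent implementations of the same idea.
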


\begin{proof}
We first show that  $\GammaAp^{m,1}  \subseteq  \V (\kfour\text{-focals})$. 
Suppose $(\bar{\bA}, \bar{\pp}) \in \GammaAp^{m,1},$ and fix a representation for this point in homogeneous coordinates. 
Then there exists a nonzero $4\times 1$ matrix $\bar{q}$ and scalars $\lambda_{1},\ldots , \lambda_m$ with $\bar{A_i} \bar{q} = \lambda_{i} \bar{p_{i}}.$
This implies that the kernel of each $k$-focal matrix~(\ref{eq:k-focal-matrix}) associated to $(\bar{\bA}, \bar{\pp})$ contains the nonzero vector
\begin{equation}
  \label{eq:null}
\left(\BM - \bar{q}  &  \lambda_{\sigma_1} & \cdots & \lambda_{\sigma_k } \EM\right)^\top.
\end{equation}
Thus all $k$-focals, in particular all $\kfour$-focals, vanish at $(\bar{\bA}, \bar{\pp})$.

The containment $\V (\kfour\text{-focals}) \subseteq  \V (m\text{-focals})$ 
follows from~\Cref{prop:bump}.

To finish, we argue that $\V (m\text{-focals}) \subseteq \GammaAp^{m,1}$. Consider a point $$(\bar{\bA}, \bar{\pp}) = (\bar{A}_1, \ldots , \bar{A}_m, \bar{p}_1, \ldots , \bar{p}_m) \in \V (m\text{-focals}),$$
with fixed homogeneous coordinates as before, and consider the vector~\eqref{eq:null} for $\sigma = [m]$.  To prove this inclusion we will 
construct a sequence of points 
$$(\bar{A}_1^{(n)}, \ldots , \bar{A}_m^{(n)}, \bar{p}_1, \ldots , \bar{p}_m) \in \GammaAp^{m,1}$$
converging to $(\bar{\bA}, \bar{\pp}).$ 
Since $\GammaAp^{m,1}$ is closed in the Euclidean topology, this will imply that $(\bar{\bA}, \bar{\pp}) \in \GammaAp^{m,1}$, giving the desired inclusion.
To construct the sequence, set $\bar{A}_i^{(n)} = \bar{A}_i$ for any $i$ with $\lambda_i \ne 0.$
For each index $i$ with $\lambda_i =0,$ choose an arbitrary camera $\bar{A}_i '$ such that $\bar{A}_i ' \bar{q} \sim \bar{p}_i$ 
and set $\bar{A}_i^{(n)} = \bar{A}_i + (1/n) \bar{A}_i '.$
This gives a sequence of points \[
(\bar{A}_1^{(n)} , \ldots , \bar{A}_m^{(n)}, \bar{A}_1^{(n)} \bar{q}, \ldots , \bar{A}_n^{(n)} \bar{q})_{n \ge 1}\]
in $\GammaAp^{m,1}.$
To see that it converges to the given point $(\bar{\bA}, \bar{\pp}) \in \GammaAp^{m,1} ,$ we may consider each camera separately: if $\lambda_i \neq 0$, then $\bar{A}_i^{(n)} = \bar{A}_i$ and
\[
\bar{A}_i^{(n)} \bar{q} = 
\bar{A}_i \bar{q} \sim  \bar{p}_i, 
\]
for all $n,$
whereas if $\lambda_i = 0$, then $\bar{A}_i^{(n)} \to \bar{A}_i$ as $n\to \infty $, and
\[
\bar{A}_i^{(n)} \bar{q} =  \bar{A}_i \bar{q} + \frac{1}{n} \bar{A}_i' \bar{q} = \frac{1}{n} \bar{A}_i' \bar{q} \sim \bar{A}_i' \bar{q} \sim \bar{p}_i.
\]
\end{proof}

\Cref{prop:Ap-set} shows that the ideals of  $m$-focals and $\kfour$-focals both cut out $\GammaAp^{m,1}$ set-theoretically; therefore, by~\Cref{prop:prop-recognition},
\begin{align*}
  \Ap^{m,1} &= \sqrt{\langle \kfour\text{-focals} \rangle : (\frakm_\bA \cap \frakm_\pp)^\infty }  \\
  &= \sqrt{\langle m\text{-focals} \rangle : (\frakm_\bA \cap \frakm_\pp)^\infty }.
\end{align*}
\Cref{thm:Ap-GB} will establish the following precise relationship between the three ideals:
\begin{equation}\label{eq:equality-Ap-focals}
\Ap^{m,1} = \langle \kfour\text{-focals} \rangle  \\
  = \langle m\text{-focals} \rangle : \frakm_\pp^\infty .
\end{equation}

Our next step in the proof of \Cref{thm:Ap-GB} is to note that the $m$-focals form a 
universal Gr\"obner basis for the ideal they generate.
As noted in~\cite{DBLP:conf/iccv/TragerHP15}[Proposition b3], the $m$-focal ideal belongs to 
the class of \emph{sparse determinantal ideals}, studied in algebraic geometry and commutative algebra.
Sparse (maximal) determinantal ideals are generated by the maximal minors of matrices whose nonzero entries are distinct indeterminates.
These ideals were studied by Giusti and Merle~\cite{GM82}, who determined their dimensions and associated primes, and later by Boocher~\cite{Boo12}, who, in addition to determining their minimal free resolutions and several related invariants, showed that the maximal minors form a universal Gr\"{o}bner basis.
The following is an immediate consequence of~\cite{Boo12}[Proposition 5.4] and~\Cref{prop:radical}.

\begin{proposition}
\label{prop:m-focal-UGB}
For $n=1$ world point and any number of cameras $m$, the $m$-focals form a universal Gr\"{o}bner basis for the ideal they generate, and this ideal is radical.
\end{proposition}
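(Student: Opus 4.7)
The plan is to read the $m$-focal matrix in~\eqref{eq:k-focal-matrix} (with $k=m$) as a ``sparse generic'' matrix in the sense of Boocher~\cite{Boo12}, and then apply the two cited results essentially as a black box. Concretely, I would first check that the $3m \times (4+m)$ matrix has the required form: every entry is either $0$ or a distinct indeterminate. Indeed, the left block contains the $12m$ entries of $A_1, \ldots, A_m$ (all distinct), and the extra $m$ columns contain the $3m$ entries of $p_1, \ldots, p_m$ (also distinct and disjoint from the $A_i$). No indeterminate appears twice. Thus Boocher's Proposition 5.4 applies, yielding that the maximal minors (that is, the $m$-focals) form a universal Gr\"obner basis for the ideal $\langle m\text{-focals}\rangle$.

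For radicality, I would invoke~\Cref{prop:radical}. Fix any monomial order on $\CC[\bA, \pp]$. Since the $m$-focals are a universal Gr\"obner basis, they are in particular a Gr\"obner basis with respect to this order. The leading term of any $m$-focal is, by the usual Leibniz expansion of the determinant, a signed monomial of the form $\prod_{i=1}^{4+m} M_{\alpha(i), i}$ where $\alpha$ is a permutation and each $M_{\alpha(i),i}$ is a nonzero entry of the $m$-focal matrix. Because all nonzero entries are distinct indeterminates, this product is a squarefree monomial. Hence~\Cref{prop:radical}(1) applies and the ideal is radical.

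There is essentially no obstacle: the entire content of the statement is the recognition that the $m$-focal matrix falls squarely within Boocher's framework, together with the elementary observation that minors of a matrix whose nonzero entries are distinct variables have squarefree leading terms with respect to every monomial order. No further combinatorial argument specific to the geometry of cameras is needed; the proof should fit comfortably in a short paragraph.
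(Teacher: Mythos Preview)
Your proposal is correct and matches the paper's approach essentially verbatim: the paper observes that the $m$-focal matrix is a sparse generic matrix (its nonzero entries are distinct indeterminates), invokes Boocher's Proposition~5.4 to get the universal Gr\"{o}bner basis statement, and then appeals to~\Cref{prop:radical} for radicality via the squarefree initial terms. Your extra sentence verifying that the nonzero entries really are distinct indeterminates is exactly the check the paper leaves implicit.
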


\Cref{prop:m-focal-UGB} does not directly give us any information about $\langle \kfour \text{-focals} \rangle ,$ since its generators are minors of several different matrices which share variables. However, \Cref{thm:Ap-GB} asserts that the  $\kfour$-focals form a Gr\"{o}bner basis for a large class of term orders.
To get there, we next develop a result of a similar flavor to~\Cref{lem:grevlex}, about Gr\"{o}bner bases of multihomogeneous ideals in 
$\CC [\xx_1, \ldots , \xx_k, \yy]$ whose elements are linear in each set of variables $\xx_1, \ldots , \xx_k$.
This is~\Cref{lem:colon-wellsupported}, which deals with a product order instead of a GRevLex order.

Recall that two monomial orders $<_\xx  $ on $\CC [\xx]$ and $<_\yy$ on $\CC[\yy]$ specify a \emph{product order} on $\CC[\xx, \yy ]$ with $\yy < \xx $ as follows: $\xx^{\alpha_1} \yy^{\beta_1} < \xx^{\alpha_2} \yy^{\beta_2 }$ when either $\xx^{\alpha_1} <_\xx \xx^{\alpha_2}  $ or $\alpha_1 = \alpha_2$ and $\yy^{\beta_1} <_\yy \yy^{\beta_2} .$
More generally, we may define a product order on $\CC[ \xx_1, \ldots , \xx_k]$ with $\xx_k < \cdots < \xx_1$ by comparing monomials first using a monomial order $<_{\xx_1}$ on $\CC [\xx_1]$ and breaking any ties successively with $<_{\xx_2}, \ldots , <_{\xx_k}.$
We also need the following condition on the monomial supports of a (partially) multilinear polynomial $g.$

\begin{definition}\label{def:well-supported}
Suppose $g \in \CC [\xx_1, \ldots ,\xx_k, \yy]$ is multilinear in $\xx_1, \ldots , \xx_k$ and supported in each group of variables on the subsets $\xx_j' \subset \xx_j$ for $j=1,\ldots , k.$
We say that $g$ is well-supported on $\xx_1, \ldots , \xx_k$ if every monomial of the form $\prod_{j=1}^k x_j$ with $x_j\in \xx_j '$ divides some term of $g.$
\end{definition}

Essentially, $g$ is well-supported if it is multilinear in $\xx_1, \ldots , \xx_k$ and its monomial support in $\xx_1', \ldots , \xx_k '$ (the $\xx$ variables appearing in the support of $g$) is as large as possible.

\begin{example} \label{ex:well-supported}
Consider
\[
g = x_{1 2} x_{2 2} + y^2 x_{1 1} x_{2 1} + 3 x_{1 1} x_{2 2} + 2  y x_{1 2} x_{2 1}
\]
in the polynomial ring $\CC [x_{1 1} , x_{1 2}, x_{1 3}, x_{2 1}, x_{2 2}, y].$
Then $g$ is 
well-supported on $\xx_1 = \{x_{1 1}, x_{1 2}, x_{1 3}\}$ and 
$\xx_2 = \{x_{2 1}, x_{2 2}\}$. The monomial support of 
$g$ comes from $\xx_1 ' = \{ x_{1 1}, x_{1 2} \}$
and $\xx_2 ' = \xx_2 $ and indeed, all four products of variables from
$\xx_1'$ and $\xx_2'$ divide some term of $g.$

Suppose now that we remove the first term from $g$: 
\[
g' = y^2 x_{1 1} x_{2 1} + 3 x_{1 1} x_{2 2} + 2  y x_{1 2} x_{2 1}.
\]
Then $g'$ is not
well-supported on $\xx_1$ and $\xx_2$ since no term of $g'$ is divisible by 
$x_{1 2} x_{2 2}$.
\end{example}

The following fact is used implicitly in the proof of~\cite[Theorem 2.1]{AST13}.
\begin{proposition}\label{prop:well-supported}
Suppose $g \in \CC [\xx_1, \ldots ,\xx_k, \yy]$ is multilinear in 
$\xx_1, \ldots , \xx_k$ and well-supported on $\xx_1, \ldots \xx_k$.
Let $\xx = \xx_1 \cup \cdots \cup \xx_k$, and $<$ be a product order with $\yy < \xx $ formed from $<_\xx $ and $<_\yy .$
Then the leading term $in_< (g) $ depends only on $<_\yy$ and the relative ordering of the variables within each group $\xx_1,\ldots \xx_k$.
\end{proposition}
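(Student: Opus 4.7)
The plan is to exploit the structure imposed by multilinearity and well-supportedness to reduce the computation of $in_<(g)$ to a two-stage optimization that is insensitive to comparisons across the groups $\xx_j$. Since $g$ is multilinear in each $\xx_j$ and its $\xx_j$-support lies in $\xx_j'$, every $\xx$-monomial appearing in $g$ lies in the ``box'' $\mathcal{B} := \{\prod_{j=1}^k x_j : x_j \in \xx_j'\}$. The well-supportedness hypothesis is the converse statement: every element of $\mathcal{B}$ actually appears as the $\xx$-part of some term of $g$.

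Now, under a product order with $\yy < \xx$, the leading term of $g$ is computed by first picking the $<_\xx$-maximum $\xx$-monomial $\xx^{\alpha^*}$ appearing in $g$, and then using $<_\yy$ to pick the leading $\yy$-term among the terms of $g$ with $\xx$-part $\xx^{\alpha^*}$. The key claim is that this $<_\xx$-maximum is $\xx^{\alpha^*} = \prod_{j=1}^k x_j^*$, where $x_j^*$ is the maximum of $\xx_j'$ under the restriction of $<_\xx$ to $\CC[\xx_j]$. Indeed, for any product $x_1 \cdots x_k \in \mathcal{B}$, iterated application of the monomial-order axiom (if $a \le b$ then $ac \le bc$) gives the chain $x_1 x_2 \cdots x_k \le x_1^* x_2 \cdots x_k \le x_1^* x_2^* x_3 \cdots x_k \le \cdots \le x_1^* \cdots x_k^*$. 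Well-supportedness then ensures that $\xx^{\alpha^*}$ is actually attained by some term of $g$, so the first-stage maximum is $\xx^{\alpha^*}$.

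Each $x_j^*$ is intrinsic to the restriction of $<_\xx$ to $\CC[\xx_j]$ and hence depends only on the relative ordering of the variables inside $\xx_j$; in particular, $\xx^{\alpha^*}$ is unchanged by any reshuffling of how $<_\xx$ compares variables from different groups. For the second stage, the $\yy$-coefficient of $\xx^{\alpha^*}$ in $g$ is some polynomial $h \in \CC[\yy]$, and $in_<(g) = \xx^{\alpha^*} \cdot in_{<_\yy}(h)$, which depends on the $\yy$-data only through $<_\yy$. Combining the two stages proves the proposition. The only delicate point---and thus the one place where the hypotheses must genuinely be used---is the invocation of well-supportedness: without it, the candidate $\xx^{\alpha^*}$ could fail to appear in $g$, and the leading $\xx$-monomial might then depend on how $<_\xx$ compares variables across the groups.
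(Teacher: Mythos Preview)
Your proof is correct and follows essentially the same approach as the paper: both identify the leading $\xx$-monomial as $\prod_{j=1}^k x_j^*$ with $x_j^*$ the $<_\xx$-largest variable in $\xx_j'$, invoke well-supportedness to guarantee this monomial actually occurs, and then break ties via $<_\yy$. Your version is simply more explicit about the chain inequality and the two-stage structure of the product order, which the paper leaves implicit.
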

\begin{proof}
Let $x_1, \ldots , x_k$ be the largest variables with respect to $<_\xx$ in each of the sets $\xx_1', \ldots , \xx_k'$. 
Since $g$ is well-supported on $\xx_1, \ldots , \xx_k$, it has a monomial of the form $(\prod_{j=1}^k x_i) \cdot \yy^{\alpha }.$ 
Choosing $\yy^{\alpha }$ maximal with respect to $<_\yy ,$  $in_< (g) = c\cdot (\prod_{j=1}^k x_i) \cdot \yy^{\alpha }$ for some $c\in \CC.$
\end{proof}

We note that~\Cref{prop:well-supported} may fail when $g$ is not well-supported.
In \Cref{ex:well-supported}, suppose $x_{1 1} < x_{1 2}$ in $\xx_1$ and 
$x_{2 1} < x_{2 2}$ in $\xx_2,$. Then since, $x_{1 2} x_{2 2}$ is not in the monomial support of $g',$ we could have either $in_< (g') = 2 y x_{1 2} x_{2 1}$ or $in_< (g') = 3 x_{1 1} x_{2 2}.$ 

\begin{lemma}\label{lem:colon-wellsupported}
With notation as in~\Cref{prop:well-supported}, let $G$ be a Gr\"{o}bner basis for a $\ZZ^{k+1}$-homogeneous ideal $I$ in $\CC [\xx_1, \ldots , \xx_k, \yy]$ with respect to a product order where $\yy < \xx$ . 
If every element of $G$ is multilinear and well-supported on $\xx_1, \ldots, \xx_k$ 
and $x_i$ is the cheapest variable in $\xx_i$, then 
\[
G' = \{ g \in G\suchthat x_i \text{ does not divide } g \} \cup \{ (g/x_i) \suchthat g \in G \textup{ and }  x_i \text{ divides } g\} 
\]
is a Gr\"{o}bner basis with respect to $<$ for the ideal quotient $I: x_i .$ 
\end{lemma}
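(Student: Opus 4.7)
To show $G'$ is a Gr\"obner basis for $I : x_i$, I will verify that $G' \subseteq I : x_i$ and that for every nonzero $f \in I : x_i$, the leading term $\mathrm{in}_<(f)$ is divisible by $\mathrm{in}_<(g')$ for some $g' \in G'$; combined with the standard division argument, these together imply both $\langle G' \rangle = I : x_i$ and that $G'$ is a Gr\"obner basis. Containment $G' \subseteq I : x_i$ is immediate: any $g \in G$ with $x_i \nmid g$ satisfies $g \in I \subseteq I : x_i$, while if $x_i \mid g$ then $x_i \cdot (g/x_i) = g \in I$, so $g/x_i \in I : x_i$.

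\textbf{Key observation.} The substance of the lemma will follow from the equivalence
\[
x_i \mid g \quad \Longleftrightarrow \quad x_i \mid \mathrm{in}_<(g), \qquad \text{for every } g \in G,
\]
which uses precisely the hypotheses of the lemma. Since $g$ is multilinear in $\mathbf{x}_i$, $x_i \mid g$ holds iff every term of $g$ uses $x_i$ as its $\mathbf{x}_i$-variable, i.e., iff $\mathbf{x}_i' = \{x_i\}$ in the notation of~\Cref{def:well-supported}. On the other hand,~\Cref{prop:well-supported} shows that $\mathrm{in}_<(g)$ uses the $<_{\mathbf{x}_i}$-maximal element of $\mathbf{x}_i'$ as its $\mathbf{x}_i$-variable; since $x_i$ is cheapest in $\mathbf{x}_i$, this maximum equals $x_i$ precisely when $\mathbf{x}_i' = \{x_i\}$.

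\textbf{Leading-term divisibility.} Given $0 \neq f \in I : x_i$, we have $x_i f \in I$, so there exists $g \in G$ with $\mathrm{in}_<(g) \mid \mathrm{in}_<(x_i f) = x_i \cdot \mathrm{in}_<(f)$. If $x_i \mid g$, then $\mathrm{in}_<(g) = x_i \cdot \mathrm{in}_<(g/x_i)$, so cancelling $x_i$ yields $\mathrm{in}_<(g/x_i) \mid \mathrm{in}_<(f)$. If instead $x_i \nmid g$, then by the key observation $x_i \nmid \mathrm{in}_<(g)$; since $x_i$ is a single variable, $\gcd(\mathrm{in}_<(g), x_i) = 1$, and $\mathrm{in}_<(g) \mid x_i \cdot \mathrm{in}_<(f)$ forces $\mathrm{in}_<(g) \mid \mathrm{in}_<(f)$. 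Either way some element of $G'$ has leading term dividing $\mathrm{in}_<(f)$.

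\textbf{Main obstacle.} The nontrivial step is the key observation, and its hypotheses are exactly what it needs: the product-order structure decouples the $\mathbf{x}$-leading term from the $\mathbf{y}$-ordering so that~\Cref{prop:well-supported} can be applied, while well-supportedness guarantees that the $<_{\mathbf{x}_i}$-maximum of $\mathbf{x}_i'$ actually appears in $\mathrm{in}_<(g)$. Without well-supportedness, $\mathrm{in}_<(g)$ could use a cheaper variable of $\mathbf{x}_i$, just as in the polynomial $g'$ of~\Cref{ex:well-supported}, and the equivalence would fail.
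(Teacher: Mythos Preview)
Your proof is correct and follows essentially the same approach as the paper's. Both arguments reduce to showing that for $g\in G$ one has $x_i \mid g \iff x_i \mid \mathrm{in}_<(g)$, and then split into the same two cases; you simply make the equivalence explicit and add the routine verification $G' \subseteq I:x_i$, which the paper leaves implicit.
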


\begin{proof}
The proof is analogous to that given for~\Cref{lem:grevlex} in~\cite{Stu96}.
For any $f\in I : x_i$, we have that $x_i f \in I$ and hence $in_<(g)$ divides $x_i in_<(f)$ for some $g\in G.$
We must show that the leading term of some element of $G'$ divides $in_<(f)$. 
We argue by considering two cases.
In the first case, if $x_i$ divides $g,$ then the leading term of $g/x_i\in G'$ divides $in_<(f).$
In the second case, $x_i$ does not divide $g$ and hence $g\in G'.$ 
\Cref{prop:well-supported} then implies that $x_i$ also does not divide $in_<(g)$.
Indeed, if $x_i$ did divide $in_<(g),$ then $x_i$ would be the most expensive variable in $\xx_i '$, and this would force $\xx_i ' = \{ x_i \}.$ By the well-supportedness of $g,$ this would imply that $x_i$ divides $g$, a contradiction.
Thus $x_i$ does not divide $in_<(g)$, which implies $in_< (g)$ divides $in_<(f).$
\end{proof}

Applying~\Cref{lem:colon-wellsupported} over \emph{all} product orders with $\yy < \xx$ yields the following.

\begin{corollary}\label{cor:universal-product}
With notation as in~\Cref{lem:colon-wellsupported}, if $G$ is a Gr\"{o}bner basis with respect to all product orders with $\yy < \xx,$ then the same is true for $G'.$
\end{corollary}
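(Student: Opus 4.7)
The plan is to apply Lemma~\ref{lem:colon-wellsupported} separately to each product order in the class. The essential observation is that the construction of $G'$ from $G$ is purely combinatorial: once $x_i$ is fixed, one simply partitions $G$ according to divisibility by $x_i$ and either keeps each element or divides it by $x_i$. No reference to any monomial order enters this construction, so a single set $G'$ is determined and serves uniformly across the entire class of product orders under consideration.

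Fix any product order $<$ with $\yy < \xx$ for which $x_i$ is cheapest in $\xx_i$, as required by Lemma~\ref{lem:colon-wellsupported}. Multilinearity and well-supportedness on $\xx_1, \ldots, \xx_k$ are intrinsic properties of the individual elements of $G$ and do not depend on $<$, so they remain in force. By hypothesis, $G$ is a Gr\"obner basis for $I$ with respect to $<$. All hypotheses of Lemma~\ref{lem:colon-wellsupported} are therefore met, and that lemma yields immediately that $G'$ is a Gr\"obner basis for $I : x_i$ with respect to $<$. Since $<$ was arbitrary in the class, $G'$ is a Gr\"obner basis for $I : x_i$ under every such product order.

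There is essentially no technical obstacle here; the content of the corollary is simply the observation that the passage from $G$ to $G'$ is order-agnostic, so the universality of the Gr\"obner basis property is preserved verbatim by Lemma~\ref{lem:colon-wellsupported}. The only verification worth flagging is the one already made above, that well-supportedness is a property of polynomials in isolation and not of the ambient monomial order, which is what allows the hypotheses of Lemma~\ref{lem:colon-wellsupported} to persist uniformly.
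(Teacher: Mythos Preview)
Your argument is correct and matches the paper's approach exactly: the paper's proof is the single sentence ``Applying Lemma~\ref{lem:colon-wellsupported} over all product orders with $\yy < \xx$ yields the following,'' and you have simply unpacked that sentence. You make explicit what the paper leaves implicit---that the passage $G \mapsto G'$ depends only on the fixed variable $x_i$ and not on the monomial order, and that multilinearity and well-supportedness are intrinsic to the elements of $G$---so that the hypotheses of Lemma~\ref{lem:colon-wellsupported} persist uniformly across the class of orders.
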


Thus, if $G$ is a Gr\"{o}bner basis with respect to all product orders with $\yy < \xx$, then we obtain a Gr\"{o}bner basis with respect to all product orders for every successive quotient of $\langle G \rangle$ by variables in $\xx.$
With this tool in hand, we are now ready to prove~\Cref{thm:Ap-GB}.

\begin{proof}[Proof of~\Cref{thm:Ap-GB}]
We construct an ascending chain of ideals
  \[
  \langle m\text{-focals} \rangle = J_0 \subset J_1 \subset \cdots \subset J_{m} = \langle \kfour\text{-focals} \rangle
  \]
  where each $J_k = \langle G_k \rangle $ is given by an explicit set of generators $G_k$ which we now define.
  Set $G_0 = \{ m\text{-focals} \}$, and for $k>0,$
\begin{align*}
G_k = & \{ g \in G_{k-1} \suchthat p_k [i] \text{ doesn't divide } g \, \, \forall i \}
 \,\,\displaystyle\cup \\
& \{ (g/p_k[i]) \text{ s.t. } g \in G_{k-1} \textup{ and } 
p_k[i] \text{ divides } g \textup{ for some } i \}.
\end{align*}
Applying~\Cref{prop:bump}, one can also fix a $j \in [3]$ and obtain $G_k$ as 
\begin{align*}
G_k = & \{ g \in G_{k-1} \suchthat p_k[i] \text{ doesn't divide } g \, \, \forall i \} \,\, \displaystyle\cup\\
& \{ (g/p_k[j]) \suchthat  g \in G_{k-1} \textup{ and } p_k[j] \text{ divides } g \}. 
\end{align*}

Let $<$ be any product order with $\bA < \pp .$
We now argue that $G_k$ is a Gr\"{o}bner basis with respect to $<$ for all $k$. This is true for 
$k=0$ by~\Cref{prop:m-focal-UGB}, so suppose $G_{k-1}$ is a Gr\"{o}bner basis 
with respect to $<$ for some $k>0$. 
Then 
\Cref{prop:bump} implies that any element $g\in G_k$ is a $\ell$-focal for some integer $\ell.$
An $\ell$-focal is the determinant of a minor of \eqref{eq:k-focal-matrix} for 
$k = \ell$ involving $4+\ell$ rows. Every term in this determinant is 
multilinear in $\pp_1, \ldots, \pp_m$ and thus involves exactly one variable from 
each subset of supporting variables $\pp_1 ' \subset \pp_1, \ldots , \pp_m ' \subset \pp_m$. Conversely, every possible product of $\ell$ 
variables, one from each $\pp_i '$ appears in a term of the 
$\ell$-focal. Therefore,  $g$ is well-supported on $\pp_1, \ldots , \pp_m$. 
Moreover, each term of the $\ell$-focal is the product of a 
$\pp$-monomial as above and a $4\times 4$ minor of of the symbolic matrix $\stacked{A_1}{A_m}$.

By \Cref{cor:universal-product}, $G_k$ is a Gr\"{o}bner basis with respect to $<$ for the ideal
\begin{equation}
\label{eq:sat-p}
  J_{k} = J_{k-1} : p_k[ 1]  = J_{k-1} : p_k[2] = J_{k-1} :  p_k[3] .
\end{equation}

When $k=m,$~\Cref{prop:bump} establishes the equality $G_m = \{ \kfour \text{-focals} \}$ and hence $J_m = \langle \kfour \text{-focals} \rangle $.
Further, $G_m$ is a Gr\"{o}bner basis with respect to all product orders with $\bA < \pp .$ 

To conclude that $J_m = \Ap^{m,1},$ we verify that $J_m$ satisfies the three conditions of~\Cref{prop:prop-recognition}.
\Cref{prop:Ap-set} gives the set-theoretic Condition 1.
For Condition 3, we must show that $J_m$ is radical; this follows from~\Cref{prop:radical} since the lead terms of the $\kfour$-focals are all squarefree.
Finally, for Condition 2, we calculate

\begin{align*}
J_m &\subset J_m : \frakm_\pp^{\infty } \\
&= J_m : \frakm_\pp \tag{$J_m$ is radical}\\
&\subset  J_m : \left( \displaystyle\prod_{k=1}^m (p_k[1] \cdot p_k [2] \cdot p_k [3]) \right) \\
&= \left( \left( J_m : p_1 [1]\right)  : \cdots \right): p_m [3]\\
&= J_m, \tag{\Cref{cor:universal-product} w/ $G=G_m$}
\end{align*}
forcing $J_m = J_m : \frakm_\pp^{\infty }.$
A similar calculation, shown below, gives $J_m = J_m : \frakm_\bA^{\infty }$:
\begin{align*}
J_m &= J_0 : \frakm_\pp \tag{\Cref{prop:bump}}\\
    &= (J_0 : \frakm_\bA ) : \frakm_\pp \tag{\Cref{prop:bump}, \Cref{prop:m-focal-UGB}}\\
    &= (J_0 : \frakm_\pp ) : \frakm_\bA \\
    &= J_m : \frakm_\bA \\
   &= J_m : \frakm_\bA^{\infty }. \tag{$J_m$ is radical} 
\end{align*}
Thus $J_m = J_m : (\frakm_\bA \cap \frakm_\pp )^\infty  $, and we may conclude from~\Cref{prop:prop-recognition} that
\[
\langle \kfour \text{-focals} \rangle = J_m = \Ap^{m,1}.
\]
\end{proof}

\section{$\Abarp^{m,1}$ : The multiview ideal for $n=1$}\label{sec:Abarp}

We now specialize the camera arrangement $\bA$ and consider the ideal 
$\Abarp^{m,1}$. Recall that the \emph{multiview variety} is the closed image of the following rational imaging map associated to a camera arrangement $\bar{\bA}$: 
\begin{align}\label{eq:imaging map}
\begin{split}
\varphi_{\bar{\bA}} : \PP^3 &\dashrightarrow (\PP^2)^m \\
q &\mapsto (\bar{A}_1 q , \ldots , \bar{A}_m q ).
\end{split}
\end{align}
The closure of the image of $\varphi_{\bar{\bA}}$ is precisely the variety $\GammaAbarp^{m,1}$ in~\eqref{eq:GammaAbarp}. 
Thus, the vanishing ideal $\Abarp^{m,1}$ is none other than the \emph{multiview ideal} of the camera arrangement $\bar{\bA}$~\cite{APT21, AST13}. Much previous work has also gone into studying various polynomials that vanish on $\GammaAbarp^{m,1}$~\cite{DBLP:conf/iccv/FaugerasM95,HA97,ma2004invitation,Tri95,DBLP:conf/iccv/TragerHP15} 

In this section, we show how
Theorem~\ref{thm:Ap-GB} recovers two important previously-known theorems about multiview ideals.
Each of these previous results assumes the camera arrangement $\bar{\bA}$ is generic in one of the senses discussed in~\Cref{sec:genericity}.
The first theorem characterizes a \emph{universal Gr\"{o}bner basis} for the multiview ideal of a minor generic camera arrangement; that is, a Gr\"{o}bner basis with respect to all possible monomial orders on $\CC [\pp ].$

\begin{theorem}[{\cite[Theorem 2.1]{AST13}}]
\label{thm:AST-UGB}
For minor generic $\bar{\bA},$ the specialized $\kfour$-focals from $\Ap^{m,1}$ form a universal Gr\"{o}bner basis for
the multiview ideal $\Abarp^{m,1}.$
\end{theorem}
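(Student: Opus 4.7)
The plan is to deduce \Cref{thm:AST-UGB} from \Cref{thm:Ap-GB} by specializing the symbolic camera arrangement, with minor genericity ensuring that the relevant leading terms survive the specialization.

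Fix any monomial order $<_\pp$ on $\CC[\pp]$ and extend it to a product order $<$ on $\CC[\bA, \pp]$ with $\bA < \pp$. By \Cref{thm:Ap-GB}, the $\kfour$-focals form a Gr\"obner basis for $\Ap^{m,1}$ under $<$. The crucial structural fact is that each $k$-focal $g$ admits a Laplace expansion $g = \sum_\alpha \epsilon_\alpha \, M_\alpha \, p^\alpha$, where each $M_\alpha$ is a \emph{single} $4\times 4$ minor of $\stacked{A_1}{A_m}$, $\epsilon_\alpha \in \{\pm 1\}$, and $p^\alpha$ is a multilinear $\pp$-monomial. Hence the leading term of $g$ under $<$ has the form $\pm \, \mathrm{LT}_{<_\bA}(M_{g^*}) \cdot m_g$, where $m_g$ is the $<_\pp$-maximal $\pp$-monomial of $g$ and $M_{g^*}$ is its (single) $4\times 4$ minor coefficient. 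Under minor genericity $M_{g^*}(\bar{\bA}) \neq 0$, so the specialization $g(\bar{\bA}, \pp) \in \CC[\pp]$ has $<_\pp$-leading monomial precisely $m_g$.

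The main step, and the main obstacle, is establishing the Gr\"obner basis property for $\bar J := \langle g(\bar{\bA}, \pp) : g \in \{\kfour\text{-focals}\}\rangle$ with respect to $<_\pp$: every nonzero $h \in \bar J$ should have $\mathrm{LT}_{<_\pp}(h)$ divisible by some $m_g$. Given $h \in \bar J$, choose a lift $H \in \Ap^{m,1}$ (so $H(\bar{\bA}, \pp) = h$) that is minimal in the $<$-order among all such lifts. By \Cref{thm:Ap-GB}, $\mathrm{LT}_<(H)$ is divisible by $\mathrm{LT}_<(g_i)$ for some $i$, hence the $\pp$-part of $\mathrm{LT}_<(H)$ is divisible by $m_{g_i}$. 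The subtle point is that the $\pp$-part of $\mathrm{LT}_<(H)$ must coincide with $\mathrm{LT}_{<_\pp}(h)$; this is where minor genericity enters, as it precludes the cancellations of $4\times 4$ minor coefficients upon specialization that would otherwise allow one to strictly decrease $\mathrm{LT}_<(H)$ by modifying $H$ within $\Ap^{m,1}$.

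Once the Gr\"obner basis property is in hand, $\bar J = \Abarp^{m,1}$ follows from the recognition criterion (\Cref{prop:prop-recognition}). Condition 1 (set-theoretic equality with $\GammaAbarp^{m,1}$) follows by specializing the arguments of \Cref{prop:Ap-set}; Condition 3 (radicality) follows from \Cref{prop:radical}, since the leading monomials $m_g$ are squarefree; and Condition 2 (saturation with respect to $\frakm_\pp$) follows by reproducing the saturation chain at the end of the proof of \Cref{thm:Ap-GB}, all of whose steps respect specialization at a minor generic $\bar{\bA}$. Since the argument proceeds for arbitrary $<_\pp$, we obtain a universal Gr\"obner basis.
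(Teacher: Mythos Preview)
Your overall plan matches the paper's: deduce the result from \Cref{thm:Ap-GB} by specialization, then invoke \Cref{prop:prop-recognition}. However, the central step---the Gr\"obner basis property for $\bar J$---is not established by your minimal-lift argument, and this is a genuine gap rather than a detail.

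The issue is the claim that for a $<$-minimal lift $H$ of $h$, the top $\pp$-coefficient $c_{\alpha^*}(\bA)$ satisfies $c_{\alpha^*}(\bar{\bA})\neq 0$. Minor genericity only controls the $4\times 4$ minors of $\stacked{A_1}{A_m}$; but $c_{\alpha^*}(\bA)$ is an \emph{arbitrary} element of $\CC[\bA]$ (a $\CC[\bA]$-linear combination of such minors, in general), and nothing forces it to be nonzero at $\bar{\bA}$. Your suggested contrapositive---that if $c_{\alpha^*}(\bar{\bA})=0$ one could strictly decrease $\mathrm{LT}_<(H)$ while staying a lift---does not go through: subtracting a monomial multiple of a focal $g$ from $H$ reduces $\mathrm{LT}_<$, but changes the specialization by a nonzero multiple of $\bar g$, so the result is no longer a lift of $h$. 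There is no evident element of $\Ap^{m,1}\cap\ker(\text{specialization})$ with the required leading term.

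The paper avoids this by arguing via Buchberger's S-pair criterion: since the $\kfour$-focals form a Gr\"obner basis for $\Ap^{m,1}$ under $<$, each S-pair has a standard representation in terms of the focals; because every $\pp$-coefficient of every focal is a single $4\times 4$ minor (hence nonzero at $\bar{\bA}$ by minor genericity), these representations specialize to standard representations for the S-pairs of the specialized focals in $\CC[\pp]$. This is the standard ``specialization of Gr\"obner bases under a product order when leading coefficients do not vanish'' argument, and it is what you should use instead.

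A smaller point: for Condition~1 of the recognition criterion you cite \Cref{prop:Ap-set}, but that proof works by perturbing the \emph{cameras}, which are now fixed. The correct reference is \Cref{prop:multiview-focals}, whose proof perturbs the world point $q$ and genuinely uses the hypothesis of pairwise distinct centers.
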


In the work of Aholt, Sturmfels, and Thomas~\cite{AST13},~\Cref{thm:AST-UGB} is one of the main results needed to study the Hilbert Scheme parametrizing multiview ideals.
We will obtain this result as a direct corollary of~\Cref{thm:Ap-GB}. 
Our argument is essentially independent of the original proof, which requires relating the multiview ideal to yet another Hilbert scheme studied in~\cite{CS10}.
In our proof, Boocher's result in~\Cref{prop:m-focal-UGB} does much of the heavy lifting.
Boocher's result in turn relies on an older result of Bernstein, Sturmfels, and Zelevinsky~\cite{BZ93,SZ93}, the proof of which was later greatly simplified by Conca, De Negri, and Gorla~\cite{CDG15}.
We also note that $\Abarp^{m,1}$, is \emph{not} a sparse determinantal ideal, since the matrices involved have some 
nonzero constant entries. Therefore \Cref{prop:m-focal-UGB} 
cannot be applied directly to prove~\Cref{thm:AST-UGB}.

Recall that minor genericity of $\bar{\bA}$ is stronger than $\bar{\bA}$ 
having pairwise distinct centers. For arbitrary $\bar{\bA}$ with pairwise distinct centers, the Gr\"{o}bner-theoretic~\Cref{thm:AST-UGB} no longer applies.
However, we still have the following ideal-theoretic characterization of $\Abarp^{m,1}.$

\begin{theorem}[{\cite[Theorem 3.7]{APT21}}]\label{cor:2-3-focal}
For $\bar{\bA}$ with pairwise distinct centers, $\Abarp^{m,1}$ is generated by the specialized $\kthree$-focals.
\end{theorem}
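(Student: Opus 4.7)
The plan is to derive this theorem as a corollary of our Gr\"obner basis characterization of $\Ap^{m,1}$ (\Cref{thm:Ap-GB}), paralleling the specialization-based derivation of \Cref{thm:AST-UGB} but with an additional step that removes the $4$-focals from the generating set.

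First, I would invoke \Cref{thm:orbits}(1) to reduce to the case of minor generic $\bar{\bA}$: any camera arrangement with pairwise distinct centers is $\PGL_3^m$-equivalent to a minor generic arrangement, and the $2$-, $3$-, and $4$-focal ideals, as well as $\Abarp^{m,1}$ itself, all transform equivariantly under this action, since replacing $\bar{A}_i$ by $H_i \bar{A}_i$ corresponds to replacing $p_i$ by $H_i^{-1} p_i$ in every focal matrix of the form~\eqref{eq:k-focal-alternate}. Assuming henceforth that $\bar{\bA}$ is minor generic, \Cref{thm:AST-UGB} tells us that $\Abarp^{m,1}$ is generated by the specialized $\kfour$-focals. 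Thus the theorem reduces to showing that every specialized $4$-focal lies in the ideal generated by the specialized $2$- and $3$-focals.

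For this final and most delicate step, I would exhibit an explicit algebraic identity expressing each $4$-focal as a combination of $3$-focals with coefficients in $\CC[\pp]$, where the combination is taken modulo denominators that are $4\times 4$ minors of $\stacked{\bar{A}_1}{\bar{A}_m}$ (nonzero by minor genericity). The construction augments the $8\times 8$ matrix defining a $4$-focal for cameras $\bar{A}_{i_1},\ldots,\bar{A}_{i_4}$ by a ninth row drawn from $\bar{A}_{i_4}$ at an index outside $\row_4$, and then extracts relations among the $8\times 8$ minors of this $9\times 8$ array via Laplace expansion. These relations express the $4$-focal as $\CC[\bar{\bA}]$-weighted sums of $3$-focals, times entries of $p_{i_4}$ and $\bar{A}_{i_4}$. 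Clearing the minor-generic denominators and applying a saturation argument in the spirit of \Cref{lem:colon-wellsupported} (using the fact that the ideals of specialized $2$- and $3$-focals are saturated at $\frakm_\pp$ for minor generic $\bar{\bA}$) then yields the desired membership in $\CC[\pp]$ itself.

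The main obstacle is producing the explicit identity of the last step. While the redundancy of $4$-view constraints is folkloric in multiview geometry, turning it into an ideal-membership statement (as opposed to a merely set-theoretic, or saturation-up-to-minors, statement) requires careful multilinear-algebra bookkeeping: one must track how the multidegree-$(1,1,1,1)$ $4$-focal decomposes into multidegree-$(1,1,1,0)$ pieces times entries of $p_{i_4}$, and verify that the $4\times 4$ minor denominators introduced along the way cancel modulo the $3$-focal ideal. As a fallback, one could replicate the inductive argument of Aholt--Ponce--Thomas, reducing to an $m=4$ base case handled by direct elimination, and then bootstrapping to larger $m$ using the compatibility of our specialization arguments with subsets of cameras.
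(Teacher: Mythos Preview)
Your overall architecture matches the paper exactly: reduce to minor generic $\bar{\bA}$ via \Cref{thm:orbits}(1) and the $\PGL_3^m$-equivariance of the focal ideals, invoke \Cref{thm:AST-UGB} to get that the specialized $\kfour$-focals generate $\Abarp^{m,1}$, and then show the specialized $4$-focals are already in the ideal generated by specialized $2$- and $3$-focals.

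The divergence is in this last step, and here the paper's route is much shorter than yours. The paper simply specializes the identity~\eqref{eq:sat-23-focal} from \Cref{ex:Ap-4}: for any $4$-focal $f$ one has, in the \emph{symbolic} ring, $\det A_i[:, \{i_1,i_2,i_3\}] \cdot f \in \langle \kthree\text{-focals}\rangle$. After specializing to $\bar{\bA}$, the multiplier becomes the scalar $\det \bar{A}_i[:, \{i_1,i_2,i_3\}]$; since each $\bar{A}_i$ has rank $3$, at least one such $3\times 3$ minor is nonzero, and the specialized $4$-focal lies in the specialized $\kthree$-focal ideal immediately. No $\frakm_\pp$-saturation is needed, because the multiplier is a constant in $\CC[\pp]$.

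Your proposed Laplace-expansion construction, by contrast, produces an identity with factors in $\pp$ (the ``entries of $p_{i_4}$'' you mention), which forces you to invoke $\frakm_\pp$-saturation of the specialized $\kthree$-focal ideal. That claim is not available at this point: the only Gr\"obner basis in hand is for the full $\kfour$-focal ideal, not for the $\kthree$-focal subideal, so \Cref{lem:colon-wellsupported} does not apply directly, and asserting saturation amounts to assuming what you want to prove. The identity~\eqref{eq:sat-23-focal} is precisely what lets the paper avoid this circularity; you should use it instead of constructing a new relation.
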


The set-theoretic analogue of~\Cref{cor:2-3-focal} is a classical result of multiview geometry.
Indeed, it can be shown under even stronger genericity assumptions on $\bar{\bA}$ that just the $2$-focals cut out $\GammaAbarp^{m,1}$ for $m\ge 4$~\cite{HA97,DBLP:conf/iccv/TragerHP15}, although they do not generate $\Abarp^{m,1}.$

We observe that the $4$-focals, although needed for~\Cref{thm:AST-UGB}, are not needed for~\Cref{cor:2-3-focal}.
Indeed, it is a well-established principle in the vision literature that the $4$-focals ``do
not add more information"~\cite{DBLP:conf/iccv/FaugerasM95} and ``are always completely unnecessary"~\cite{DBLP:conf/iccv/TragerHP15}.
However, these statements apply only to the multiview ideal $\Abarp^{m,1}.$
By contrast, the ideal $\Ap^{m,1}$ is \emph{not} generated by only the $\kthree$-focals.

\begin{example}
 \label{ex:Ap-4}
We investigate $\Ap^{4,1} = \langle \kfour \text{-focals} \rangle$ using the computer algebra system \texttt{Macaulay2}~\cite{M2}.
The containment $\langle \kthree \text{-focals} \rangle \subsetneq \Ap^{4,1}$ is strict, since there is a nonzero remainder upon polynomial division of any $4$-focal $f$ by a Gr\"{o}bner basis for the ideal $\langle \kthree \text{-focals} \rangle$.
However, polynomial division also shows, whenever $i\in [m]$ and $1\le i_1 < i_2 < i_3 \le 4$, that
\begin{equation}\label{eq:sat-23-focal}
\det A_i [:, \{ i_1, i_2, i_3 \} ] \cdot  f \in \langle \kthree \text{-focals} \rangle .
\end{equation}
\eqref{eq:sat-23-focal} implies more generally that for any number of cameras $m$, $\Ap^{m,1}$ may be obtained from successive ideal quotients of $\langle \kthree \text{-focals} \rangle$ by certain subdeterminants of the symbolic camera matrices $A_1, \ldots , A_m.$ 
\end{example}

We reiterate that the $4$-focals are not needed to generate 
$\Abarp^{m,1}$, but they are needed to generate $\Ap^{m,1}$ where the 
cameras are symbolic. As~\Cref{ex:Ap-4} shows, this is because the open condition that each symbolic camera matrix has full rank is not enforced when we consider $\Ap^{m,1}$.
Moreover, for {\em any} camera arrangement $\bar{\bA},$~\Cref{ex:Ap-4} recovers the well-known fact that
\[
\langle \text{specialized } \kthree \text{-focals} \rangle
= 
\langle \text{specialized } \kfour \text{-focals} \rangle .
\]
since, for each $i$, specializing $\bA$ to $\bar{\bA}$ in each of the four polynomials $\det A_i [:, \{ i_1, i_2, i_3 \} ]$ yields four scalars, at least one of which is nonzero.
Bearing this fact in mind, we still work with the $4$-focals in the remainder of this section, as they are necessary for the Gr\"{o}bner-theoretic~\Cref{thm:AST-UGB} and in order to apply the results of the previous section.
Thus, we state the well-known set-theoretic analogue of~\Cref{cor:2-3-focal} as follows.

\begin{proposition}
  \label{prop:multiview-focals}
 If $\bar{\bA}$ is a camera arrangement with pairwise distinct centers, then 
  \[
  \GammaAbarp^{m,1} = \V \left( \langle \text{specialized } \kfour \text{-focals} \rangle \right).
  \]
\end{proposition}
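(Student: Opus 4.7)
The plan is to deduce this set-theoretic description as a direct corollary of~\Cref{prop:Ap-set} by slicing at the camera arrangement $\bar{\bA}$. First I would verify that the two relevant fibers over $\bar{\bA}$ under the projection $(\PP^{11})^m \times (\PP^2)^m \to (\PP^{11})^m$ correspond to the two sides of the claimed equality. On the geometric side, the definition of $\GammaAbarp^{m,1}$ in~\eqref{eq:GammaAbarp} together with $\GammaAp^{m,1} = \pi_\qq(\GammaAqp^{m,1})$ yields
\[
\GammaAbarp^{m,1} = \{\bar{\pp} : (\bar{\bA}, \bar{\pp}) \in \GammaAp^{m,1}\}.
\]
On the algebraic side, set-theoretic vanishing commutes trivially with specialization, so
\[
\V(\text{specialized } \kfour\text{-focals}) = \{\bar{\pp} : (\bar{\bA}, \bar{\pp}) \in \V(\kfour\text{-focals})\}.
\]

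Next I would invoke~\Cref{prop:Ap-set}, which gives the set-theoretic equality $\GammaAp^{m,1} = \V(\kfour\text{-focals})$ in the ambient product space. Comparing the two fibers at $\bar{\bA}$ immediately yields the desired equality $\GammaAbarp^{m,1} = \V(\text{specialized } \kfour\text{-focals})$.

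I do not expect any serious obstacle: the proof is essentially a matter of unwinding definitions once~\Cref{prop:Ap-set} is available, and no new geometric input is needed. I would note in passing that the pairwise distinct centers hypothesis is not strictly used in this slicing argument; it is included for parallelism with the ideal-theoretic~\Cref{cor:2-3-focal}, from which the proposition could alternatively be deduced via Hilbert's Nullstellensatz combined with the observation in~\Cref{ex:Ap-4} that $\langle \text{specialized } \kthree\text{-focals}\rangle$ and $\langle \text{specialized } \kfour\text{-focals}\rangle$ cut out the same variety whenever the cameras $\bar{A}_i$ have full rank (as is automatic when the centers are pairwise distinct).
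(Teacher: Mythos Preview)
Your slicing argument is formally correct: since $\GammaAbarp^{m,1}$ is defined in~\eqref{eq:GammaAbarp} as $\{\bar{\pp}:\exists\,\bar{q}\text{ with }(\bar{\bA},\bar{q},\bar{\pp})\in\GammaAqp^{m,1}\}$, which is exactly the fiber of $\GammaAp^{m,1}=\pi_\qq(\GammaAqp^{m,1})$ over $\bar{\bA}$, the equality with $\V(\text{specialized focals})$ follows immediately from~\Cref{prop:Ap-set}.

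The paper takes a genuinely different route. It proves directly that the \emph{multiview variety}---the closure of the image of $\varphi_{\bar{\bA}}$ in~\eqref{eq:imaging map}---coincides with $\V(\text{specialized focals})$. The hard inclusion is handled by a world-point perturbation: given $\bar{\pp}$ in the zero locus, one extracts a null vector $(-\bar{q},\lambda_1,\ldots,\lambda_m)$ of the specialized $m$-focal matrix, uses the distinct-centers hypothesis to ensure at most one $\lambda_i$ vanishes, and then perturbs $\bar{q}$ along the back-projected ray of that exceptional camera to exhibit $\bar{\pp}$ as a limit of honest images. This geometric technique is what the paper reuses in later results (e.g.~\Cref{thm:Abarqp-gb},~\Cref{thm:ba-set-theoretic}).

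Your approach buys brevity by leaning on the identification of $\GammaAbarp^{m,1}$ with the closure of $\operatorname{im}\varphi_{\bar{\bA}}$ asserted at the start of~\Cref{sec:Abarp}; the paper's approach is what actually \emph{justifies} that identification. This is also why your closing remark about the distinct-centers hypothesis being unnecessary is misleading: for the fiber statement it is indeed superfluous, but for the multiview variety it is essential. \Cref{ex:multiview-2-coincident} makes this concrete---with coincident centers the $2$-focal specializes to zero, so the fiber of $\GammaAp^{2,1}$ over $\bar{\bA}$ is all of $(\PP^2)^2$, whereas the closure of $\operatorname{im}\varphi_{\bar{\bA}}$ is only the diagonal $p_1\sim p_2$. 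The hypothesis is precisely what makes these two objects agree, and that agreement is the substantive content the paper's perturbation argument establishes.
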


We provide a complete, self-contained proof of \Cref{prop:multiview-focals}
since it is an important step in our proofs of~\Cref{thm:AST-UGB} and~\Cref{cor:2-3-focal}.
Also, we appeal to the geometric argument used in the proof when establishing various results later in the text (eg.~\Cref{thm:Abarqp-gb},~\Cref{thm:ba-set-theoretic}.) 

Before proceeding to the proofs, let us consider three simple camera arrangements and their multiview ideals.
For the first example, camera centers are distinct, so~\Cref{prop:multiview-focals} applies.

\begin{example}
\label{ex:multiview-2-generic}
For $m=2$ cameras with distinct centers, the multiview ideal is a principal ideal which can be obtained by specializing the sole $2$-focal generating $\Ap^{2,1}.$
The $\ZZ^2$-graded Hilbert series of $\Abarp^{2,1}$ in this case equals
\[
\frac{1-T_{\pp_1}T_{\pp_2}}{\left(1-T_{\pp_2}\right)^{3}\left(1-T_{\pp_1}\right)^{3}} = 
1+3\,T_{\pp_1}+3\,T_{\pp_2}+6\,T_{\pp_1}^{2}+8\,T_{\pp_1}T_{\pp_2}+6\,T_{\pp_2}^{2} + \ldots 
\]
Up to coordinate changes in the world and images, such an arrangement takes the form $\bar{\bA} = \left( \left(\BM I & 0 \EM\right), \left(\BM0 &  I  \EM\right)\right).$
Although generic in the sense of having distinct centers, this arrangement is special in the sense that its multiview ideal is \emph{toric}~\cite[Sec 4]{AST13}:
\[
\Abarp^{2,1} = \langle -p_1[2]p_2[2]+p_1[3]p_2[1] \rangle .
\]

\end{example}

The next two examples illustrate why the assumption of distinct centers cannot be relaxed in 
\Cref{prop:multiview-focals}. Namely, when camera centers coincide, the multiview ideal can contain polynomials which are not specializations of polynomials in $\Ap^{m,1}.$
See~\cite{APT21}[Example 3.10] for another instance of this phenomenon.

\begin{example}
\label{ex:multiview-2-coincident}
For $\bar{\bA} = \left( \left(\BM I & 0 \EM\right), \left(\BM I &  0  \EM\right)\right),$ the camera centers coincide.
Its multiview ideal is generated by the $2\times 2$ minors of the $3\times 2$ matrix $\left( \BM p_1 & p_2 \EM \right),$ with Hilbert series 
\[
\frac{1-3\,T_{\pp_1}T_{\pp_2}+T_{\pp_1}^{2}T_{\pp_2}+T_{\pp_1}T_{\pp_2}^{2}}{\left(1-T_{\pp_2}\right
      )^{3}\left(1-T_{\pp_1}\right)^{3}}
      =
      1+3\,T_{\pp_1}+3\,T_{\pp_2}+6\,T_{\pp_1}^{2}+6\,T_{\pp_1}T_{\pp_2}+6\,T_{\pp_2}^{2} + \ldots .
\]
In contrast to the previous example, specializing the $2$-focal generating $\Ap^{2,1}$ to $\bar{\bA}$ results in the zero polynomial. 
Thus, in general, $\Abarp^{m,n}$ cannot be obtained from $\Ap^{m,n}$ by specialization when the 
cameras in $\bar{\bA}$ do not have distinct centers.
\end{example}

\begin{example}
\label{ex:multiview-3-partially-coincident}
For the camera arrangement \[
\bar{\bA} = (\bar{A_1}, \bar{A_2}, \bar{A_3}) = \left( \left(\BM I & 0 \EM\right), \left(\BM I &  0  \EM\right), \left(\BM0 &  I  \EM\right) \right),
\]
the first two camera centers coincide.
The multiview ideal is
\begin{equation}
\begin{split}
\left\langle 
\det \left( \bar{\bA} \mid \pp \right) [1:3, 1:3]_{\{ 1, 3 \}}
,
\,
\det \left( \bar{\bA} \mid \pp \right) [1:3, 1:3]_{\{ 2, 3 \}}
\right\rangle +  \minors \left( 2, \left(\BM p_1 & p_2 \EM\right) \right).
\end{split}
\end{equation}
The first summand above is generated by specializations of the $2$-focals in $\Ap^{3,1}$ corresponding to the camera pairs $(\bar{A_1}, \bar{A_3})$ and $(\bar{A}_2, \bar{A_3})$ with distinct centers.
The second summand is the multiview ideal from the previous example.
Upon specializing $\Ap^{3,1}$ to this particular arrangement $\bar{\bA},$ the $2$-focal on $(A_1,A_2)$ and all $3$-focals in $\Ap^{3,1}$ become zero, in contrast to the case of a generic camera triple.
\end{example}

\Cref{cor:2-3-focal}~asserts that the failure to obtain $\Abarp^{m,1}$ by specialization from $\Ap^{m,1}$ occurs \emph{only} in examples with coincident centers like in~\Cref{ex:multiview-2-coincident} and~\Cref{ex:multiview-3-partially-coincident}.
We begin by proving the set-theoretic version.

\begin{proof}[Proof of~\Cref{prop:multiview-focals}]
The inclusion $\GammaAbarp^{m,1} \subset \V \left( \langle \text{specialized } \kfour \text{-focals} \rangle \right)$ 
follows along similar lines as the inclusion  $\GammaAp^{m,1}  \subseteq  \V (\kfour\text{-focals})$ in the proof of~\Cref{prop:Ap-set}. The proof of the reverse inclusion is analogous to the proof of  
$\V (m\text{-focals}) \subseteq \GammaAp^{m,1}$ in that we construct a sequence in $\GammaAbarp^{m,1}$ 
that limits to any given point in $\V \left( \langle \text{specialized } \kfour \text{-focals} \rangle \right).$
However, instead of perturbing cameras, we perturb the associated world point, as illustrated in~\Cref{fig:sequence-q}.
We note that a similar argument appears in the proof of~\cite[Proposition 1]{DBLP:conf/iccv/TragerHP15}.

Let $\bar{\bA} = (\bar{A}_1,\ldots , \bar{A}_m)$ be a camera arrangement with distinct centers.
Suppose $\bar{\pp}$ lies in the vanishing locus of the $\bar{\bA}$-specialized $\kfour$-focals.
Then $\bar{\pp}$ also lies in the vanishing locus of all $m$-focals
and the corresponding specialized $m$-focal matrix 
is rank-deficient, so we may consider an element in its nullspace as in~\eqref{eq:null}.
At most one of $\lambda_1, \ldots, \lambda_m$ in that null vector can be zero since if, say, $\lambda_1 = \lambda_2 =0,$ then $\bar{q}$ is the center of both $\bar{A_1}$ and $\bar{A_2}.$
If all $\lambda_i$ are nonzero, then $\bar{\pp}$ is in the image of the imaging map \eqref{eq:imaging map} and thus $\bar{\pp}\in \GammaAbarp^{m,1}$.
Otherwise, we may assume without loss of generality that $\lambda_1 = 0,$ so that $\bar{q}$ is the center of $\bar{A_1}.$
We now write $\bar{\pp}$ as the limit of a sequence of points in $\GammaAbarp^{m,1}$
as $n\to \infty$, showing that $\bar{\pp}\in \GammaAbarp^{m,1}$.
Let $\bar{q}'$ be such that $\bar{A_1} \bar{q} ' \sim \bar{p}_1,$ and, working affinely, let $\bar{q}^{(n)} = \bar{q} + (1/n) \bar{q} '$.
Set $\bar{\pp}^{(n)} = (\bar{A}_1 \bar{q}^{(n)}, \ldots , \bar{A}_m \bar{q}^{(n)}).$
Passing to a subsequence if necessary, we may assume that $\bar{A}_i \bar{q}^{(n)} \ne 0$ for all $i\ge 1$ and all $n\ge 1.$
Observe for $i=1$ and all $n\ge 1$ that
\[
\bar{A}_1 \bar{q}^{(n)} \sim \bar{A}_1 \bar{q}'
  \sim \bar{p}_1
  \]
For $i\ge 2,$ in the limit as $n \to \infty$, 
 \[
  \bar{A}_i \bar{q}^{(n)} = \bar{A}_i \bar{q} + (1/n) \bar{A}_i \bar{q}' \to \bar{A}_i \bar{q}
  \sim \bar{p}_i.
  \]
We have exhibited $\bar{\pp}$ as the limit of points in $ \GammaAbarp^{m,1}.$
Since $\GammaAbarp^{m,1}$ is closed in the Euclidean topology, this 
implies that $\bar{\pp} \in \GammaAbarp^{m,1},$ proving the desired inclusion.
\end{proof}

\begin{figure}
    \centering
 \def\planeshade{black!5} 
 \begin{tikzpicture}
 \coordinate (q) at (-1.5, -1.5, 0);
 \draw[fill] (q) circle (0.05) node[below] {\scriptsize $\bar{q}$};
 \coordinate (r1) at (1,-2,-2);
 \coordinate (r2) at (1,-2,2);
 \coordinate (r3) at (1,2,2);
 \coordinate (r4) at (1,2,-2); 
 \draw[fill=\planeshade] (r1) -- (r2)  -- (r3) --  (r4) node[right] {} --  cycle;
 \coordinate (ri) at (1.12,-.53,0);
 \draw[fill] (ri) circle (0.05) node[below] {\scriptsize $\bar{A}_{i} \, \bar{q}\sim \bar{p}_i$};
 \def\sone{.32};
 \coordinate (rm1) at (1.12-2.62*\sone,-.53-.97*\sone,0);
 \draw[thick, dashed, -] (q) -- (rm1);
 \def\stwo{-.62};
 \coordinate (rm2) at (1.12-2.62*\stwo,-.53-.97*\stwo,0);
 \draw[thick, dashed, ->] (ri) -- (rm2);
 
 \coordinate (l1) at (-3.5, 0, 3);
 \coordinate (l2) at (-3.5,3,0);
 \coordinate (l3) at (-3.5,0,-3);
 \coordinate (l4) at (-3.5,-3,0);
 \draw[fill=\planeshade] (l1) -- (l2)  -- (l3) --  (l4) node[right] {} -- cycle;
 \coordinate (qprime) at (-5, 0, 0);
 \draw[fill] (qprime) circle (0.05) node[below] {\scriptsize $\bar{q} '$};
 
 \def\sthree{.37};
 \coordinate (lm1) at (-5 + 3.5*\sthree, -1.5*\sthree, 0);
 \draw[thick, dashed, -] (qprime) -- (lm1);
 \draw[fill] (lm1) circle (0.05) node[below] {\scriptsize $ \bar{p}_1$};
 \def\sfour{.62};
 \coordinate (lm2) at (-5 + 3.5*\sfour, -1.5*\sfour, 0);
 \draw[thick, dashed, ->] (lm2) -- (q);
 \def\sfive{.81};
 \coordinate (qn) at (-5 + 3.5*\sfive, -1.5*\sfive, 0);
 \draw[fill] (qn) circle (0.05) node[below] {\scriptsize $ \bar{q}^{(n)}$};
 \def\ssix{.481};
 \coordinate (qn1) at (-2.165 + 4.9094* \ssix , -1.215 + 1.2864 * \ssix , 0);
 \draw[dashed, thick, -] (qn) -- (qn1);
 \def\sseven{.681};
 \coordinate (qn2) at (-2.165 + 4.9094* \sseven , -1.215 + 1.2864 * \sseven , 0);
  \def\seight{1};
 \coordinate (qn3) at (-2.165 + 4.9094* \seight , -1.215 + 1.2864 * \seight , 0);
 \draw[dashed, thick, -] (qn2) -- (qn3);
  \draw[fill] (qn2) circle (0.05) node[above] {\scriptsize $\bar{A}_{i} \, \bar{q}^{(n)}$};
 \end{tikzpicture}
    \caption{Proof of~\Cref{prop:multiview-focals}; the $\lambda_1 = 0$ case.}
    \label{fig:sequence-q}
\end{figure} 

\begin{proof}[Proof of~\Cref{thm:AST-UGB}]
Let $<$ be any product order on $\CC[\bA, \pp]$ with $\bA < \pp,$ formed from product orders $<_\pp  $ on $\CC [\pp]$ and $<_\bA$ on $\CC[\bA].$
We note the following identity:
\begin{equation}
\label{eq:elimination-order}
in_< \left( \displaystyle\sum_{\pp^{\alpha_1} \, <_\pp \,  \cdots \, <_\pp \, \pp^{\alpha_k} } g_{\alpha_i } (\bA) \pp^{\alpha_i}  \right) = in_{<_\bA} (g_{\alpha_k}(\bA)) \pp^{\alpha_k} .
\end{equation}
Using this identity and~\Cref{thm:Ap-GB}, application of Buchberger's S-pair criterion~\cite[\S 2.6]{CLO15} shows that the $\bar{\bA}$-specialized $\kfour$-focals form a Gr\"{o}bner basis with respect to $<_\pp $ as long as none of their leading coefficients $g_{\alpha_k} (\bar{\bA})$ vanish.
Each of these leading coefficients is a $4\times 4$ minor of $\stacked{\bar{A}_1}{\bar{A}_m}$ and by 
minor genericity, they do not vanish. 
Since $<_\pp$ is an arbitrary monomial order on $\CC [\pp ]$, the specialized $\kfour$-focals form a universal Gr\"{o}bner basis.
To show they generate $\Abarp^{m,1}$, apply the recognition criterion of~\Cref{prop:prop-recognition}.
The set-theoretic statement is just~\Cref{prop:multiview-focals}, and the remaining conditions follow as in~\Cref{thm:Ap-GB}.
\end{proof}

Combining the last example with our previous results allows us to recover the main ideal-theoretic result of~\cite{APT21}.

\begin{proof}[Proof of~\Cref{cor:2-3-focal}]
When $\bar{\bA}$ is minor generic, this follows by applying~\Cref{thm:AST-UGB} and specializing~\eqref{eq:sat-23-focal} for every $4$-focal $f.$
For $\bar{\bA}$ with pairwise distinct centers,~\Cref{thm:orbits} shows that there exists $h=(H_1,\ldots,H_m) \in 
(\PGL_3)^m$ such that $$h\bar{\bA} = (H_1 \bar{A}_1, \ldots, H_m \bar{A}_m)$$ is minor generic. Consider the automorphism of $\ZZ^m$-graded rings defined as 
\begin{align*}
L_{h} : \CC [\pp ] &\to \CC [\pp] \\
(p_1, \ldots , p_m ) &\mapsto (H_1 p_1, \ldots , H_m p_m).
\end{align*}
This allows us to reduce to the minor generic case:
\begin{align*}
\langle \kthree \text{-focals} |_{\bar{\bA}} \rangle &= L_h \left( \langle \kthree \text{-focals} |_{h\bar{\bA}} \rangle  \right) \tag{\cite[Lemma 2.3,  3.6]{APT21}}\\ 
&= L_h \left( \iideal{h\bar{\bA}, \pp}^{m,1}  \right) \tag{$h \bar{\bA}$ is minor generic}\\
&= \Abarp^{m,1}.
\end{align*}
In the above calculation, we note that the results used from~\cite{APT21} are proven by Cauchy-Binet type arguments analogous to the proof of~\Cref{thm:orbits}.
The final equality follows directly from the definition of $\Abarp^{m,1}.$
Since $\bar{\bA}$ was an arbitrary camera arrangement with pairwise distinct centers, the proof is complete.
\end{proof}

\section{$\Aqp^{m,1}$ : Constraints on cameras, world points, and image points for $n=1$}\label{sec:Aqp}

We now move on to $\Aqp^{m,1}$ for $m$ cameras and 
one world point. 
Recall that this is the vanishing ideal of 
the image formation correspondence $\GammaAqp^{m,1}$ \eqref{eq:GammaAqp}, and the second bundle adjustment ideal in~\Cref{fig:ideals}.

For fixed $i,$ the condition $A_i q \sim p_{i}$ is equivalent to the vanishing of all $2\times 2$ minors of the matrix $\left(\BM A_i q & p_{i } \EM\right)$. 
Thus, we may consider the ideal generated by all such minors, which we denote as:
\begin{equation}
\label{eq:minor-ideal}
\minorideal = \sum_{i=1}^m \textup{minors}\left(2, \left(\BM A_i q & p_i \EM\right)\right).
\end{equation}

Each summand, $\textup{minors}\left(2, \left(\BM A_i q & p_i \EM\right)\right)$, appearing in~\eqref{eq:minor-ideal} looks similar to a well-studied example in combinatorial commutative algebra, namely, the $2\times 2$ minors of a $3\times 2$ matrix of indeterminates, which form a quadratic universal Gr\"{o}bner basis by~\cite[Example 1.4]{Stu96}.
However, this analogy cannot be pursued too closely here, as the minors in~\eqref{eq:minor-ideal} are \emph{cubics} and the matrix entries are not indeterminates.

The ideal $\minorideal $ is a natural candidate for $\Aqp^{m,1}.$
However, in general $\minorideal \subsetneq \Aqp^{m,1}$.

\begin{example}
\label{ex:Aqp-1}
For $m=2$ cameras, we can easily check in \texttt{Macaulay2} that 
\[
\Aqp^{2,1} = M^{2,1}_{\bA , \qq , \pp} + \Ap^{2,1}
\supsetneq 
M^{2,1}_{\bA , \qq , \pp} = \Aqp^{2,1} \cap \frakm_\qq .
\]
\end{example}

In spite of~\Cref{ex:Aqp-1}, the ideals $\Aqp^{m,1}$ and $\minorideal$ are closely related. 
Our main result of this section, \Cref{thm:Aqp-master-n1} below,  establishes that $\Aqp^{m,1} = \minorideal + \Ap^{m,1}$.

\begin{theorem}The vanishing ideal $\Aqp^{m,1} $ of the 
variety $\GammaAqp^{m,1}$ may be described as follows:
  \label{thm:Aqp-master-n1}

For any $1\leq i \leq 4$,
\begin{equation}\label{eq:Aqp-sat}
\Aqp^{m,1} = \minorideal : q [i],
\end{equation}
 and
\begin{equation}\label{eq:Aqp-gen}
\Aqp^{m,1} = \minorideal + \Ap^{m,1}.
\end{equation}
Thus, $\Aqp^{m,1}$ is generated by the $2\times 2$ minors generating $\minorideal$ and the $\kfour$-focals.
  \end{theorem}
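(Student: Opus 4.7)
The plan is to apply the recognition criterion of \Cref{prop:prop-recognition} to the ideal
\[
J \;:=\; \minorideal + \langle \kfour\text{-focals} \rangle,
\]
showing simultaneously that $J = \Aqp^{m,1}$ and that $J = \minorideal : q[i]$ for each $i \in \{1,\ldots,4\}$; this will yield both \eqref{eq:Aqp-sat} and \eqref{eq:Aqp-gen}. Set-theoretic equality $\V(J) = \GammaAqp^{m,1}$ is the first step. The inclusion $\GammaAqp^{m,1} \subseteq \V(J)$ is clear from the defining equations. For the reverse, a point $(\bar{\bA}, \bar{q}, \bar{\pp}) \in \V(J)$ satisfies $(\bar{\bA}, \bar{\pp}) \in \GammaAp^{m,1}$ by \Cref{prop:Ap-set}, supplying some $\bar{q}'$ with $\bar{A}_i \bar{q}' \sim \bar{p}_i$, while vanishing of the $2\times 2$ minors generating $\minorideal$ forces $\bar{A}_i \bar{q} \sim \bar{p}_i$ whenever $\bar{A}_i \bar{q} \ne 0$. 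When $\bar{q}$ is the center of some $\bar{A}_i$, the perturbation $\bar{q}^{(n)} = \bar{q} + (1/n) \bar{q}'$, modeled on the argument in the proof of \Cref{prop:multiview-focals}, realizes $(\bar{\bA}, \bar{q}, \bar{\pp})$ as a Euclidean limit of points in $\GammaAqp^{m,1}$.

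The principal obstacle is the determinantal identity
\[
q[i] \cdot \det \kfoc \;\in\; \minorideal \qquad \text{for each } i \in \{1,\ldots,4\} \text{ and each } \kfour\text{-focal},
\]
which yields $\langle \kfour\text{-focals} \rangle \subseteq \minorideal : q[i]$ and hence $J \subseteq \minorideal : q[i]$. The intuition is that substituting $A_{\sigma_j} q$ for the $p_{\sigma_j}$-column in the focal matrix \eqref{eq:k-focal-alternate} makes it singular, since the vector $(q[1], q[2], q[3], q[4], -1, \ldots, -1)^\top$ lies in its kernel; multilinear expansion of $\det \kfoc$ in the entries of the $p_{\sigma_j}$-columns then produces a cofactor identity expressing $q[i] \cdot \det \kfoc$ as a $\CC[\bA, \qq, \pp]$-linear combination of the $2\times 2$ minors of $(A_{\sigma_j} q \mid p_{\sigma_j})$. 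Carrying this identity out uniformly and tracking signs across all $\kfour$-focals is the delicate calculation. For the reverse containment $\minorideal : q[i] \subseteq J$, I use that $\GammaAqp^{m,1}$ is the closure of a graph and hence irreducible, so $\Aqp^{m,1}$ is prime; thus $q[i]$ is not a zero divisor modulo $\Aqp^{m,1}$, giving $\minorideal : q[i] \subseteq \Aqp^{m,1} : q[i] = \Aqp^{m,1}$, and the recognition criterion then identifies this with $J$.

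To close the application of \Cref{prop:prop-recognition}, I still need saturation with respect to $\frakm_\bA \cap \frakm_\qq \cap \frakm_\pp$ and radicality of $J$. The colon identity above immediately gives $J : \frakm_\qq^\infty = J$. For $\frakm_\bA$ and $\frakm_\pp$, the generators of $\minorideal$ are well-supported in $\bA$ and $\pp$ in the sense of \Cref{def:well-supported}, so \Cref{lem:colon-wellsupported} transfers the saturation statements from the Gröbner basis of $\Ap^{m,1}$ established in \Cref{thm:Ap-GB} to $J$. Radicality will be certified by assembling a Gröbner basis of $J$ with squarefree initial terms under a product order with $\bA < \qq < \pp$, starting from the Gröbner basis for $\Ap^{m,1}$ of \Cref{thm:Ap-GB} and invoking \Cref{prop:radical}. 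Combining everything, \Cref{prop:prop-recognition} delivers $J = \Aqp^{m,1}$, which is \eqref{eq:Aqp-gen}, and the colon equality of the second paragraph upgrades to \eqref{eq:Aqp-sat}. The decisive difficulty throughout is the explicit determinantal identity expressing $q[i] \cdot (\text{$k$-focal})$ as a combination of the minors of $(A_j q \mid p_j)$; every saturation and radicality claim is built on it.
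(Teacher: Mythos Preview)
Your high-level framing matches the paper's: apply the recognition criterion of \Cref{prop:prop-recognition} to $J = \minorideal + \Ap^{m,1}$. But the proposal misidentifies where the real work lies, and the pieces you need for radicality and saturation are not supplied.

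\textbf{The missing Gr\"obner basis.} You assert that radicality ``will be certified by assembling a Gr\"obner basis of $J$ with squarefree initial terms \ldots\ starting from the Gr\"obner basis for $\Ap^{m,1}$,'' but no such basis is produced, and there is no mechanism by which the focal Gr\"obner basis of \Cref{thm:Ap-GB} yields one for $J$. In the paper this is precisely the computer-assisted \Cref{prop:MAqp-GB}: an explicit, rather intricate Gr\"obner basis $G_\minorideal$ of $\minorideal$ (listed in the appendix) is verified in \texttt{Macaulay2} for $m\le 8$, and extended to all $m$ by a finiteness argument. From $G_\minorideal$ one obtains $G_{\Aqp^{m,1}}$ via \Cref{lem:grevlex}, and \emph{all} subsequent claims---that $\minorideal : q[4] = \minorideal + \Ap^{m,1}$, that this ideal has a squarefree initial ideal (hence is radical), and that it is saturated in every variable---are read off from this basis. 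Without it, you have neither radicality nor saturation.

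\textbf{Circularity in saturation.} Your argument for $J:\frakm_\qq^\infty = J$ presupposes the equality $J = \minorideal:q[i]$, but at that stage you have only the inclusion $J \subseteq \minorideal:q[i]$; the reverse inclusion is deduced \emph{from} $J = \Aqp^{m,1}$, which is the conclusion of the recognition criterion you are trying to apply. Similarly, invoking \Cref{lem:colon-wellsupported} for $\frakm_\bA$ and $\frakm_\pp$ requires a Gr\"obner basis of $J$ whose elements are well-supported---and you have not produced one. In the paper these saturation statements (\Cref{cor:Aqp-GB}, \Cref{cor:Aqp-sat}) come directly from properties \textbf{P2}--\textbf{P5} of the computed basis $G_\minorideal$ together with the symmetry argument of \Cref{lem:symmetric-saturation}.

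\textbf{On the determinantal identity.} The statement $q[i]\cdot(\text{$k$-focal})\in \minorideal$ is true, but your sketch does not prove it: substituting $p_{\sigma_j}\mapsto A_{\sigma_j}q$ and expanding multilinearly expresses the focal as a $\CC[\bA,\pp]$-combination of the entries of $p_{\sigma_j}-A_{\sigma_j}q$, not of the $2\times 2$ minors $(A_{\sigma_j}q)[a]\,p_{\sigma_j}[b]-(A_{\sigma_j}q)[b]\,p_{\sigma_j}[a]$. The paper does not give a direct determinantal proof either; membership of the bumped focals in $\minorideal$ is part of property \textbf{P3}, verified by machine for $m\le 4$ and extended by \textbf{P1}. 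So this is not, as you suggest, ``the decisive difficulty''; the decisive difficulty is producing the Gr\"obner basis $G_\minorideal$, from which this and everything else follow.
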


Once again, we use the recognition criterion of~\Cref{prop:prop-recognition} to characterize $\Aqp^{m,1}.$
The set-theoretic equalities
\begin{equation}\label{eq:Aqp-set}
\GammaAqp^{m,1} = \V (\minorideal) = \V (\minorideal + \Ap^{m,1})
\end{equation}
follow along the same lines as in~\Cref{prop:Ap-set}.
To verify the remaining conditions in~\Cref{prop:prop-recognition}, we identify explicit Gr\"{o}bner bases with squarefree initial ideals.
At the core of our arguments is the following technical result, whose proof, unlike~\Cref{thm:Ap-GB}, is computer-assisted.

\begin{proposition}
  \label{prop:MAqp-GB}
  There exists a set $G_\minorideal$, which is a Gr\"{o}bner basis for $\minorideal$ with respect to all $12$ Lex or GRevLex orders $<$ satisfying
  \[
  A_m [3,4] < \ldots < A_1 [1,1] , \phantom{ff} 
  q[4] < \ldots < q[1] , \phantom{ff} 
  p_m [3] < \cdots < p_1 [1],
  \]
  where $<$ refines the column-major order on each matrix $A_i, q, p_i$ and the groups $\bA, \qq, \pp$ may be permuted in any of the six possible ways.
\end{proposition}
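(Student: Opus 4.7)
The plan is to identify an explicit set $G_\minorideal$ and verify Buchberger's S-pair criterion across all twelve monomial orders simultaneously. The key structural observation is that every generator of $\minorideal$ --- a $2\times 2$ minor of $\left(\BM A_iq &p_i \EM \right)$ --- is multilinear of tridegree $(1,1,1)$ in the variable blocks $\bA, \qq, \pp$. For multilinear polynomials in three blocks, the leading monomial under any Lex or GRevLex order refining a fixed block permutation is determined entirely by the largest appearing variable in each block, independently of the choice of Lex versus GRevLex. This uniformity is what makes it feasible for a single set $G_\minorideal$ to serve all twelve orders; effectively one only needs to verify the Gr\"{o}bner basis property once per permutation of $\{\bA, \qq, \pp\}$, not twelve times.

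First I would handle the single-camera case $m=1$ by computing a Gr\"{o}bner basis $G^{(1)}$ for $\minors(2, \left(\BM Aq &p \EM \right))$ in \texttt{Macaulay2} under each of the six block permutations. The three defining minors may already suffice, or a small number of additional S-polynomial reductions may need to be adjoined; the direct computation reveals the answer, and the resulting $G^{(1)}$ can be exhibited in closed form as a collection of multi-homogeneous polynomials of tridegree $(1,1,1)$ or higher.

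For general $m$, the natural candidate is $G_\minorideal = \bigcup_{i=1}^m G^{(i)}$, possibly augmented by reductions of cross S-pairs between generators arising from different cameras. Since $G^{(i)}$ and $G^{(j)}$ share only the $\qq$-variables when $i\ne j$, every cross S-pair cancels exclusively in the $\qq$-block, leaving a polynomial whose reduction I would analyze explicitly and whose form does not depend on the specifics of the Lex/GRevLex choice. Within each $G^{(i)}$, the S-pair analysis is already covered by the one-camera case.

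The main obstacle is the bookkeeping required to verify Buchberger's criterion across all block permutations. The multilinearity argument collapses the twelve orders to six, and the decomposition by camera index further localizes the required computations to a single camera plus a uniform treatment of cross-camera S-pairs. Even so, explicit verification is most reliably done by symbolic computation, which is why the proof is computer-assisted. The conceptual content of the proof is the uniformity argument based on multilinearity together with the block-wise structure of the generators; the remaining verification is a routine but substantial \texttt{Macaulay2} computation, whose output is then exhibited as the explicit $G_\minorideal$ advertised by the proposition.
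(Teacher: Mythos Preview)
Your proposal has a genuine structural gap: you substantially underestimate the cross-camera complexity of $G_\minorideal$. The actual Gr\"{o}bner basis (listed in the paper's appendix and summarized in \Cref{fig:Betti table}) contains elements of degree up to $9$ involving as many as \emph{four} cameras at once --- including, crucially, $q[4]$ times every $2$-, $3$-, and $4$-focal. Your plan to start from a single-camera basis $G^{(1)}$ and then treat cross-camera S-pairs as a mild perturbation (``cancels exclusively in the $\qq$-block'') does not survive contact with this: reducing an S-pair between degree-$3$ minors from cameras $i$ and $j$ produces the degree-$5$ elements; S-pairs among those and the degree-$4$ elements cascade to degree-$6$, $7$, $8$, $9$ elements involving $2$, $3$, and $4$ cameras. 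Consequently the correct ``base case'' for the S-pair argument is $m\le 8$ (two elements of $G_\minorideal$ can jointly involve up to $4+4$ cameras), not $m=1$; this is exactly what the paper verifies in \texttt{Macaulay2} before invoking \textbf{P1} to handle $m>8$.

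Your multilinearity argument also overreaches. It is true that the defining $2\times 2$ minors have tridegree $(1,1,1)$ and hence identical leading monomials under Lex and GRevLex for each block permutation. But the Gr\"{o}bner basis elements are \emph{not} multilinear (e.g.~the degree-$4$ elements are quadratic in $\bA$), so the ``largest variable in each block'' heuristic does not determine their leading terms, and you cannot conclude a priori that the twelve orders collapse to six. In fact the paper records (\textbf{P8}) that there are exactly four distinct initial ideals among the twelve --- a fact established by computation, not by the tridegree of the generators. The conceptual skeleton you propose (exhibit a candidate, verify Buchberger for small $m$, use a locality property to extend) is correct, but the candidate is much larger than you anticipate and the locality is four-cameras-per-element rather than one.
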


Elements of the set $G_\minorideal$ are listed explicitly in~\Cref{sec:GB-appendix} and their 
  degrees are tabulated in \Cref{fig:Betti table}.

\begin{proof}
To begin, we record several important properties of $G_\minorideal .$
\begin{itemize}
\item[\textbf{P1}] For $m\ge 4,$ 
\[
G_\minorideal = \displaystyle\bigcup_{\sigma \in \binom{[m]}{4}} G_\minorideal^\sigma ,
\]
where $G_\minorideal^\sigma = G_\minorideal \cap \CC [A_{\sigma_1}, A_{\sigma_2}, 
A_{\sigma_3}, A_{\sigma_4}, q, p_{\sigma_1}, p_{\sigma_2}, p_{\sigma_3}, p_{\sigma_4} ].$ 
\item[\textbf{P2}] $G_\minorideal$ contains all elements of the form $q[4]$ times a $2,3,$ or $4$-focal.
\item[\textbf{P3}] $\langle G_\minorideal \rangle = \minorideal $
\item[\textbf{P4}] Aside from the $q[4]$-bumped focals, no other elements of $G_\minorideal $ are divisible by any variable in $\CC[\bA, \qq, \pp ].$
\item[\textbf{P5}] The lead terms of all $g\in G_\minorideal$ are squarefree.
\end{itemize}

\textbf{P1} and \textbf{P2} follow upon inspecting the polynomials listed in~\Cref{sec:GB-appendix}, each of which depends on at most $4$ cameras and their corresponding image points.

For \textbf{P3}, the inclusion $\minorideal \subset \langle G_\minorideal \rangle $ holds since $G_\minorideal$ contains all the $3m$ cubic $2 \times 2$ minors generating $\minorideal .$
The reverse inclusion and the assertions \textbf{P4} and \textbf{P5} may all be verified for $m=1,\ldots , 4$ using \texttt{Macaulay2}, and for $m>4$ using \textbf{P1}. Here we are using the fact that 
$\minorideal$ is the sum of all minor ideals $M^{|\sigma|,1}_{\bA,\qq,\pp}$ as $\sigma$ 
varies over all 
$4$ element subsets of $[m]$. 

Having established \textbf{P1}--\textbf{P5}, \texttt{Macaulay2} verifies that $G_\minorideal$ is a Gr\"{o}bner basis for all $12$ term orders for each of the ``base cases" $m=1,\ldots , 8$ .
For $m>8,$ we use Buchberger's S-pair criterion: $G_\minorideal$ is a Gr\"{o}bner basis 
for the monomial order $<$ if and only if for any $f,g \in G_\minorideal ,$ the $S$-pair \[
S(f, g) = \lcm(in_< (f), in_< (g))  (f/in_<(f) - g / in_< (g))
\]
has zero remainder after division by $G_\minorideal .$
Consider any $f,g\in G_\minorideal$. By \textbf{P1},
we have $f\in G_\minorideal^{\sigma}$
and $g\in G_\minorideal^{\tau}$
for some $\sigma,\tau \in \binom{[m]}{4}.$
Thus
\[
S(f,g) \in \langle G_{\minorideal}^{\sigma} \cup G_{\minorideal}^{\tau} \rangle . 
\]
This S-pair involves at most $8$ cameras, so its remainder upon division by $G_\minorideal $ is zero by the base cases.
\end{proof}

In contrast to the set of $\kfour$-focals that form a Gr\"obner basis for 
$\Ap^{m,1},$ the Gr\"{o}bner basis $G_\minorideal$ is quite mysterious.
We list three additional properties which may provide further clues to its structure.

\begin{itemize}
\item[\textbf{P6}] The elements of $G_{\minorideal}$ are linear in the variable group $\qq,$ and may be partitioned according to their support as follows:
\begin{itemize}
\item[] Degree-$3$ elements have $\qq$-support $\{ q[1], q[2], q[3], q[4] \}$.
\item[] Degree-$4,5$ elements have $\qq$-support $\{ q[2], q[3], q[4] \}$.
\item[] Degree-$6,7$ elements which are not bumped $2$-focals have $\qq$-support $\{ q[3], q[4]\}$.
\item[] Bumped $\kfour$-focals have $\qq$-support $\{ q[4] \}.$
\end{itemize}
\item[\textbf{P7}] Viewed as polynomials in $\qq$ and $\pp ,$ the leading coefficient of any element of $G_\minorideal$ with respect to either of the induced Lex orders on $\CC[\bA] [\qq, \pp]$ is always a $k \times k$ minor of $\left(\BM {A}_1^\top & \cdots & {A}_m^\top \EM\right)$ for some $1\le k \le 4.$
\item[\textbf{P8}] With respect to the $12$ monomial orders in~\Cref{prop:MAqp-GB}, there are $4$ distinct initial ideals $in_< (\minorideal ),$ realized by letting $<$ be one of the two GRevLex orders with $\qq < \bA, \pp$ or one of the two Lex orders with $\qq > \bA , \pp .$
\end{itemize}

By~\Cref{prop:MAqp-GB}, it suffices to check these properties for $m=4$ cameras.
Property \textbf{P7} justifies the notion of ultra minor genericity in~\Cref{def:ultra-minor-generic}, which we make use of in~\Cref{sec:Abarqp}.

\begin{definition} \label{def:Aqp-GB}
Let $G_{\Aqp^{m,1}}$ be the set consisting of all $\kfour$-focals together with the elements of $G_\minorideal$ which are not bumped focals.
\end{definition}

Alternatively, $G_{\Aqp^{m,1}}$ consists of the elements of $G_\minorideal$ after bumping down
the elements of the form $q[4] \cdot \textup{$k$-focal}$ where $k=2,3,4$. 
See \Cref{fig:Betti table} for the degrees of elements in $G_{\Aqp^{m,1}}$. 

\begin{figure}
\begin{center}
\begin{tabular}{c|c|c}
\textup{degree} & $G_\minorideal$ & $G_{\Aqp^{m,1}}$\\ \hline 
$3$  & $3m$ & $3m$ \\
$4$  & $m$ & $m$ \\
$5$  & $9 \binom{m}{2}$ &  $9 \binom{m}{2}$\\
$6$  & $6\binom{m}{2}$ & $7\binom{m}{2}$\\
$7$  & $\binom{m}{2} + 27 \binom{m}{3}$ & $54 \binom{m}{3}$\\
$8$  & $27 \binom{m}{3}$ & $81 \binom{m}{4}$\\
$9$  & $81 \binom{m}{4}$ & $0$ 
\end{tabular}
\end{center} 
\caption{The number of generators of each degree in $G_\minorideal $ and $G_{\Aqp^{m,1}}$.
\label{fig:Betti table}}
\end{figure}

Our notation $G_{\Aqp^{m,1}}$ foreshadows the not-yet-proven fact that this set 
is a Gr\"{o}bner basis of $\Aqp^{m,1}.$ 
This will be established in the proof of of~\Cref{thm:Aqp-master-n1}. First we write down some 
corollaries to \Cref{prop:MAqp-GB}.

\begin{corollary}
\label{cor:Aqp-GB}
\begin{enumerate}
    \item $G_{\Aqp^{m,1}}$ is a Gr\"{o}bner basis for the ideal 
     \begin{equation} \label{eq:MAqp-saturated-q}
        \minorideal + \Ap^{m,1} = \minorideal :  q[4]
     \end{equation}
    with respect to any of the 12 monomial orders defined in~\Cref{prop:MAqp-GB}.
    \item Moreover,
\begin{equation}
\label{eq:MAqp-saturated-Ap}
\minorideal + \Ap^{m,1} = \left( \minorideal + \Ap^{m,1} \right) : A_m [3,4] = \left( \minorideal + \Ap^{m,1} \right) : p_m [3] .
\end{equation}
\end{enumerate}
\end{corollary}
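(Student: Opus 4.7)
The plan is to bootstrap from \Cref{prop:MAqp-GB}: applying \Cref{lem:grevlex} to $G_\minorideal$ yields a Gr\"obner basis for the ideal quotient $\minorideal : q[4]$, which I will then identify with $\minorideal + \Ap^{m,1}$ via \Cref{thm:Ap-GB}. Part (2) will follow from a second invocation of \Cref{lem:grevlex}.

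First, I would observe that $\langle G_{\Aqp^{m,1}} \rangle = \minorideal + \Ap^{m,1}$ directly from the construction: non-bumped elements of $G_\minorideal$ lie in $\minorideal$; each bumped focal $q[4] \cdot f \in G_\minorideal$ is $q[4]$ times an element $f \in G_{\Aqp^{m,1}}$, so in particular $\minorideal \subset \langle G_{\Aqp^{m,1}} \rangle$; and the $\kfour$-focals generate $\Ap^{m,1}$ by \Cref{thm:Ap-GB}. Next, I would apply \Cref{lem:grevlex} to $G_\minorideal$ with respect to a GRevLex order in which $q[4]$ is the globally cheapest variable, which exists among the 12 (take any grouping with $\qq$ the cheapest group). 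By Property \textbf{P4}, $q[4]$ divides precisely the bumped focals of $G_\minorideal$, so dividing them out recovers exactly $G_{\Aqp^{m,1}}$. The lemma produces $G_{\Aqp^{m,1}}$ as a Gr\"obner basis for $\minorideal : q[4]$, and combining with the identification above yields the ideal equality in \eqref{eq:MAqp-saturated-q}.

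To upgrade this to all 12 orders, I would combine three arguments. GRevLex orders with $\qq$ cheapest are already covered. For Lex orders with $\qq$ most expensive, \Cref{lem:colon-wellsupported} applies with $\xx_1 = \qq$ and $x_1 = q[4]$, since elements of $G_\minorideal$ are linear in $\qq$ by Property \textbf{P6} and well-supportedness on a single group is automatic. For the remaining orders I would use Buchberger's S-pair criterion: at any step of the known reduction $S(g_1, g_2) \to 0$ modulo $G_\minorideal$ that uses a bumped focal $q[4] \cdot h$ to cancel a term $c\,m$, the subtraction $c\,(m / (q[4]\, in_<(h)))\,(q[4] h)$ coincides with the subtraction $c\,(m / in_<(h))\,h$ one would use reducing modulo $h \in G_{\Aqp^{m,1}}$, so the reduction descends verbatim. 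S-pairs between a non-bumped $g$ and a $\kfour$-focal $f$ are handled via the identity $S(g, q[4] f) = q[4]\, S(g, f)$, valid when $q[4] \nmid in_<(g)$, by tracking how the $q[4]$ factor propagates through the reduction modulo $G_\minorideal$.

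For Part (2), I would apply \Cref{lem:grevlex} a second time, now to $G_{\Aqp^{m,1}}$ with respect to GRevLex orders having $A_m[3,4]$ or $p_m[3]$ as cheapest variable. No element of $G_{\Aqp^{m,1}}$ is divisible by these variables---by \textbf{P4} for the non-bumped elements and, for the $\kfour$-focals, by their multilinearity in each $p_i$ together with the analogous non-divisibility by single $\bA$-variables. \Cref{lem:grevlex} then gives $\langle G_{\Aqp^{m,1}} \rangle = \langle G_{\Aqp^{m,1}} \rangle : A_m[3,4] = \langle G_{\Aqp^{m,1}} \rangle : p_m[3]$, which is \eqref{eq:MAqp-saturated-Ap}. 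The main obstacle I anticipate is the third prong of the Gr\"obner basis argument for the monomial orders not directly covered by \Cref{lem:grevlex} or \Cref{lem:colon-wellsupported}: the S-pair translation relies on $q[4]$ not appearing in the leading monomial of any non-bumped element of $G_\minorideal$, a claim that is controlled by \textbf{P6} but whose verification across the remaining orders may be somewhat intricate.
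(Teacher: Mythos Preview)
Your treatment of the four ``nice'' orders (the two GRevLex orders with $\qq$ cheapest and the two Lex orders with $\qq$ most expensive) and of Part~(2) matches the paper's proof essentially line for line: both invoke \Cref{lem:grevlex} for the GRevLex cases, \Cref{lem:colon-wellsupported} for the Lex cases (using linearity in $\qq$ from \textbf{P6}), and finish Part~(2) by observing that no element of $G_{\Aqp^{m,1}}$ is divisible by $A_m[3,4]$ or $p_m[3]$.

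The divergence is in the remaining eight orders. You propose tracking S-pair reductions directly, factoring $q[4]$ through each step; you correctly identify that this hinges on $q[4]\nmid in_<(g)$ for every non-bumped $g\in G_\minorideal$ and every order, and you flag this as an unresolved obstacle. The paper avoids this issue entirely by using property~\textbf{P8}, which you do not mention: among the twelve orders there are only four distinct initial ideals of $\minorideal$, each realized by one of the four orders already handled. For any remaining order $<'$ the paper picks a handled order $<$ with $in_{<'}(\minorideal)=in_<(\minorideal)$ and runs the chain
\[
in_{<'}\langle G_{\Aqp^{m,1}}\rangle \;=\; in_{<'}(\minorideal:q[4]) \;\subset\; \bigl(in_{<'}(\minorideal)\bigr):q[4] \;=\; \bigl(in_<(\minorideal)\bigr):q[4] \;=\; in_<\langle G_{\Aqp^{m,1}}\rangle,
\]
then appeals to incomparability of distinct initial ideals of a fixed ideal to force equality. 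This requires no per-order verification of leading-term divisibility and no S-pair bookkeeping; it rests solely on the computationally verified \textbf{P8}. Your S-pair route may be completable, but as written it leaves exactly the gap you anticipate, whereas the paper's \textbf{P8}-based route closes it without further work.
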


\begin{proof}
Let $<$ be one of the two GRevLex orders appearing in property \textbf{P8} of $G_\minorideal .$ 
Since $G_\minorideal$ is a Gr\"obner basis for $\minorideal$ with respect to $<$ by \Cref{prop:MAqp-GB}, combining the definition 
of $G_{\Aqp^{m,1}}$ and \Cref{lem:grevlex} says that $G_{\Aqp^{m,1}}$ is a Gr\"obner basis 
of $\minorideal :  q[4]$ with respect to $<$. On the other hand, 
the ideal generated by $G_{\Aqp^{m,1}}$ is precisely $\minorideal + \Ap^{m,1}$. Indeed, 
by the construction in \Cref{def:Aqp-GB}, $\langle G_{\Aqp^{m,1}} \rangle$ contains 
$\minorideal + \Ap^{m,1}$, while every element of $G_{\Aqp^{m,1}}$ lies in $\minorideal + \Ap^{m,1}$. 
Therefore, we get that $ \minorideal + \Ap^{m,1} = \minorideal :  q[4]$. 

Now suppose $<$ is one of the two Lex orders appearing in \textbf{P8}.
Using \textbf{P6}, the elements of $G_\minorideal $ are well-supported with respect to $\qq ,$ and hence we may apply~\Cref{lem:colon-wellsupported} to conclude that $G_{\Aqp^{m,1}}$ is a Gr\"obner basis 
of $\minorideal :  q[4]$ with respect to $<$.

Next we argue that $G_{\Aqp^{m,1}}$ is a Gr\"obner basis for $\minorideal + \Ap^{m,1}$ with respect to any of the remaining $8$ orders considered in \Cref{prop:MAqp-GB}. 
Letting $<'$ be such an order, \textbf{P8} lets us choose $<$ from the $4$ previously considered orders such that
\begin{equation}\label{eq:equal-inI}
in_{<'} (\minorideal) 
= in_{<} (\minorideal ).
\end{equation}
We may apply either \Cref{lem:grevlex} when $<$ is a GRevLex order or \Cref{lem:colon-wellsupported} when $<$ is a Lex order to the monomial ideal $in_< (\minorideal ).$ 
In either case this gives 
\begin{align} \label{eq:initial equality}
(in_< (\minorideal )) : q[4]  = \langle in_< (G_{\Aqp^{m,1}}) \rangle,
\end{align}
where $in_< (G_{\Aqp^{m,1}})$ is the set of leading terms of elements in $G_{\Aqp^{m,1}}$. 
Thus,
\begin{align*}
 in_{<'} \langle G_{\Aqp^{m,1}} \rangle & = in_{<'}(\minorideal : q[4]) \tag{$\langle G_{\Aqp^{m,1}} \rangle = \minorideal : q[4]$}\\ 
&\subset (in_{<'} (\minorideal)) : q[4] \\
&= (in_{<} (\minorideal )) :  q[4] \tag{Equation~\eqref{eq:equal-inI}}\\
&= \langle in_{<} (G_{\Aqp^{m,1}}) \rangle \tag{Equation~\eqref{eq:initial equality}}\\
&=  in_{<} \langle G_{\Aqp^{m,1}} \rangle \tag{$G_{\Aqp^{m,1}}$ is a Gr\"{o}bner basis with respect to $<$.}
\end{align*}    
The inclusion $\subset$ follows from the definition of an ideal quotient.
Since distinct initial ideals are incomparable with respect to inclusion, we conclude
\[
in_{<'} (G_\Aqp^{m,1}) = in_{<} (G_\Aqp^{m,1}),
\]
showing that $G_\Aqp^{m,1}$ is a Gr\"{o}bner basis with respect to all $12$ term orders.

Finally, we establish~\eqref{eq:MAqp-saturated-Ap}. Note that since no polynomial in 
$G_{\Aqp^{m,1}}$ has $A_m[3,4]$, $p_m[3]$ or $q[4]$ as a factor,  by \Cref{lem:grevlex}, $G_{\Aqp^{m,1}}$ is a 
GRevLex Gr\"obner basis of $(\minorideal + \Ap^{m,1}) : A_m[3,4]$,  $(\minorideal + \Ap^{m,1}) : q[4]$ and $(\minorideal + \Ap^{m,1}) : p_m[3]$.
Hence, all three of these ideal quotients equal the ideal generated by $G_\Aqp^{m,1}$ which 
is $ \minorideal + \Ap^{m,1}$. 
\end{proof}

To complete the proof of~\Cref{thm:Aqp-master-n1}, we need a basic lemma on the quotient by some variable of a polynomial ideal that is invariant under the action of a permutation group.
\begin{lemma}
\label{lem:symmetric-saturation}
Let $P \subset S_k$ be a permutation group with its standard action on $\CC [x_1, \ldots , x_k].$
Suppose $I \subset \CC [x_1, \ldots , x_k]$ is an ideal which is $P$-invariant, meaning, 
$P \cdot I = I$. 
Then
\[
I = I : x_i 
\phantom{ff}
\Rightarrow 
\phantom{ff}
I = I :  x_{\rho (i)} 
\phantom{ff}
\forall \rho \in P.
\]
\end{lemma}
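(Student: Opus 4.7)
The plan is to exploit the fact that the permutation group $P$ acts on $\CC[x_1,\ldots,x_k]$ by $\CC$-algebra automorphisms. Concretely, each $\rho \in P$ extends to an automorphism $\rho \colon \CC[x_1,\ldots,x_k] \to \CC[x_1,\ldots,x_k]$ sending $x_j \mapsto x_{\rho(j)}$, and the $P$-invariance hypothesis $P \cdot I = I$ means exactly that $\rho(I) = I$ for every $\rho \in P$.

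The key observation I would isolate (as a one-line formal check) is the compatibility of ring automorphisms with ideal quotients: for any ring automorphism $\rho$, any ideal $J$, and any element $f$, one has
\[
\rho(J : f) \;=\; \rho(J) : \rho(f).
\]
This follows directly from unwinding definitions: $g \in \rho(J:f)$ iff $\rho^{-1}(g) f \in J$ iff $g \cdot \rho(f) \in \rho(J)$.

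Given this identity, the proof is immediate. Fix $\rho \in P$. Applying the identity with $J = I$ and $f = x_i$, and using $\rho(x_i) = x_{\rho(i)}$ together with $\rho(I) = I$, we obtain
\[
I : x_{\rho(i)} \;=\; \rho(I) : \rho(x_i) \;=\; \rho(I : x_i).
\]
Under the hypothesis $I = I : x_i$, the right-hand side is $\rho(I) = I$, so $I : x_{\rho(i)} = I$, as desired.

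I do not anticipate any real obstacle here; the content of the lemma is simply that ideal quotients are natural with respect to the $P$-action, so any property of $I$ expressible via $I : x_i$ automatically transfers across the $P$-orbit of $i$.
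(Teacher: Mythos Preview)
Your proof is correct and takes essentially the same approach as the paper: both arguments use that the $P$-action is by ring automorphisms which commute with taking ideal quotients, so $I:x_{\rho(i)} = \rho(I:x_i) = \rho(I) = I$. The paper simply carries out the same computation elementwise rather than first isolating the general identity $\rho(J:f) = \rho(J):\rho(f)$.
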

\begin{proof}
Suppose $f\in I : x_{\rho (i)}  $. 
Equivalently,
\[
(\rho \cdot x_i) f = x_{\rho( i)} f \in I.
\]
We need to show $f\in I$ as well.
Note that 
\[
\rho^{-1} \cdot (x_{\rho (i)} f) = x_i (\rho^{-1} \cdot f)
\in P \cdot I = I.
\]
This gives 
\[
\rho^{-1} \cdot f \in I : x_i = I.
\]
Thus,
\[
f = \rho \cdot (\rho^{-1} \cdot f) 
\in P \cdot I = I.
\]
\end{proof}

\begin{corollary}
\label{cor:Aqp-sat}
The following identities hold for any $i\in [m], j \in [3], k \in [4]$.
\begin{align*}
\minorideal : q[4]  &= \minorideal : q[k],  \\
\minorideal : A_m [3,4] &= \minorideal : A_i [j,k],\\
\minorideal : p_m [3] &= \minorideal : p_i [j].
\end{align*}
\end{corollary}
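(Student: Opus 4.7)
The plan is to deduce all three identities from the symmetry of $\minorideal$ under a large group of $\CC$-algebra automorphisms of $\CC[\bA, \qq, \pp]$, combined with the two ``base-case'' identifications $\minorideal : q[4] = I' := \minorideal + \Ap^{m,1}$ (from \Cref{cor:Aqp-GB}) and $\minorideal : A_m[3,4] = \minorideal = \minorideal : p_m[3]$, which I would obtain from \Cref{lem:grevlex}.

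First, I would construct the symmetry group $G \subset \mathrm{Aut}_\CC \, \CC[\bA, \qq, \pp]$, generated by (a) permutations of the camera pairs $(A_i, p_i)$, (b) for each $i$, simultaneous permutations of the rows of $A_i$ and the entries of $p_i$, and (c) simultaneous permutations of the columns of all $A_i$'s together with the entries of $q$. The coupling in (b) and (c) is chosen so that each generator acts on every defining matrix $(A_i q \mid p_i)$ by row and column operations (leaving the polynomial vector $A_i q$ invariant), which makes $\minorideal$ $G$-invariant. Each generator also acts on the $k$-focal matrix~\eqref{eq:k-focal-matrix} by row and column operations, so $\Ap^{m,1}$ and hence $I'$ are $G$-invariant as well. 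Moreover, $G$ acts transitively on each of the variable sets $\{q[k]\}$, $\{A_i[j,k]\}$, and $\{p_i[j]\}$.

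Second, I would establish the two base-case identifications. The first, $\minorideal : q[4] = I'$, is \Cref{cor:Aqp-GB}. For $\minorideal : A_m[3,4] = \minorideal$, I would pick a GRevLex term order from \Cref{prop:MAqp-GB} with $\bA$ the cheapest variable group, so that $A_m[3,4]$ is the cheapest variable overall; property \textbf{P4} in the proof of \Cref{prop:MAqp-GB} says no element of $G_\minorideal$ is divisible by any variable except for the $q[4]$-bumped focals, so in particular none is divisible by $A_m[3,4]$, and \Cref{lem:grevlex} then yields $\minorideal : A_m[3,4] = \langle G_\minorideal \rangle = \minorideal$. The analogous argument with $\pp$ the cheapest group (making $p_m[3]$ the cheapest variable) gives $\minorideal : p_m[3] = \minorideal$.

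Third, I would transport each base-case identification along its $G$-orbit. The key observation is: if $J, K$ are ideals both fixed by some $\rho \in G$, and $J : x = K$ for some variable $x$, then $J : \rho(x) = \rho(J) : \rho(x) = \rho(J : x) = \rho(K) = K$. Applying this with $(J, K, x) = (\minorideal, I', q[4])$ yields $\minorideal : q[k] = I' = \minorideal : q[4]$ for all $k$; with $(\minorideal, \minorideal, A_m[3,4])$ yields $\minorideal : A_i[j,k] = \minorideal = \minorideal : A_m[3,4]$ for all $(i,j,k)$; and with $(\minorideal, \minorideal, p_m[3])$ yields $\minorideal : p_i[j] = \minorideal = \minorideal : p_m[3]$ for all $(i,j)$. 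The main subtlety I anticipate is the coupling built into the generators of $G$---the simultaneous permutation of columns of $\bA$ with entries of $\qq$, and analogously for rows of each $A_i$ with entries of $p_i$, is essential for $\minorideal$-invariance---and verifying $G$-invariance of $\Ap^{m,1}$ requires tracking what these coupled actions do to the $k$-focal matrix. Once the group action is correctly set up, the three identities follow quickly, essentially packaging the existing Gr\"{o}bner basis technology (\Cref{prop:MAqp-GB}, \Cref{cor:Aqp-GB}) through \Cref{lem:symmetric-saturation}.
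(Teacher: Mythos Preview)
Your proposal is correct and follows the same approach as the paper: both construct the identical symmetry group (camera permutations, coupled row/image-point permutations, coupled column/world-point permutations), observe that $\minorideal$ is invariant, and transport base-case colon identities along orbits. Your treatment is in fact more careful than the paper's in two places. First, you explicitly establish the base cases $\minorideal : A_m[3,4] = \minorideal$ and $\minorideal : p_m[3] = \minorideal$ from property \textbf{P4} and \Cref{lem:grevlex}; the paper invokes \Cref{lem:symmetric-saturation} without stating these. Second, for the first identity you correctly use the more general principle $\rho(J:x) = J : \rho(x)$ together with invariance of the target ideal $I' = \minorideal + \Ap^{m,1}$, since here $\minorideal : q[4] \ne \minorideal$ and \Cref{lem:symmetric-saturation} as literally stated does not apply; the paper glosses over this point.
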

\begin{proof}
We deduce each identity from~\Cref{lem:symmetric-saturation} by producing a suitable permutation $\rho \in P$, where the group $P \subset S_{4 + 15m}$ is generated by all
permutations taking one of three forms:

\begin{enumerate}
\item Camera permutations, given by some $\rho \in S_m$ which sends $A_i$ to $A_{\rho (i)}$
and $p_i$ to $p_{\rho (i)}$.
\item World coordinate permutations, given by some $\rho \in S_4$ which sends $q[k]$ to $q[\rho(k)]$ and sends $A_i[:,k]$ to $A_i[:,\rho (k)].$
\item Image coordinate permutations, given by $\rho \in S_3$ which sends the $j$th coordinate of $p_i [j]$ to $p_i [\rho (j)]$ and $A_i [j, :]$ to $A_i [\rho (j) , :].$ 
\end{enumerate}
Note that $\minorideal$ is invariant 
under the action of $P$.
To deduce the first identity in the statement of the Corollary, apply a permutation of the form 2 above.
For the second identity, apply 2, then 3, then 1.
For the third identity, apply 2, then 1.
\end{proof}

\begin{proof}[Proof of~\Cref{thm:Aqp-master-n1}]
By \Cref{cor:Aqp-GB}, $G_{\Aqp^{m,1}}$ is a Gr\"{o}bner basis for $\minorideal + \Ap^{m,1}$ whose 
initial monomials are squarefree. Therefore, $\minorideal + \Ap^{m,1}$ is a radical ideal by 
\Cref{prop:radical}.
By~\Cref{cor:Aqp-GB} and~\Cref{cor:Aqp-sat}, this ideal is saturated with respect to all variables.
This gives us both~\eqref{eq:Aqp-sat} and that $$\minorideal + \Ap^{m,1}$$ is saturated with respect to the irrelevant ideal.
Equation~\eqref{eq:Aqp-set} gives the set-theoretic statement $\GammaAqp^{m,1} = \V  (\minorideal) = \V (\minorideal + \Ap^{m,1})$.
Thus, \eqref{eq:Aqp-gen}, which states that \[
\minorideal + \Ap^{m,1} = \Aqp^{m,1},
\]
now follows from~\Cref{prop:prop-recognition}.
\end{proof}

\section{$\Abarqp^{m,1}$: Completing the square for $n=1$}\label{sec:Abarqp}
Just as our results on the ideal $\Ap^{m,1}$ allow us to recover known results about the multiview ideal $\Abarp^{m,1} ,$ we may combine the results of the previous section with specialization arguments to study $\Abarqp^{m,1} .$ 
This ideal is an analogue of the multiview ideal that does not eliminate the world point variables $\qq$. 
It is the final ideal in the square seen in \Cref{fig:ideals}.
The next two theorems characterize $\Abarqp^{m,1}$ Gr\"{o}bner-theoretically and ideal-theoretically.

\begin{theorem}
\label{thm:Abarqp-gb}
 Specializing the Gr\"{o}bner basis $G_{\Aqp^{m,1}}$ to an ultra minor generic camera arrangement $\bar{\bA}$ yields a (non-reduced) Gr\"{o}bner basis for $\Abarqp^{m,1}$ with respect to the restrictions of Lex or GRevLex orders to $\CC [\qq, \pp]$ such that $\bA> \qq, \pp .$
\end{theorem}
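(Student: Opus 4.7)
The plan is to mimic the proof of \Cref{thm:AST-UGB} for the multiview ideal $\Abarp^{m,1}$: specialize the Gröbner basis $G_{\Aqp^{m,1}}$ from \Cref{cor:Aqp-GB} at $\bar{\bA}$, and use property \textbf{P7} together with ultra minor genericity to show that leading terms survive specialization.

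Let $<$ be any Lex or GRevLex order on $\CC[\bA, \qq, \pp]$ with $\bA > \qq, \pp$ considered in \Cref{cor:Aqp-GB}, and let $<'$ denote its restriction to $\CC[\qq, \pp]$. View each $g \in G_{\Aqp^{m,1}}$ as a polynomial in $\qq, \pp$ with coefficients in $\CC[\bA]$. By property \textbf{P7} --- together with its natural extension to the $\kfour$-focals in $G_{\Aqp^{m,1}}$, whose leading $\pp$-coefficients are $4\times 4$ minors of $\stacked{A_1}{A_m}$ via the proof of \Cref{thm:Ap-GB} --- the coefficient of the $<'$-leading $\qq, \pp$-monomial of $g$ is a $k\times k$ minor of $\stacked{A_1}{A_m}$ for some $1 \le k \le 4$. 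Because $\bar{\bA}$ is ultra minor generic, these minors do not vanish upon specialization, so $g|_{\bar{\bA}}$ has the same $<'$-leading $\qq, \pp$-monomial as $g$ up to a nonzero scalar.

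With this in hand, Buchberger's S-pair criterion on $\CC[\qq, \pp]$ proves that $\bar{G} := \{g|_{\bar{\bA}} : g \in G_{\Aqp^{m,1}}\}$ is a Gröbner basis of $\langle \bar{G} \rangle$ with respect to $<'$: S-pair reductions of $G_{\Aqp^{m,1}}$ in $\CC[\bA, \qq, \pp]$ (which all vanish by \Cref{cor:Aqp-GB}) specialize, via an analogue of~\eqref{eq:elimination-order}, to S-pair reductions in $\CC[\qq, \pp]$ of the corresponding specialized polynomials. To identify $\langle \bar{G} \rangle$ with $\Abarqp^{m,1}$, I would apply the recognition criterion \Cref{prop:prop-recognition}: set-theoretic equality comes from specializing~\eqref{eq:Aqp-set} along with the world-point perturbation argument of \Cref{prop:multiview-focals} (cf.~\Cref{fig:sequence-q}); radicality follows from the squarefreeness of initial terms via \Cref{prop:radical}; and saturation with respect to the irrelevant ideal follows from \Cref{lem:grevlex} in conjunction with \Cref{lem:symmetric-saturation}.

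The principal obstacle I anticipate is ensuring that the specialization of S-pair reductions in $\CC[\bA, \qq, \pp]$ to $\CC[\qq, \pp]$ proceeds without incident --- specifically, that the $\CC[\bA]$-denominators arising from inverting leading coefficients during the division algorithm remain nonzero after evaluating at $\bar{\bA}$. The ultra minor genericity hypothesis is precisely what guarantees this, since all such denominators are (by \textbf{P7} and its $\kfour$-focal analogue) products of the nonvanishing $k \times k$ minors of $\stacked{\bar{A}_1}{\bar{A}_m}$.
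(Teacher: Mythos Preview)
Your proposal is correct and follows the paper's own argument closely. One minor difference worth flagging: the paper treats the two Lex orders (which are genuine product orders) via the direct S-pair specialization you describe, but handles the GRevLex orders separately---by observing that the Lex and GRevLex leading $(\qq,\pp)$-terms of the specialized basis elements coincide, and then comparing initial ideals as in the proof of \Cref{cor:Aqp-GB}. Your uniform S-pair approach implicitly needs this same coincidence of leading terms to go through for GRevLex, since \textbf{P7} is stated only for the induced Lex orders on $\CC[\bA][\qq,\pp]$ and the analogue of~\eqref{eq:elimination-order} is a product-order identity; so the two routes amount to the same thing.
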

\begin{proof}
The strategy is much like the proof of~\Cref{thm:AST-UGB}.
Property \textbf{P7} of $G_\Aqp^{m,1}$ implies that the leading coefficients of the elements of $G_\Aqp^{m,1}$ in $\CC[\bA]$ are nonzero after specialization to an ultra minor generic camera arrangement $\bar{\bA}$.
Since the Lex orders are product orders,
the specialized Gr\"{o}bner basis elements $G_\Abarqp^{m,1}$ form a Gr\"{o}bner basis for both Lex orders in the statement of the theorem.
Moreover, the leading terms of $G_\Abarqp^{m,1}$ with respect to either Lex order are the same for the corresponding GRevLex order. Comparing leading terms as in \Cref{cor:Aqp-GB} proves that $G_\Abarqp^{m,1}$ forms a Gr\"{o}bner basis for the ideal it generates.
Finally, to prove that this ideal is $\Abarqp^{m,1}$, apply the recognition criterion of~\Cref{prop:prop-recognition} in a manner analogous to the proof of~\Cref{thm:Aqp-master-n1}.
\end{proof}

\begin{theorem}\label{thm:Abarqp-ideal}
If $\bar{\bA}$ has pairwise distinct centers, then specializing the $2\times 2$ minors of $\BM( A_i q & p_i\EM)$  and $\kthree$-focals yields generators for $\Abarqp^{m,1}$. In particular,
    \begin{equation}
    \label{eq:Abarqp-sum}
    \Abarqp^{m,1}  = M^{m,1}_{\bar{\bA}, \qq, \pp} + \Abarp^{m,1}.
    \end{equation}
\end{theorem}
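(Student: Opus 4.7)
My plan is to mimic the two-step approach used for~\Cref{cor:2-3-focal}: first establish~\eqref{eq:Abarqp-sum} when $\bar{\bA}$ is ultra minor generic (invoking the Gröbner-theoretic~\Cref{thm:Abarqp-gb}), then descend to arbitrary arrangements with pairwise distinct centers using~\Cref{thm:orbits}(3).

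For the ultra minor generic case, I would start from~\Cref{thm:Abarqp-gb}: specializing $G_{\Aqp^{m,1}}$ to such $\bar{\bA}$ yields a Gröbner basis, hence a generating set, for $\Abarqp^{m,1}$. By~\Cref{def:Aqp-GB}, $G_{\Aqp^{m,1}}$ consists of the $\kfour$-focals together with the elements of $G_{\minorideal}$ that are not bumped focals; by property \textbf{P3} in the proof of~\Cref{prop:MAqp-GB}, these include the $3m$ cubic $2\times 2$ minors of the matrices $(A_i q \mid p_i)$. Hence the specialized Gröbner basis generates
\[
M^{m,1}_{\bar{\bA},\qq,\pp} + \langle \text{specialized } \kfour\text{-focals} \rangle.
\]
Since ultra minor generic implies minor generic, \Cref{thm:AST-UGB} identifies the second summand with $\Abarp^{m,1}$, and~\Cref{cor:2-3-focal} further replaces the $4$-focals by $\kthree$-focals. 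This settles both the generator claim and~\eqref{eq:Abarqp-sum} in the ultra minor generic case.

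For the reduction, let $\bar{\bA}$ have only pairwise distinct centers and, using~\Cref{thm:orbits}(3), pick $(H,(H_1,\ldots,H_m)) \in \PGL_4 \times (\PGL_3)^m$ so that $\bar{\bA}'_i := H_i \bar{A}_i H^{-1}$ is ultra minor generic. The map $\psi(\bar q, \bar \pp) := (H\bar q,\, (H_1 \bar p_1,\ldots,H_m \bar p_m))$ restricts to a bijection $\Gamma_{\bar{\bA},\qq,\pp}^{m,1} \to \Gamma_{\bar{\bA}',\qq,\pp}^{m,1}$, since $\bar A_i \bar q \sim \bar p_i$ iff $\bar A'_i (H\bar q) \sim H_i \bar p_i$. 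Its pullback $\psi^*$ is a $\ZZ^{m+1}$-graded ring automorphism of $\CC[\qq,\pp]$ carrying $\iideal{\bar{\bA}',\qq,\pp}^{m,1}$ onto $\Abarqp^{m,1}$. Its restriction to $\CC[\pp]$ is precisely the $(\PGL_3)^m$-change of variables considered in the proof of~\Cref{cor:2-3-focal}, so it carries $\iideal{\bar{\bA}',\pp}^{m,1}$ onto $\Abarp^{m,1}$. Finally, $\psi^*$ sends each $2\times 2$ minor of $(\bar A'_i \qq \mid \pp_i)$ to a $2\times 2$ minor of $H_i (\bar A_i \qq \mid \pp_i)$; since $H_i$ is invertible, a direct multilinear expansion shows these generate the same ideal as the $2\times 2$ minors of $(\bar A_i \qq \mid \pp_i)$. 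Applying $\psi^*$ to the ultra minor generic identity $\iideal{\bar{\bA}', \qq, \pp}^{m,1} = M^{m,1}_{\bar{\bA}', \qq, \pp} + \iideal{\bar{\bA}', \pp}^{m,1}$ then yields~\eqref{eq:Abarqp-sum}.

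The main subtlety I anticipate is verifying that the $(\PGL_3)^m$-action on $\pp$ sends the minor ideal $M^{m,1}_{\bar{\bA}',\qq,\pp}$ exactly onto $M^{m,1}_{\bar{\bA},\qq,\pp}$, rather than merely a radical or saturation of it. This is the sole step not formal; everything else is a direct application of~\Cref{thm:Abarqp-gb},~\Cref{thm:AST-UGB}, and~\Cref{cor:2-3-focal}.
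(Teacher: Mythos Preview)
Your proposal is correct and follows essentially the same two-step approach as the paper's proof: first establish~\eqref{eq:Abarqp-sum} in the ultra minor generic case via~\Cref{thm:Abarqp-gb}, then descend to arbitrary arrangements with pairwise distinct centers using~\Cref{thm:orbits}(3) and checking that both summands $M^{m,1}_{\bar{\bA},\qq,\pp}$ and $\Abarp^{m,1}$ are preserved under the induced coordinate change $L_h$ on $\CC[\qq,\pp]$. The paper confirms the minor-ideal invariance by exactly the Cauchy-Binet argument you outline (applied to $H_i\left(\BM \bar{A}_i q & p_i \EM\right)$), so your anticipated ``main subtlety'' is resolved precisely as you suggest, with equality of ideals and no need for radicals or saturations.
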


\begin{proof}
A group element $h = (H, H_1 , \ldots , H_m) \in \PGL_4 \times (\PGL_3)^m$ induces a linear automorphism of the $\ZZ^{m+1}$-graded ring $\CC [\qq, \pp]:$
\begin{align*}
L_h : \CC [\qq, \pp] \to \CC [\qq, \pp]\\
q \mapsto H q, \quad
p_i \mapsto H_i p_i.
\end{align*}
We have already shown in~\Cref{thm:Abarqp-gb} that~\eqref{eq:Abarqp-sum} holds when $\bar{\bA}$ is 
ultra minor generic.
To extend this to $\bar{\bA}$ with pairwise distinct centers, we apply~\Cref{thm:orbits} which reduces the proof to checking that the ideals $M^{m,1}_{\bar{\bA}, \qq, \pp}$ and $\Abarp^{m,1}$ are preserved under coordinate change, ie.
\begin{equation*}
M^{m,1}_{\bar{\bA}, \qq, \pp} = L_h \left(M_{h\bar{\bA}, \qq, \pp}^{m,1}\right),\quad
\Abarp^{m,1} = L_h \left( \iideal{h \bar{\bA}, \pp}^{m,1}\right).
\end{equation*}
The second equality appeared already in the proof of~\Cref{cor:2-3-focal}.
The first may be proven along similar lines: we calculate
\begin{align*}
\left(\BM \bar{A}_i q & p_i \EM\right) &= \left(\BM H_i^{-1} (H_i A_i H^{-1}) H q & H_i^{-1} (H_i p_i)  \EM\right)\\
&= H_i^{-1} \cdot \left(\BM (h \bar{\bA})_i \cdot L_h (q) & L_h (p_i)  
 \EM\right).
\end{align*}
A Cauchy-Binet argument analogous to that given in~\Cref{sec:proofs appendix} shows that every $2\times 2$ minor of the matrix on the left-hand side is a linear combination of $2\times 2$ minors of the matrix on the right.
This gives the inclusion $\Abarp^{m,1} \subset L_h \left( \iideal{h \bar{\bA}, \pp}^{m,1} \right)$.
The reverse inclusion is similar.
\end{proof}

In closing, we note that the ideals appearing in this section and the last provide another route to understanding the ideals obtained by eliminating $\qq$ from them.
For instance, we could use the results of these sections to give a different proof of~\Cref{cor:2-3-focal}, or the ideal-theoretic part of~\Cref{thm:Ap-GB}.

\section{$\Aqp^{m,n}, \Ap^{m,n}, \Abarqp^{m,n}, \Abarp^{m,n}$: The square for $n\ge 1$}
\label{sec:n-points}

So far, we have characterized the ideals in~\Cref{fig:ideals} for the case of $n=1$ world point and $m$ cameras. 
In this section, we establish results for these ideals when $n\ge1.$

For each $k=1, \ldots , n,$ denote as follows the elimination ideals 
involving the $k$th world point:
\begin{equation}\label{eq:restricted-ideals}
\begin{split}
\Aqp[k]^{m,1} &= \Aqp^{m,n} \cap \CC [\bA, \qq_k, \pp_{1 k} \ldots , \pp_{m k}],\\
\Ap[k]^{m,1} &=  \Ap^{m,n} \cap \CC [\bA, \pp_{1 k} \ldots , \pp_{m k}],\\
\Abarqp[k]^{m,1} &= \Abarqp^{m,n} \cap \CC [\qq_k, \pp_{1 k} \ldots , \pp_{m k}],\\
\Abarp[k]^{m,1} &= \Abarp^{m,n} \cap \CC [\pp_{1 k} \ldots , \pp_{m k}].
\end{split}
\end{equation}
One naturally hopes that summing the appropriate extensions of the ideals in~\eqref{eq:restricted-ideals}, over $k=1, \ldots , n$, would give us the ideals of~\Cref{fig:ideals} for arbitrary $n$. If this were true, our results about explicit generators for these ideals in the $n=1$ case would generalize effortlessly.
For the triangulation ideals $\Abarqp^{m,n}$ and $\Abarp^{m,n}$, this ideal scenario turns out to be true. 
The essential observation is that $\GammaAbarqp^{m,n}$ and $\GammaAbarp^{m,n}$ are both direct products of the restricted varieties:
\begin{align*}
\GammaAbarqp^{m,n} &= \V (\Abarqp[1]^{m,1} ) \times \cdots \times \V (\Abarqp[n]^{m,1} ),\\
\GammaAbarp^{m,n} &= \V (\Abarp[1]^{m,1} ) \times \cdots \times \V (\Abarp[n]^{m,1} ).
\end{align*}

Let $\CC [\qq , \pp ] \, \Abarqp[k]^{m,1}$ be the extension of  
$\Abarqp[k]^{m,1}$ in the ring $\CC[\qq,\pp]$, i.e., the ideal generated by the 
generators of $\Abarqp[k]^{m,1}$ in $\CC[\qq,\pp]$. 
The first result of this section determines explicit generators for the triangulation ideals $\Abarqp^{m,n}$ and $\Abarp^{m,n}$ whenever $\bar{\bA}$ has pairwise distinct centers.

\begin{theorem}[Triangulation ideals when $n\ge 1$]\label{thm:sum-ideal}
For $\bar{\bA}$ with pairwise distinct centers and any $n\ge 1$, the triangulation ideals 
are obtained by summing the extensions of individual ideals in the following sense:
\begin{equation}
\label{eq:sum-Abarqp}
\Abarqp^{m,n} = \displaystyle\sum_{k=1}^n   \CC [\qq , \pp ] \, \Abarqp[k]^{m,1} ,
\end{equation}
\begin{equation}
\label{eq:sum-Abarp}
\Abarp^{m,n} = \displaystyle\sum_{k=1}^n \CC [\pp ]\, \Abarp[k]^{m,1} .
\end{equation}
\end{theorem}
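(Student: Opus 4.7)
The plan is to establish both identities simultaneously by realizing each triangulation variety as a direct product of $n$ copies of the corresponding $n=1$ triangulation variety, and then invoking the standard fact that the vanishing ideal of a product of reduced closed subvarieties in a multiprojective space is the sum of the extensions of the individual vanishing ideals.

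First I would show the product decomposition
\[
\GammaAbarqp^{m,n} = \V(\Abarqp[1]^{m,1}) \times \cdots \times \V(\Abarqp[n]^{m,1}),
\]
and analogously for $\GammaAbarp^{m,n}$. Since $\bar{\bA}$ is fixed, each defining condition $\bar{A}_i \bar{q}_j \sim \bar{p}_{ij}$ involves only the data attached to the $j$th world point, so the set of concrete tuples $(\bar{\qq}, \bar{\pp})$ satisfying all such conditions literally factors as a Cartesian product over $j = 1, \ldots, n$. Because Zariski closure commutes with finite Cartesian products of subsets of a product of varieties (the product of closures is closed and minimal among closed supersets of the product), the displayed equality follows, and each factor is identified with $\V(\Abarqp[k]^{m,1})$ via the definition of the elimination ideal together with the observation that the $n=1$ triangulation variety is the same for every $k$ after relabeling variables, because $\bar{\bA}$ does not depend on $k$.

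Next I would invoke the general algebraic statement that for reduced closed subvarieties $X_k \subset \PP^3 \times (\PP^2)^m$ with multigraded vanishing ideals $J_k \subset \CC[\qq_k, \pp_{1k}, \ldots, \pp_{mk}]$,
\[
\ideal\big(X_1 \times \cdots \times X_n\big) = \sum_{k=1}^n \CC[\qq, \pp]\, J_k.
\]
The inclusion ``$\supseteq$'' is immediate. For ``$\subseteq$'', observe that the quotient of $\CC[\qq, \pp]$ by the right-hand side is isomorphic to the tensor product $\bigotimes_{k} \CC[\qq_k, \pp_{1k}, \ldots, \pp_{mk}]/J_k$, each factor of which is a reduced $\CC$-algebra; since $\CC$ is algebraically closed, this tensor product is reduced, so the right-hand side is radical. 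A radical ideal coincides with the vanishing ideal of its variety, which by the previous step is $X_1 \times \cdots \times X_n$. Applying this fact to the two product decompositions produces \eqref{eq:sum-Abarqp} and \eqref{eq:sum-Abarp} in tandem.

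The main technical concern is the closure bookkeeping in the first step: one must verify that the slice of $\GammaAqp^{m,n}$ by $\bA = \bar{\bA}$ — whose original definition passes through a Zariski closure in the full ambient space — really coincides with the closure of the concrete set of tuples satisfying $\bar{A}_i \bar{q}_j \sim \bar{p}_{ij}$. The pairwise-distinct-centers hypothesis is the standing genericity assumption under which the $n=1$ triangulation ideals are cleanly described via \Cref{thm:Abarqp-ideal}; it guarantees well-behaved representatives for each summand $\Abarqp[k]^{m,1}$, but is not essential to the product argument itself. The reducedness of the tensor product over $\CC$ is routine, depending only on $\CC$ being algebraically closed.
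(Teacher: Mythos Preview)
Your proposal is correct and follows essentially the same approach as the paper: recognize the triangulation varieties as direct products of their $n=1$ factors, then invoke the fact that the vanishing ideal of a product of closed subvarieties is the sum of the extended vanishing ideals of the factors. The paper's proof is terser---it simply cites the $n=1$ results (\Cref{cor:2-3-focal} and \Cref{thm:Abarqp-ideal}) together with the product--ideal fact---whereas you spell out the tensor-product reducedness argument and flag the closure bookkeeping, but the skeleton is identical.
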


Under appropriate genericity assumptions on $\bar{\bA}$, the Gr\"{o}bner bases studied in~\Cref{sec:Abarp} and~\Cref{sec:Abarqp} may be combined to give Gr\"{o}bner bases for any $n \ge 1$.
A precise statement is the following result.

\begin{theorem}[Gr\"obner bases for triangulation ideals when $n\ge 1$]\label{thm:sum-ideal-GB}
Suppose $G_1, \ldots , G_n$ are Gr\"{o}bner bases for the restricted ideal $\Abarqp[k]^{m,1}$ (or $\Abarp[k]^{m,1}$) for $k=1,\ldots , n$, 
with respect to the term orders $<_1, \ldots , <_n$ allowed in  \Cref{thm:Abarqp-gb} (or \Cref{thm:AST-UGB}). 
If $<$ is any term order on $\CC [\qq, \pp]$ (or $\CC [\pp]$) refining $<_1, \ldots , <_n,$ then $G = G_1\cup \cdots \cup G_n$ is a Gr\"{o}bner basis with respect to $<$ for the triangulation ideal $\Abarqp^{m,n}$ (or $\Abarp^{m,n}$).
\end{theorem}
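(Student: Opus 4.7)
The plan is to apply Buchberger's S-pair criterion, leveraging the structural fact that the restricted ideals $\Abarqp[k]^{m,1}$ (respectively $\Abarp[k]^{m,1}$) live in polynomial subrings on pairwise disjoint sets of variables as $k$ varies over $[n]$: namely $\CC[\qq_k, \pp_{1k}, \ldots, \pp_{mk}]$ (respectively $\CC[\pp_{1k}, \ldots, \pp_{mk}]$).

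First, I would observe that $G = G_1 \cup \cdots \cup G_n$ generates the ideal $\Abarqp^{m,n}$ (respectively $\Abarp^{m,n}$) in the full polynomial ring. This is immediate from \Cref{thm:sum-ideal}, which expresses the triangulation ideal as a sum of the extensions of the restricted ideals, and the hypothesis that each $G_k$ generates $\Abarqp[k]^{m,1}$. It then remains to verify the Gr\"obner basis property with respect to $<$.

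Next I would verify Buchberger's S-pair criterion by splitting into two cases. If $f, g \in G_k$ for the same $k$, then because $G_k$ is a Gr\"obner basis for $\Abarqp[k]^{m,1}$ with respect to $<_k$ and $<$ restricts to $<_k$ on the block of variables indexed by $k$, the leading monomials satisfy $\mathrm{in}_<(f) = \mathrm{in}_{<_k}(f)$ and $\mathrm{in}_<(g) = \mathrm{in}_{<_k}(g)$. Hence $S(f,g)$ reduces to zero modulo $G_k$ with respect to $<_k$, and this reduction remains valid under $<$ and uses only elements of $G_k \subseteq G$. If $f \in G_k$ and $g \in G_{k'}$ with $k \neq k'$, then $\mathrm{in}_<(f)$ and $\mathrm{in}_<(g)$ involve pairwise disjoint variable sets, hence are coprime as monomials. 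Buchberger's product criterion then guarantees that $S(f,g)$ automatically reduces to zero, so no explicit check is required.

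The step I expect to cause the most friction, if any, is simply pinning down the correct interpretation of ``refines'' so that the $G_k$-reductions transfer verbatim. But since each $G_k$ sits inside its own disjoint variable block, $<$ and $<_k$ agree on every monomial appearing in any element of $G_k$ or any intermediate polynomial produced during the reduction of an S-pair of two elements of $G_k$, so this transfer is automatic. Both the triangulation-with-structure case $\Abarqp^{m,n}$ and the multiview case $\Abarp^{m,n}$ are handled uniformly by this argument, the only difference being the variable set on which each $G_k$ is supported.
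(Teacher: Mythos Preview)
Your proposal is correct and follows essentially the same approach as the paper's proof: both invoke \Cref{thm:sum-ideal} for the generation claim and then verify Buchberger's S-pair criterion by splitting into the two cases (same $G_k$, where reduction transfers, and different $G_k$, where the product criterion applies because the variable supports are disjoint). Your additional remarks about the refinement $<$ agreeing with $<_k$ on the relevant variable block make explicit what the paper leaves implicit.
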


In particular, if $\bar{\bA}$ is a minor generic arrangement, then the set of specialized $\kfour$-focals form a universal Gr\"{o}bner basis for $\Abarp^{m,n}.$
Likewise, if $\bar{\bA}$ is ultra minor generic, then the union of the Gr\"{o}bner bases $G_{\Abarqp[k]^{m,1}}$ described in~\Cref{thm:Abarqp-gb} is a Gr\"{o}bner basis for $\Abarqp^{m,n}$ with respect to a suitable refinement of the GRevLex or Lex orders used 
in \Cref{thm:Abarqp-gb}.

In the case of the two bundle adjustment ideals in \Cref{fig:ideals} we can write down an 
ideal-theoretic result using our results for $n=1$. These ideals may be obtained by summing the corresponding extended ideals for each individual world point, and then saturating by a polynomial defining an appropriate locus of non-generic cameras.
\begin{theorem}\label{thm:saturation}
For any $n\ge 1$ we have
\begin{equation}
\label{eq:sum-Aqp}
\Aqp^{m,n} = \left( \displaystyle\sum_{k=1}^n \CC [\bA, \qq, \pp] \, \Aqp[k]^{m,1}\right) : s_{\text{\rm ultra}}^\infty ,
\end{equation}
\begin{equation}
\label{eq:sum-Ap}
\Ap^{m,n} = \left( \displaystyle\sum_{k=1}^n \CC [\bA , \pp ] \, \Ap[k]^{m,1}\right) : s^\infty ,
\end{equation}
where $s$ is the product of all $4\times 4$ minors of $\left( \BM A_1^\top & \cdots & A_m^\top \EM \right),$ and $s_{\text{\rm ultra}}$ is the product of all minors (of any size) of $\left( \BM A_1^\top & \cdots & A_m^\top \EM \right)$.
\end{theorem}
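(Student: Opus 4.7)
The plan is to prove \eqref{eq:sum-Aqp} and \eqref{eq:sum-Ap} in parallel, treating \eqref{eq:sum-Aqp} in detail and noting that the same template, with $s$, \Cref{thm:Ap-GB}, and \Cref{thm:AST-UGB} replacing $s_{\text{ultra}}$, \Cref{thm:Aqp-master-n1}, and \Cref{thm:Abarqp-gb}, handles \eqref{eq:sum-Ap}. Write $J$ for the right-hand sum in \eqref{eq:sum-Aqp}. I would first observe that each restricted ideal $\Aqp[k]^{m,1}$ is indeed (up to relabeling of $\qq, \pp$) the $n=1$ ideal $\Aqp^{m,1}$ studied in \Cref{sec:Aqp}, since the coordinate projection $\GammaAqp^{m,n} \to \GammaAqp[k]^{m,1}$ is surjective. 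The inclusion $J : s_{\text{ultra}}^\infty \subseteq \Aqp^{m,n}$ now follows from three observations: the contraction relation gives $J \subseteq \Aqp^{m,n}$; the variety $\GammaAqp^{m,n}$ is irreducible, being the Zariski closure of the image of a rational map from the irreducible space $(\PP^{11})^m \times (\PP^3)^n$, so $\Aqp^{m,n}$ is prime; and $s_{\text{ultra}}$ is nonzero at any point arising from an ultra minor generic camera arrangement, so $s_{\text{ultra}} \notin \Aqp^{m,n}$ and consequently $\Aqp^{m,n} : s_{\text{ultra}}^\infty = \Aqp^{m,n}$.

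For the reverse containment $\Aqp^{m,n} \subseteq J : s_{\text{ultra}}^\infty$, I would pass to the localized ring $R = \CC[\bA][s_{\text{ultra}}^{-1}]$ (this is precisely the non-field integral domain flagged in~\Cref{sec:notation-background}) and aim to show the equality of extended ideals $J \cdot R[\qq, \pp] = \Aqp^{m,n} \cdot R[\qq, \pp]$. Contracting back to $\CC[\bA, \qq, \pp]$ then yields the desired inclusion, since the saturation $J : s_{\text{ultra}}^\infty$ is exactly the contraction of $J \cdot R[\qq, \pp]$ to the original ring. To establish the extended equality I would construct a Gr\"obner basis of $J \cdot R[\qq, \pp]$ as a union $G = \bigcup_{k=1}^n G_{\Aqp[k]^{m,1}}$, using the Gr\"obner bases furnished by \Cref{thm:Aqp-master-n1}. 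I would use a product order in which $\bA$ is cheapest and each block $(q_k, p_{1k}, \ldots, p_{mk})$ is ordered by one of the term orders in \Cref{prop:MAqp-GB}. Over $R$, Property \textbf{P7} of \Cref{sec:Aqp} guarantees that the leading coefficients of elements of $G$ are units, and hence the leading monomials of elements of $G_{\Aqp[k]^{m,1}}$ and $G_{\Aqp[\ell]^{m,1}}$ for $k \ne \ell$ lie in disjoint sets of $(\qq, \pp)$ variables. S-pairs within a single block reduce to zero by \Cref{thm:Aqp-master-n1} applied over $R$, and S-pairs across blocks reduce to zero by the standard coprime-leading-monomial criterion, exactly as in the proof of \Cref{thm:sum-ideal-GB}.

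The main obstacle will be upgrading the statement ``$G$ is a Gr\"obner basis for $J \cdot R[\qq, \pp]$'' to the ideal-theoretic equality $J \cdot R[\qq, \pp] = \Aqp^{m,n} \cdot R[\qq, \pp]$. I would discharge this by verifying the three conditions of the recognition criterion \Cref{prop:prop-recognition} for $J \cdot R[\qq, \pp]$, viewed as an ideal in the $R$-algebra $R[\qq, \pp]$. Set-theoretically, specializing $\bA$ to any ultra minor generic $\bar{\bA}$ in $J \cdot R[\qq, \pp]$ gives the ideal $\sum_k \CC[\qq, \pp] \Abarqp[k]^{m,1}$, which equals $\Abarqp^{m,n}$ by \Cref{thm:sum-ideal}, and whose vanishing locus is the corresponding fiber of $\GammaAqp^{m,n}$; allowing $\bar{\bA}$ to range over $\{ s_{\text{ultra}} \ne 0 \}$ gives the required set-theoretic equality. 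Radicality of $J \cdot R[\qq, \pp]$ follows from \Cref{prop:radical} applied over the integral domain $R$, since the leading monomials in $G$ remain squarefree after localization. Finally, saturation with respect to the irrelevant ideals follows by iteratively applying \Cref{cor:Aqp-sat} within each block and invoking the decoupled-variable structure across blocks, mirroring the closing calculation in the proof of \Cref{thm:Aqp-master-n1}. Together these conditions force $J \cdot R[\qq, \pp] = \Aqp^{m,n} \cdot R[\qq, \pp]$, completing the proof of \eqref{eq:sum-Aqp}. The parallel argument for \eqref{eq:sum-Ap} uses that the leading coefficients of the $\kfour$-focals are $4 \times 4$ minors of the stacked camera matrix, which is why saturation by the weaker polynomial $s$ suffices in that case.
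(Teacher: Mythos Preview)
Your approach is essentially the paper's: localize $\CC[\bA]$ at the appropriate minor product, assemble the Gr\"obner basis over the localization as a disjoint-support union of the $n=1$ bases (Property \textbf{P7} making the leading coefficients units), handle cross-block S-pairs by the coprime-leading-term criterion exactly as in \Cref{thm:sum-ideal-GB}, and finish via the recognition criterion; the paper merely swaps which of the two equalities is spelled out in detail. The one point of care is that the paper applies \Cref{prop:prop-recognition} to the \emph{contracted} ideal $J:s_{\text{ultra}}^\infty$ in $\CC[\bA,\qq,\pp]$ rather than to $J\cdot R[\qq,\pp]$---the criterion as stated lives over $\CC$, not over the non-field $R$---but your three checks (fiberwise cutting-out, radicality via \Cref{prop:radical} over the domain $R$, and block-wise saturation) transfer directly to the contracted ideal, so this is only a matter of phrasing.
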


The following is an immediate corollary of \Cref{thm:saturation}.
\begin{corollary} \label{cor:saturation}
Set-theoretically,
\begin{align}
\label{eq:gammas-complement}
\GammaAqp^{m,n} = & \cl \left( \V \left( \displaystyle\sum_{k=1}^n \CC[\bA,\qq,\pp] \Aqp[k]^{m,1}\right) \setminus \V (s_{\text{ultra}}) \right), \textup{ and }\\
\GammaAp^{m,n} = &\cl \left(\V \left( \displaystyle\sum_{k=1}^n \CC[\bA,\pp]\Ap[k]^{m,1}\right) \setminus \V (s) \right).
\end{align}
\end{corollary}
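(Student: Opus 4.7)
The plan is to derive the corollary as a direct translation of the ideal-theoretic identities of \Cref{thm:saturation} into the language of varieties, using the standard saturation--closure correspondence.

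First, I would recall the standard fact that for any homogeneous ideal $I \subset \CC[\mathbf{x}]$ in the multigraded coordinate ring of a product of projective spaces, and any homogeneous polynomial $f$, one has
\[
\V(I : f^{\infty}) \;=\; \cl\bigl(\V(I) \setminus \V(f)\bigr),
\]
where the closure is taken in the Zariski topology. This is proved by the usual Nullstellensatz-style argument: a prime $\frakp$ contains $I : f^{\infty}$ iff it contains $I$ and does not contain $f$, so the minimal primes of $I : f^{\infty}$ are exactly the minimal primes of $I$ that do not contain $f$. Geometrically, these cut out the irreducible components of $\V(I)$ not contained in $\V(f)$, whose union is the desired closure. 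I would state this as a brief lemma or simply invoke it.

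Next, I would apply this correspondence with $f = s_{\text{ultra}}$ and $I = \sum_{k=1}^{n} \CC[\bA,\qq,\pp]\,\Aqp[k]^{m,1}$, respectively with $f = s$ and $I = \sum_{k=1}^{n} \CC[\bA,\pp]\,\Ap[k]^{m,1}$. Taking vanishing loci on both sides of the two equalities in \Cref{thm:saturation} yields
\[
\V(\Aqp^{m,n}) = \cl\Bigl(\V\bigl(\textstyle\sum_{k}\CC[\bA,\qq,\pp]\Aqp[k]^{m,1}\bigr) \setminus \V(s_{\text{ultra}})\Bigr),
\]
and similarly for $\Ap^{m,n}$.

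Finally, to identify the left-hand sides with $\GammaAqp^{m,n}$ and $\GammaAp^{m,n}$, I would invoke the definition of the bundle adjustment ideals as the vanishing ideals of these varieties (\Cref{def:mother-ideal} and the subsequent definitions), together with the fact that $\V(\ideal(X)) = X$ for a closed subvariety $X$ of a product of projective spaces --- this is part of the projective Nullstellensatz. Since both $\GammaAqp^{m,n}$ and $\GammaAp^{m,n}$ are already Zariski closed (the former by definition, the latter by the projective elimination theorem, as noted in the introduction), these identifications are immediate and conclude the proof.

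There is no significant obstacle here: the content of the corollary lies entirely in \Cref{thm:saturation}, and the proof amounts to pushing that result through the two standard dictionary entries above (saturation $\leftrightarrow$ closure of complement, and vanishing ideal $\leftrightarrow$ variety).
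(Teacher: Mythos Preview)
Your proposal is correct and matches the paper's intent: the paper does not give a separate proof but simply states that the corollary is ``an immediate corollary of \Cref{thm:saturation},'' and your argument spells out precisely the standard saturation--closure correspondence and the identification $\V(\ideal(X))=X$ that make it immediate.
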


However, we can also prove a slightly stronger set-theoretic statement.

\begin{theorem}\label{thm:ba-set-theoretic}
Consider the subvariety $C_{\bA, \qq, \pp}^{m,n} \subset \GammaAqp^{m,n}$ consisting of all tuples $(\bar{\bA}, \bar{\qq}, \bar{\pp})$ where $\bar{\bA}$ is such that the kernels of some pair of camera matrices intersect nontrivially.
Let $C_{\bA , \pp}^{m,n}$ denote its projection into $\GammaAp^{m,n}.$ 
Then
\begin{align}
\GammaAqp^{m,n} = & \cl \left(\V \left( \displaystyle\sum_{k=1}^n \CC[\bA,\qq,\pp] \Aqp[k]^{m,1}\right) \setminus C_{\bA , \qq, \pp}^{m,n} \right),\\
\GammaAp^{m,n} = & \cl \left(\V \left( \displaystyle\sum_{k=1}^n \CC[\bA,\pp] \Ap[k]^{m,1}\right) \setminus C_{\bA, \pp}^{m,n} \right).
\end{align}
\end{theorem}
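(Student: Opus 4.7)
The plan is to verify both displayed equalities by proving each containment, focusing the main effort on the $\GammaAqp^{m,n}$ statement; the $\GammaAp^{m,n}$ statement will follow by an analogous argument supplemented by the projection $\pi_{\qq}$. Abbreviate $W := \V\bigl( \sum_{k=1}^n \CC[\bA,\qq,\pp] \, \Aqp[k]^{m,1} \bigr)$ and $C_1 := C^{m,n}_{\bA,\qq,\pp}$. The easy containment $\GammaAqp^{m,n} \subseteq \cl(W \setminus C_1)$ will follow from three observations: $\GammaAqp^{m,n} \subseteq W$ (since each $\Aqp[k]^{m,1}$ is an elimination ideal of $\Aqp^{m,n}$), $\GammaAqp^{m,n}$ is irreducible (as the closure of the graph of a rational map whose domain is an open subset of an irreducible product of projective spaces), and $C_1$ is a proper subvariety (a general camera arrangement has pairwise distinct centers). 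Together these give $\GammaAqp^{m,n} = \cl(\GammaAqp^{m,n} \setminus C_1) \subseteq \cl(W \setminus C_1)$.

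For the reverse containment $\cl(W \setminus C_1) \subseteq \GammaAqp^{m,n}$, closedness of $\GammaAqp^{m,n}$ reduces the problem to proving the pointwise inclusion $W \setminus C_1 \subseteq \GammaAqp^{m,n}$. Given $(\bar{\bA}, \bar{\qq}, \bar{\pp}) \in W \setminus C_1$, \Cref{thm:Aqp-master-n1} implies that each slice $(\bar{\bA}, \bar{q}_k, \bar{p}_{1k}, \ldots, \bar{p}_{mk})$ lies in $\GammaAqp^{m,1}$. The key observation is that the perturbation argument from the proof of \Cref{prop:multiview-focals} can be applied independently to each world point $\bar{q}_k$ while $\bar{\bA}$ is kept fixed: when $\bar{\bA}$ has pairwise distinct centers, at most one camera $\bar{A}_i$ annihilates a given $\bar{q}_k$, and choosing $\bar{q}_k^{**}$ so that $\bar{A}_i \bar{q}_k^{**} \sim \bar{p}_{ik}$ for that $i$ (if any), the sequence $\bar{q}_k^{(\ell)} := \bar{q}_k + (1/\ell) \bar{q}_k^{**}$ together with $\bar{p}_{jk}^{(\ell)} := \bar{A}_j \bar{q}_k^{(\ell)}$ produces points in the graph defining $\GammaAqp^{m,1}$, converging in projective coordinates to the given slice. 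Assembling these slice-wise sequences yields a sequence in $\GammaAqp^{m,n}$ whose limit is $(\bar{\bA}, \bar{\qq}, \bar{\pp})$.

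The remaining scenario---where $\bar{\bA}$ has some pair of cameras with intersecting kernels, so that $(\bar{\bA}, \bar{\qq}, \bar{\pp}) \notin C_1$ forces $(\bar{\bA}, \bar{\qq}, \bar{\pp}) \notin \GammaAqp^{m,n}$ by definition of $C_1$---must also be handled: here one perturbs $\bar{\bA}$ to a nearby arrangement $\bar{\bA}^{(\ell)}$ with pairwise distinct centers and applies the previous argument slice by slice, with the perturbation chosen so that each $\bar{A}_i^{(\ell)} \bar{q}_k^{(\ell)}$ converges projectively to $\bar{p}_{ik}$. This case is the main technical obstacle, since the joint perturbations of $\bar{\bA}$ and of the world points $\bar{q}_k$ must be arranged compatibly across all $n$ slices. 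Finally, the $\GammaAp^{m,n}$ equality is obtained by the same scheme: any point of $\V\bigl(\sum_k \CC[\bA,\pp] \Ap[k]^{m,1}\bigr) \setminus C^{m,n}_{\bA,\pp}$ lifts, via the surjection $\GammaAqp^{m,1} \twoheadrightarrow \GammaAp^{m,1}$ on each slice, to a point of $W \setminus C_1$, which the preceding argument places in $\GammaAqp^{m,n}$; projecting by $\pi_{\qq}$ then yields the required membership in $\GammaAp^{m,n}$.
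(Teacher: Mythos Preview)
Your overall strategy matches the paper's: use irreducibility of $\GammaAqp^{m,n}$ for one containment, and a slice-by-slice perturbation argument in the style of \Cref{prop:multiview-focals} for the pointwise inclusion when $\bar{\bA}$ has pairwise non-intersecting kernels. That part is correct and in fact more explicit than the paper, which simply invokes ``the same arguments used in the proof of \Cref{prop:Ap-set}.''

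The flaw is in your ``remaining scenario.'' You correctly deduce that if $\bar{\bA}$ has some pair of intersecting kernels and $(\bar{\bA},\bar{\qq},\bar{\pp})\notin C^{m,n}_{\bA,\qq,\pp}$, then by the definition of $C^{m,n}_{\bA,\qq,\pp}$ as a subset of $\GammaAqp^{m,n}$ one must have $(\bar{\bA},\bar{\qq},\bar{\pp})\notin\GammaAqp^{m,n}$. But then you propose to perturb $\bar{\bA}$ and $\bar{\qq}$ so as to exhibit this point as a limit of points in the graph---which would prove $(\bar{\bA},\bar{\qq},\bar{\pp})\in\GammaAqp^{m,n}$, contradicting what you just established. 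No ``compatible joint perturbation'' can dissolve this contradiction; if such a point exists, it is a counterexample to the inclusion you are trying to prove, and \Cref{ex:Ap-n-3-m-2} shows that for $\GammaAp^{m,n}$ such points really do exist over the coincident-center locus.

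The paper sidesteps the issue. Writing $D\subset(\PP^{11})^m$ for the locus of arrangements with some pair of intersecting kernels, the perturbation argument actually establishes the equality of \emph{open pieces}
\[
W\cap\pi^{-1}(D^c)\;=\;\GammaAqp^{m,n}\cap\pi^{-1}(D^c)\;=\;\GammaAqp^{m,n}\setminus C^{m,n}_{\bA,\qq,\pp},
\]
and then irreducibility closes up to give $\GammaAqp^{m,n}=\cl\bigl(W\cap\pi^{-1}(D^c)\bigr)$. In effect the argument removes the full preimage $\pi^{-1}(D)$ from $W$, not merely $C^{m,n}_{\bA,\qq,\pp}\subset\GammaAqp^{m,n}$; with that reading your ``remaining scenario'' never arises, and your main-case argument (perturbing the world points independently while holding $\bar{\bA}$ fixed) already completes the proof of both displayed equalities.
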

We note that~\Cref{thm:ba-set-theoretic} automatically implies~\Cref{cor:saturation}, since every (ultra) minor generic camera arrangement has pairwise distinct centers. 
For instance, 
since $C_{\bA , \pp}^{m,n} \subsetneq \V (s)$, we have the containments 
\[
\GammaAp^{m,n} \subset 
\cl \left(\V \left( \displaystyle\sum_{k=1}^n \CC[\bA,\pp] \Ap[k]^{m,1}\right) \setminus \V (s) \right)
\subset 
\cl \left(\V \left( \displaystyle\sum_{k=1}^n \CC[\bA,\pp] \Ap[k]^{m,1}\right) \setminus C_{\bA , \pp}^{m,n}\right),
\]
and~\Cref{thm:ba-set-theoretic} gives equality throughout.
On the other hand, it remains unclear whether or not~\Cref{thm:ba-set-theoretic} can be ``upgraded" to an ideal-theoretic result, since the technique used to prove the ideal-theoretic~\Cref{thm:saturation} requires working in rings where the various minors have inverses.

To summarize our results for $n\ge 1$ world point, the situation for the triangulation ideals $\Abarp^{m,n}$ and $\Abarqp^{m,n}$ is as nice as possible.
For the bundle adjustment ideals $\Ap^{m,n}$ and $\Aqp^{m,n},$~\Cref{thm:saturation} provides a characterization up to saturation by certain loci of non-generic cameras.
However, we know little about explicit generators of the bundle adjustment ideals when $n>1.$
Below, we investigate the smallest interesting cases, $\Ap^{2,2}$ and $\Ap^{2,3},$ highlighting differences from the results for $n=1$ established in~\Cref{sec:Ap}.  

\begin{example}
\label{ex:Ap-n-2-m-2}
Consider the case of $(m,n) = (2,2).$ 
The ideal $\CC[\bA, \pp] \Ap[1]^{2,1} + \CC[\bA,\pp] \Ap[2]^{2,1}$ is minimally generated by the two $2$ focals of degree $6$ and is already saturated with respect to $\langle s \rangle$. 
Therefore,   $\Ap^{2,2} = \CC[\bA,\pp] \Ap[1]^{2,1} + \CC[\bA,\pp] \Ap[2]^{2,1}$ 
as stated in \Cref{thm:saturation}. 
The equality can also be verified computationally, by checking that the right-hand side is a prime ideal of Krull dimension $34 = 11 m + 3n + m + mn$, where the right-hand side is the dimension of the affine cone $\widehat{\GammaAp^{m,n}}.$ 
The situation with Gr\"obner bases is more complicated. Using \texttt{Macaulay2}, we may verify that $\Ap^{2,2}$ has a reduced Gr\"{o}bner basis consisting of the $2$-focals together with four additional elements of degrees $8$ through $11$ for a Lex order with $\bA> \pp.$ This is not the union of any Lex Gr\"obner bases of the individual ideals, which would consist of just the two $2$-focals. 
\end{example}

\Cref{ex:Ap-n-2-m-2} shows that~\Cref{thm:Ap-GB} does not generalize for $n>1.$
Based on this example, it is tempting to conjecture that the $k$-focals might still {\em generate} $\Ap^{m,n}$ for all $m$ and $n.$
However, such a conjecture is typically false for $n>1.$
In fact, for $m=2$ cameras and $n=3$ world points, the next example shows that \emph{the $k$-focals do not even cut out $\GammaAp^{2,3}$ set-theoretically!}

\begin{example}\label{ex:Ap-n-3-m-2}
Consider, in the case of $(m,n) = (2,3),$ the ideal $\ideal$ generated by the three $2$-focals associated to a partially-symbolic arrangement $\mathbf{B} = \left(\left( \BM I & 0 \EM \right), B_2\right)$ in the ring $\CC [\mathbf{B}_2, \pp_{1 1}, \ldots , \pp_{2 3} ]$.
The ideal $\ideal$ is radical, with the prime decomposition
\[
\ideal = \ideal_{\mathbf{B}, \pp }^{2,3} \cap \langle B_2 [:, 4]\rangle .
\]
Thus, the vanishing ideal $\ideal_{\mathbf{B}, \pp }^{2,3}$ may be obtained as an ideal quotient of the focal ideal:
\[
\ideal_{\mathbf{B}, \pp }^{2,3} = \ideal : \langle B_2 [:, 4]\rangle .
\]
The prime component $\langle B_2 [:, 4] \rangle $ may be understood as defining the locus of camera pairs of the form $\mathbf{B}$, with coincident centers.
The component $\ideal_{\mathbf{B}, \pp }^{2,3} $ is generated by the $2$-focals and an additional polynomial $f(\mathbf{B}_2 , \pp )$ of degree $9$ given by
\begin{equation}\label{eq:notinAp}
f(\mathbf{B}_2 , \pp ) = \det \left(\begin{array}{c|c|c}\hspace{-.5em}
p_{2 1} \times \left( B_2 [:, 1\!:\!3] p_{1 1} \right) &  
p_{2 2} \times \left( B_2 [:, 1:3] p_{1 2} \right) &  
p_{2 3} \times \left( B_2 [:, 1:3] p_{1 3} \right) 
\end{array}\hspace{-.5em}\right),
\end{equation}
where $\times $ denotes the usual cross product.
This polynomial $f$ \emph{does not} lie in $\ideal$.
However, if we specialize to $B_2 = \bar{B}_2$ generic, then the specialization of $f$ is in the specialized focal ideal associated to $(B_1, \bar{B_2})$ by~\Cref{thm:sum-ideal}.

In the above, fixing the first camera allows us to decompose $\ideal $ more easily.
However, we note this example also allows us to prove the strict containment
\begin{equation}\label{eq:strict-2-3}
\Ap^{2,3} \supsetneq \Ap[1]^{2,1} + \Ap[2]^{2,1} + \Ap[3]^{2,1}
\end{equation}
when both cameras $A_1$ and $A_2$ are indeterminate.
To see this, define the following $4\times 4$ symbolic matrix in block form:
\[
H(A_1) = \left( \BM 
\adj A_1 [:, 1:3] & -\adj A_1 [:, 1:3] \, A_1 [:, 4]\\
0 & \det A_1 [:, 1:3]
\EM \right).
\]
This is an explicit formula for a world coordinate change $H\in \PGL_4 $ fixing the first camera to be a multiple of $\left( \BM I & 0 \EM \right)$; if we write
\[
B_1 = A_1 H(A_1) \sim  \left( \BM I & 0 \EM \right),
\quad 
B_2 = A_2 H(A_1),
\]
then $g (\bA, \pp) := f(B_2 , \pp)$ lies in the left, but not the right, ideal in~\eqref{eq:strict-2-3}.
\end{example}

In the vision literature, the polynomial $f$ in~\eqref{eq:notinAp} defines a scalar triple product constraint that was considered in several previous works, eg.~\cite{DBLP:conf/eccv/KneipSP12,ALST17}.
We emphasize that, after \emph{specializing} $\bA$ to an arrangement $\bar{\bA}$ with pairwise distinct centers, the specialized polynomial $g(\bar{\bA}, \pp)$ will lie in the triangulation ideal $\Abarp^{2,3}$ (see eg.~\cite[Appendix 2]{ALST17}.)
This \emph{does not} imply the corresponding statement for the bundle adjustment ideal $\Ap^{2,3}$.
Thus, we observe a similar phenomenon as in~\Cref{ex:Ap-4}, in which the $4$-focals were required to cut out $\GammaAp^{4,1}$; equations that are necessary to define a bundle adjustment ideal may generically specialize to redundant constraints in the associated triangulation ideal.

Returning to the triangulation ideals $\Abarp^{m,n}$ and $\Abarqp^{m,n}$, we now prove~\Cref{thm:sum-ideal} and~\Cref{thm:sum-ideal-GB}, making crucial use of the results for $n=1$ world point given in~\Cref{sec:Abarp} and~\Cref{sec:Abarqp}.

\begin{proof}[Proof of \Cref{thm:sum-ideal} and \Cref{thm:sum-ideal-GB}]
Equations~\eqref{eq:sum-Abarqp} and~\eqref{eq:sum-Abarp} follow by applying~\Cref{cor:2-3-focal} and~\Cref{thm:Abarqp-ideal}, respectively, and the fact that the vanishing ideal of the direct product of closed subvarieties in projective space is the sum of their individual vanishing ideals.
This proves~\Cref{thm:sum-ideal}.

For~\Cref{thm:sum-ideal-GB}, we may show that $G$ is a Gr\"{o}bner basis using Buchberger's S-pair criterion.
Two $g, g' \in G$ are either in the same $G_i$ or they have disjoint variable support.
In the first case, $S(g, g') \to_{G_i} 0$ implies $S(g, g') \to_{G} 0$; in the latter case, the lead terms are relatively prime, so Buchberger's first criterion~\cite[Proposition 4, pp.~106]{CLO15} implies $S(g, g') \to_G 0.$
The explicit descriptions of $G$ for suitably generic $\bar{\bA}$ follow from~\Cref{thm:AST-UGB} and ~\Cref{thm:Abarqp-gb}, respectively.
\end{proof}

Now it only remains to prove our results about the varieties $\GammaAqp^{m,n}, \GammaAp^{m,n}$ and their vanishing ideals $\Aqp^{m,n}$ and $\Ap^{m,n}.$
As in the case of the triangulation ideals treated in the previous proof, the $n=1$ results of~\Cref{sec:Ap} and~\Cref{sec:Aqp} are used in an essential way.
As a warm-up to the ideal-theoretic~\Cref{thm:saturation}, we first prove the simpler, more geometric result of~\Cref{thm:ba-set-theoretic}.

\begin{proof}[Proof of Theorem~\ref{thm:ba-set-theoretic}]
We prove the equality
\begin{equation}\label{eq:75-1}
\GammaAp^{m,n} = \cl \left( \V \left( \displaystyle\sum_{k=1}^n \CC[\bA,\pp] \Ap[k]^{m,1}\right) \setminus C_{\bA , \pp}^{m,n} \right).
\end{equation}
The proof of the second equality in the theorem statement is identical.
First, we observe that 
\begin{equation}\label{eq:75-2}
\GammaAp^{m,n} = \cl \left( \GammaAp^{m,n} \setminus C_{\bA , \pp}^{m,n} \right).
\end{equation}
This follows since $\GammaAp^{m,n}$ is an irreducible variety, and $\GammaAp^{m,n}\cap C_{\bA, \pp}^{m,n}$ is a proper subvariety of $\GammaAp^{m,n}.$
The equality
\begin{equation}\label{eq:75-3}
\cl \left( \V \left( \displaystyle\sum_{k=1}^n \CC[\bA,\pp] \Ap[k]^{m,1}\right) \setminus C_{\bA , \pp}^{m,n} \right)  =
\cl \left( \GammaAp^{m,n} \setminus C_{\bA ,  \pp}^{m,n} \right),
\end{equation}
follows by the same arguments used in the proof of~\Cref{prop:Ap-set}.
Combining~\eqref{eq:75-2} and~\eqref{eq:75-3} gives~\eqref{eq:75-1}.
\end{proof}

Finally, we prove the ideal-theoretic~\Cref{thm:saturation}.
In the proof below, we argue the result for $\Ap^{m,n}$ in~\eqref{eq:sum-Ap}.
A nearly identical argument, relying on Property \textbf{P7} of the Gr\"{o}bner basis $G_{\Aqp^{m,1}}$
in~\Cref{sec:Aqp}, allows us to show~\eqref{eq:sum-Aqp}.
The general idea is to mimic the proof of~\Cref{thm:sum-ideal-GB} by extending scalars $\CC[\bA] \hookrightarrow \CC[\bA]_s$ so that $G$ is a Gr\"{o}bner basis for the ideal it generates in the polynomial ring $\CC[\bA]_s [\pp]$.
We emphasize that $G$ need not be a Gr\"{o}bner basis in the polynomial ring $\CC[\bA, \pp],$ due to~\Cref{ex:Ap-n-2-m-2}.

\begin{proof}[Proof of Theorem~\ref{thm:saturation}]
Let $G_1, \ldots , G_n$ be the sets of $\kfour$-focals which respectively generate the ideals $\Ap[1]^{m,1}, \ldots , \Ap[n]^{m,1}$.
Set $G= G_1\cup \cdots G_n$.
We extend scalars by localizing $\CC[\bA]$ at the powers of $s$:
\[
\CC[\bA]_s = \{ f / s^k \suchthat f \in \CC [\bA], \, k \ge 0 \}.
\]
Let $\iota$ be the natural inclusion
\begin{align*}
\iota: \CC[\bA] [\pp] &\hookrightarrow \CC[\bA]_s [\pp]
\end{align*}
Note that saturating by $\langle s \rangle$ is the same as extending and contracting though the map $\iota$,
\[
\langle G \rangle : s^{\infty } =
\iota^{-1} \left( \CC[\bA]_s \, \langle G \rangle \right).
\]
We claim that
\begin{equation}\label{eq:sat-Ap}
\begin{split}
\Ap^{m,n} = \langle G \rangle : s^\infty .
\end{split}
\end{equation}
To prove this, we first argue that $\iota (G)$ forms a Gr\"{o}bner basis for the extended ideal $\CC[\bA]_s \, \langle G \rangle $ with respect to any fixed monomial order $<$.
This requires some care. 
Arguments based on Buchberger's algorithm, although valid over an arbitrary field, do not extend to arbitrary rings of scalars.
However, we may first extend the coefficient ring to the fraction field $\CC (A)$, extend $<$ to a monomial order $<'$ on $\CC (A)[\pp],$ and extend $\iota (G)$ to the set $\iota (G)' \subset \CC (A) [\pp ].$
The same argument as in~\Cref{thm:sum-ideal} shows that $\iota (G)'$ is a Gr\"{o}bner basis with respect to $<'.$
Moreover, the initial ideal $in_{<'} (\iota (G) ')$ is generated by the leading terms of $\iota (G)$ with respect to $<$, since the leading coefficients are units in either of the coefficient rings $\CC [A]_s$ or $\CC (A).$
Thus, $in_<(f)$ for any $f\in \CC[\bA]_s \, \langle G \rangle \subset \CC(\bA) \, \langle G \rangle $ is divisible by $in_< (g) \in \CC (A) [\pp ]$ for some $g\in \iota (G),$ showing that $\iota (G)$ is a Gr\"{o}bner basis for the extended ideal $\CC[\bA]_s \, \langle G \rangle  $. 

Finally, we verify that the contracted ideal $\iota^{-1} (\CC[\bA]_s \, \langle G \rangle )=\langle G \rangle : s^{\infty } $ satisfies the conditions for being the vanishing ideal of $\GammaAp^{m,n}$ given in the recognition criterion of~\Cref{prop:prop-recognition}:
\begin{enumerate}
\item The argument for the set-theoretic Condition 1 is the same as in the proof of~\Cref{thm:ba-set-theoretic}.

\item Since $s\in \frakm_A,$ it follows that $\langle G \rangle : s^\infty$ is saturated with respect to $\frakm_A.$
Moreover, \Cref{lem:grevlex} can be applied to show that the extended ideal $\CC[\bA]_s \, \langle G \rangle$ is saturated with respect to the extension of the irrelevant ideal $\CC[\bA]_s \, \frakm_\pp$.
This implies the contracted ideal $\langle G \rangle : s^\infty $ is saturated with respect to $\frakm_\pp $, since
\begin{align*}
\left( \langle G \rangle : s^{\infty } \right) : \frakm_\pp^\infty &= \left(\langle G \rangle : \frakm_\pp^{\infty } \right) : s^\infty \\
&= \iota^{-1} \left( \CC[\bA]_s \, \left( \langle  G \rangle  : \frakm_\pp^\infty \right) \right) \\
&\subset 
\iota^{-1} \left( \left( \CC[\bA]_s \, \langle  G \rangle  \right) : \left( \CC[\bA]_s \, \frakm_\pp^\infty \right) \right) \tag{\cite[Exercise 1.18]{AM69}}
\\
&= \iota^{-1} \left( \left( \CC[\bA]_s \, \langle  G \rangle  \right) \right) \tag{\Cref{lem:grevlex}}\\
&= \langle G \rangle : s^{\infty}.
\end{align*}
\item The extended ideal $\CC[\bA]_s \, \langle G \rangle $ is radical by~\Cref{prop:radical}, since it has a squarefree initial ideal.
Thus the contracted ideal $\iota^{-1} (\CC[\bA]_s \, \langle G \rangle )$ is also radical. 
\end{enumerate}
\end{proof}

\section{Open Problems and New Directions}
\label{sec:open-problems}
In this section we list some open questions about the atlas (\Cref{fig:full-diagram}) that we hope will guide future research, starting with specific questions related to those studied in this paper.

\subsection{Algebra}
\begin{enumerate}

\item In view of~\Cref{thm:AST-UGB}, it is natural to ask whether~\Cref{thm:Ap-GB} can be strengthened. 
Is the set of $\kfour$-focals is a universal Gr\"{o}bner basis of $\Ap^{m,1}$? 

\item The Gr\"{o}bner basis of $\Aqp^{m,1}$ (\Cref{sec:GB-appendix}) is complicated.
Is it possible to obtain a better understanding of its structure?
What is a universal Gr\"{o}bner basis of $\Aqp^{m,1}$? 

\item What are generators for $\Ap^{m,n}$ and $\Aqp^{m,n}$ for all $m$ and $n$?

\item What are set-theoretic and ideal-theoretic descriptions of the resectioning varieties?
\item All of our results about the specialized ideals $\Abarp^{m,n}$ and $\Abarqp^{m,n}$ make genericity assumptions implying that the camera arrangement $\bar{\bA}$ has pairwise distinct centers. 
What can be said when these assumptions are relaxed?  

\item What are the Euclidean distance degrees of the various varieties in~\Cref{fig:full-diagram}? In particular what is the Euclidean distance degree of the bundle adjustment and resectioning varieties? Note that, generally speaking the Euclidean distance degree is studied by considering nearness in {\em all} the coordinates of points on the variety. For us only nearness in $\pp$ matters. 
In general, it would be interesting to compare the nodes in the atlas via their ED degrees.
\end{enumerate}

\subsection{Geometry}
\begin{enumerate}
\item What can be said about the geometry of the multiprojective varieties in the atlas?
For instance, what are their multidegrees and their singular loci?

\item The atlas~(\Cref{fig:full-diagram}) is drawn in a vertically symmetric manner. 
This is reminiscent of a classical principle in multiview geometry known as \emph{Carlsson-Weinshall duality}~\cite{carlsson1998dual,trager2019coordinate}.
Various formulations of this principle express a duality between world points and cameras.
Here we ask what ideal-theoretic consequences of this duality are.
Can we deduce significant results about the ideals $\iideal{\qq , \pp }^{m,n}$, $\iideal{\bar{\qq}, \pp}^{m,n}$, etc, from what we already know about the ideals in~\Cref{fig:ideals}? 

\item How can the results related to $\Abarp^{m,1}$ and its Hilbert scheme in~\cite{AST13,LV20} be extended to $\Abarp^{m,n}$ and $\Abarqp^{m,n}$, beyond \Cref{thm:sum-ideal-GB}? Are there more Hilbert schemes lurking in the atlas? More precisely, when does the flat locus of the associated family give an open immersion into the Hilbert scheme as in \cite{LV20}? Does this always happen?

We note that the ideals $\Abarqp^{m,1}$ appearing in~\Cref{thm:Abarqp-gb} and~\Cref{thm:Abarqp-ideal} have the same $\ZZ^{m+1}$-graded Hilbert function and Hilbert series for any camera arrangement $\bar{\bA}$ with pairwise distinct centers.
This is immediate if $\bar{\bA}$ is ultra minor generic, since the initial ideals for a given term order are all the same. For $\bar{\bA}$ with pairwise distinct centers, this follows since the map $L_h$ defined in the proof preserves the grading. 
For $m=2$ cameras, the first few terms of this Hilbert series are 
\begin{align*}
1+4\,T_{\qq}+3\,T_{\pp_1}+3\,T_{\pp_2}+&10\,T_{\qq}^{2}+9\,T_{\qq}T_{\pp_1}+\\ &9\,T_{\qq}T_{\pp_2}+6\,T_{\pp_1}^{2}+8\,T_{\pp_1}T_{\pp_2}+6\,T_{\pp_2}^{2} + \ldots 
\end{align*}
This mirrors the corresponding result for the multiview variety given in~\cite[Theorem 3.7]{AST13}.
This could be a starting point for studying the Hilbert scheme parametrizing these ideals, parallel to the study of the ideals $\Abarp^{m,1}$ in~\cite{AST13}.

\item Degenerate camera configurations may yield very different ideals for the $\Abarqp^{m,n}$ than the specializations of $\Aqp^{m,n}$.
\Cref{ex:multiview-2-coincident} shows that the multiview variety of a tuple of concentric cameras has dimension $2$; it is really parametrizing a tuple of planar homographies. This tells us that nice behavior for $\Aqp^{m,n}$ almost ensures that we cannot capture specializations properly in degenerate cases, or, equivalently, that $\Aqp^{m,n}$ does not represent an obvious moduli problem. Is there a moduli problem that we can describe that \emph{does\/} naturally give rise to $\Aqp^{m,n}$ or some modification of it? What happens if we attempt to flatten après Raynaud and Gruson~\cite{raynaud1971criteres}? Can the result be computed or interpreted? More generally, what is the ``correct'' moduli problem for capturing degenerations?

\end{enumerate}

\subsection{Generalizations}
\begin{enumerate}
\item  When constructing the atlas studied in this paper, we assumed the simplest possible camera model, the {\em projective camera}, where the only constraint is that the camera matrix be rank 3. There are a number of specializations of this camera model which are of significant practical interest, each of which will lead to their own atlas.
We mention a few:
\begin{enumerate}
    \item Euclidean cameras, $A_i = \begin{bmatrix} R_i & t_i\end{bmatrix}$, where $R_i \in SO(3)$ and $t_i \in \RR^3$.
    \item Calibrated cameras with varying intrinsics, $A_i = K_i\begin{bmatrix} R_i & t_i\end{bmatrix}$, where $R_i \in SO(3)$ and $t_i \in \RR^3$. Here $K_i$ is the so called calibration/intrinsics matrix, which is an upper triangular matrix with positive diagonals.
    \item Calibrated cameras with common intrinsics, $A_i = K\begin{bmatrix} R_i & t_i\end{bmatrix}$, where $R_i \in SO(3)$ and $t_i \in \RR^3$. Here all cameras share the same calibration/intrinsics matrix $K$. This is the case for example when we are using a fixed focus video camera.
\end{enumerate}

The camera models described above are {\em linear}, and they do not account for the non-linear distortion caused by lenses. Accounting for them gives rise to polynomial and rational cameras and their corresponding atlases.

\item Just like atlases can be studied as we vary the kind of camera, we can also vary the scene {\em objects} being imaged. The first would be to replace world points ($\qq$) and image points ($\pp$) with lines~\cite{breiding2022line}. This is a well studied topic in 3D computer vision~\cite{taylor1995structure, bartoli2005structure}.

We can go further and consider cameras imaging a world where the objects are quadrics and their {\em images} are conics in the image plane~\cite{kahl1999affine,de1993conics}. Unlike the point and line case, where the world objects map directly to the image space objects, in this case we take a quadric surface to the boundary of its shadow in the plane.

More precisely, given a camera $\PP^3\dashrightarrow\PP^2$, a general quadric $Q$ in $\PP^3$ has the property that the camera defines a finite morphism $Q\to\PP^2$ whose set of critical values (the branch curve) is a conic curve $C\subset\PP^2$. This defines a rational map of linear spaces $\PP^{9}\dashrightarrow\PP^{5}$ from the space of quadrics in $\PP^3$ to the space of conics in $\PP^2$.  Similarly, a tuple of cameras defines a rational map in this way from $\PP^9\to\Hilb_{(\PP^2)^m}$.
When $m=1$, the relevant component of the Hilbert scheme of $\PP^2$ is the linear space of conics.

\item The atlas can also be generalized by replacing the matrix-vector multiplication $A q $ with arbitrary matrix-matrix multiplication, i.e. 
    \begin{equation}
    C_{ij}  \sim A_{i} B_j        
    \end{equation}
    where $A_{i} \in \PP^{\alpha \times \beta }, B_{j} \in \PP^{\beta  \times \gamma}$ and $C_{ij} \in \PP^{\alpha \times \gamma}$.  This seems like a very general object and worthy of study on its own.    
\end{enumerate}

\bibliographystyle{amsplain}
\bibliography{refs.bib}
\newpage 
\appendix 
\section{Dimension counts}
\label{sec:dimension-counts}
\begin{proposition}
\label{prop:dim-counts}
Below, $\bar{\bA} \in (\PP^{11})^m, \bar{\qq} \in (\PP^3)^n, \bar{\pp} \in \Gamma^{m,n}_{\pp}$ 
are generic whenever they appear.
\begin{align}
\dim \Gamma^{m,n}_{\bA,\qq,\pp} &= 3n + 11m \label{eq:dim-Aqp}\\
\dim \Gamma^{m,n}_{\bA,\pp} &= \min (2mn + 11m, 3n+11m) \label{eq:dim-Ap}\\
\dim \Gamma^{m,n}_{\qq, \pp} &= \min(3n+2mn, 3n + 11m) \label{eq:dim-qp}\\
\dim \Gamma^{m,n}_{\pp} &= \min(2mn, 11m + \max(3n - 15, 0)) \label{eq:dim-p}\\
\dim \Gamma^{m,n}_{\bar{\bA}, \qq, \pp} &= \dim \Gamma^{m,n}_{\bA, \qq, \pp} - \dim \Gamma^{m,n}_\bA  = 3n \label{eq:dim-Abarqp}
\\
\dim \Gamma^{m,n}_{\bA, \qq, \bar{\pp}} &= \dim \Gamma^{m,n}_{\bA, \qq, \pp} - \dim \Gamma^{m,n}_{\pp} \label{eq:dim-Aqpbar}\\
\dim \Gamma^{m,n}_{\bA, \bar{\qq}, \pp} &= \dim \Gamma^{m,n}_{\bA, \qq , \pp} - \dim \Gamma^{m,n}_{\qq} = 11m \label{eq:dim-Aqbarp}\\
\dim \Gamma^{m,n}_{\bar{\bA}, \pp} &= \dim \Gamma^{m,n}_{\bA, \pp} - \dim \Gamma^{m,n}_{\bA} = \min (2mn, 3n)\label{eq:dim-Abarp}\\
\dim \Gamma^{m,n}_{\bA, \bar{\pp}} &= \dim \Gamma^{m,n}_{A, \pp} - \dim \Gamma^{m,n}_{\pp}   \label{eq:dim-Apbar}\\
\dim \Gamma^{m,n}_{\qq, \bar{\pp}} &= \dim \Gamma^{m,n}_{\qq , \pp } - \dim \Gamma^{m,n}_{\pp} \label{eq:dim-qpbar}\\
\dim \Gamma^{m,n}_{\bar{\qq} , \pp} &= \dim \Gamma^{m,n}_{\qq, \pp} - \dim \Gamma^{m,n}_{\qq} \label{eq:dim-qbarp}
\end{align}
\end{proposition}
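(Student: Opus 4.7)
The plan is to derive every formula from repeated applications of the fiber-dimension theorem, building outward from the root $\Gamma^{m,n}_{\bA, \qq, \pp}$ of the atlas. The starting point is that the coordinate projection
\[
\pi : \Gamma^{m,n}_{\bA, \qq, \pp} \longrightarrow (\PP^{11})^m \times (\PP^3)^n
\]
is birational onto its image: wherever the imaging map is defined, each $\bar{p}_{ij}$ is uniquely determined by $(\bar{A}_i, \bar{q}_j)$. Since the target is irreducible of dimension $11m + 3n$, this proves \eqref{eq:dim-Aqp} and, as a byproduct, shows $\Gamma^{m,n}_{\bA, \qq, \pp}$ is irreducible; I will use this irreducibility throughout.

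For the non-specialized varieties \eqref{eq:dim-Ap}--\eqref{eq:dim-p}, I would view each as the image of $\Gamma^{m,n}_{\bA, \qq, \pp}$ under an appropriate coordinate projection and compute the dimension of the generic fiber. For \eqref{eq:dim-Ap}, the fiber over $(\bar{\bA}, \bar{\pp})$ parametrizes choices of $\bar{\qq}$ with $\bar{A}_i \bar{q}_j \sim \bar{p}_{ij}$: when $m \geq 2$, each $\bar{q}_j$ is generically pinned down by triangulation (fiber dimension $0$), giving image dimension $3n + 11m$; when $m = 1$, each $\bar{q}_j$ ranges over a preimage line (fiber dimension $n$), so the image fills the ambient with dimension $2mn + 11m$. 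For \eqref{eq:dim-qp}, the fiber over $(\bar{\qq}, \bar{\pp})$ requires each $\bar{A}_i$ to satisfy $2n$ linear conditions on its $11$ parameters, so has dimension $m \cdot \max(11 - 2n, 0)$, yielding the stated min formula.

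The most delicate formula is \eqref{eq:dim-p}. Here I would track the $\PGL_4$ gauge action $(H, \bar{\bA}, \bar{\qq}) \mapsto (\bar{\bA} H^{-1}, H \bar{\qq})$, under which the imaging map factors. The fiber over a generic $\bar{\pp}$ in the image always contains the $\PGL_4$-orbit of any reconstruction, which has the full dimension $15$ whenever the joint stabilizer is trivial (for instance when $m \geq 2$ with pairwise distinct centers, or when $n \geq 5$ points are in general position). In the overdetermined regime $2mn \geq 11m + 3n - 15$ the fiber equals this orbit, so the image has dimension $11m + 3n - 15$; in the underdetermined regime the image fills the ambient of dimension $2mn$. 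The correction $\max(3n - 15, 0)$ handles the small-$n$ case, where $2mn \leq 11m$ already forces the min to evaluate to $2mn$.

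The specialization formulas \eqref{eq:dim-Abarqp}--\eqref{eq:dim-qbarp} then follow by a single mechanism: each overline variety is the generic fiber of the coordinate projection from the corresponding non-specialized variety above it in the atlas. Dominance of each such projection follows from construction, and the irreducibility established in the first step guarantees equidimensional generic fibers. The fiber-dimension theorem then yields $\dim(\text{overline variety}) = \dim(\text{source}) - \dim(\text{image})$, which is exactly the content of \eqref{eq:dim-Abarqp}--\eqref{eq:dim-qbarp}. The main obstacle, as I see it, lies in justifying the fiber computation underlying \eqref{eq:dim-p}: the interplay between parameter count, constraint count, and the $\PGL_4$ gauge group must be tracked carefully when $m$ or $n$ is small so as to identify exactly when the orbit-equals-fiber assertion holds. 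Once this is in place, the remaining formulas follow essentially mechanically.
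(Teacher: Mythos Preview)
Your plan matches the paper's approach almost exactly: both start from the birational projection $\Gamma^{m,n}_{\bA,\qq,\pp}\to(\PP^{11})^m\times(\PP^3)^n$ for \eqref{eq:dim-Aqp}, both compute \eqref{eq:dim-Ap} and \eqref{eq:dim-qp} by counting linear conditions on $\qq$ (resp.~$\bA$) in the fiber, and both derive \eqref{eq:dim-Abarqp}--\eqref{eq:dim-qbarp} from the fiber-dimension theorem applied to the relevant coordinate projections.

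The only substantive difference is in \eqref{eq:dim-p}. You invoke the $\PGL_4$ gauge action and assert that in the overdetermined regime the generic fiber equals a single orbit, while acknowledging this is the main obstacle. The paper makes this step completely explicit and does not rely on a general ``fiber equals orbit'' statement. It splits into two regimes: for $n<6$ it uses an explicit projective change-of-basis matrix to show that the fiber of $\Gamma^{m,n}_{\qq,\pp}\to\Gamma^{m,n}_\pp$ has dimension $3n$, hence $\dim\Gamma^{m,n}_\pp=2mn$; for $n\ge 6$ it identifies the minimal cases $(m,n)\in\{(2,7),(3,6)\}$ and the borderline case $(2,6)$, verifies the claimed dimensions there by a direct Jacobian rank check, and then bootstraps to all larger $(m,n)$ by bounding the fiber dimension between $15$ (from the $\PGL_4$ orbit) and the fiber dimension of a minimal subproblem. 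So where your sketch leaves a parameter-versus-constraint count to be ``tracked carefully,'' the paper replaces that count by finitely many computable rank verifications plus a monotonicity-and-projection argument. Your plan is sound, but you should be aware that a naive orbit-equals-fiber argument will not close without something like these base-case checks.
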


\begin{remark}
For $m$ and $n$ sufficiently large, the formulas above involving $\min $ and $\max $ expressions can be simplified as follows:
\begin{align}
\dim \Gamma^{m,n}_{\bA,\pp} = \dim \Gamma^{m,n}_{\qq, \pp} &=  3n+11m \\
\dim \Gamma^{m,n}_{\pp} &= 3n + 11m - 15\\
\dim \Gamma^{m,n}_{\bar{\bA}, \qq, \pp} = \dim \Gamma^{m,n}_{\bar{\bA}, \pp} &=   3n 
\\
\dim \Gamma^{m,n}_{\bA, \qq, \bar{\pp}} = \dim \Gamma^{m,n}_{\bA, \bar{\pp}} = \dim \Gamma^{m,n}_{\qq, \bar{\pp}} &= 15 \\
\dim \Gamma^{m,n}_{\bA, \bar{\qq}, \pp} = \dim \Gamma^{m,n}_{\bar{\qq} , \pp}  &=  11m
\end{align}
\end{remark}
\begin{proof}
$ $\\[1em]
\eqref{eq:dim-Aqp}: This follows at once from the birational equivalence
\begin{align*}
\pi_\pp : \GammaAqp^{m,n}&\to (\PP^{11})^m \times (\PP^3)^n \\
(\bar{\bA}, \bar{\qq}, \bar{\pp}) &\mapsto (\bar{\bA}, \bar{\qq}) .
\end{align*}
\eqref{eq:dim-Ap}: Consider the projection
\begin{align*}
\pi_\qq : \GammaAqp^{m,n}&\to (\PP^{11})^m \times (\PP^2)^{mn}\\
(\bar{\bA}, \bar{\qq}, \bar{\pp}) &\mapsto (\bar{\bA}, \bar{\pp}).
\end{align*}
A fiber $\pi_\qq^{-1} (\bar{\bA}, \bar{\pp})$ can be identified with the projective linear space of all $q$ satisfying $A_i q_j \sim p_{i j}$.
Equivalently, each of the $2mn$ matrices $\left(\BM A_i q_j & p_{i j}\EM\right)$ is of rank one. 
For generic $(\bar{\bA}, \bar{\pp}) \in \Gamma^{m,n}_{\bA,\pp },$ the $2\times 2$ minors impose $2$ linear conditions on $\bar{\qq},$ so that $\pi_\qq^{-1} (\bar{\bA}, \bar{\pp})$ is a projective linear space of dimension $\max (3n-2mn, 0).$
Hence, using~\eqref{eq:dim-Aqp} and the fiber-dimension theorem, 
\begin{align*}
\dim \Gamma^{m,n}_{\bA,\pp} &= \dim \Gamma^{m,n}_{\bA,\qq, \pp} - \max (3n-2mn, 0)\\
&= \min (2mn + 11m, 3n + 11m).
\end{align*}
\eqref{eq:dim-qp}: Similar to~\eqref{eq:dim-Ap}, consider 
\begin{align*}
\pi_\bA : \Gamma^{m,n}_{\bA,\qq. \pp} &\to (\PP^{11})^m \times 
(\PP^2)^{mn} \\
(\bar{\bA}, \bar{\qq}, \bar{\pp}) &\mapsto (\bar{\qq}, \bar{\pp}).
\end{align*}
For generic $(\bar{\qq}, \bar{\pp}) \in \Gamma^{m,n}_{\qq, \pp},$ the fibers $\pi_\bA^{-1} (\bar{\qq}, \bar{\pp})$ are projective linear spaces of dimension $\max(11m - 2mn, 0)$ which are defined by the $\bA$-linear $2\times 2$ minors of $\left(\BM A_i q_j & p_{i j}\EM\right).$ 
Hence
\begin{align*}
\dim \Gamma^{m,n}_{\qq,\pp} &= \dim \Gamma^{m,n}_{\bA,\qq, \pp} - \max (11m-2mn, 0)\\
&= \min (3n + 2mn, 3n + 11m).
\end{align*}
\eqref{eq:dim-p}:
For generic $q_1, \ldots , q_5 \in \PP^3,$
define 
$\sigma = \{ \sigma_1, \ldots , \sigma_4 \} \subset [5],$
\[[\sigma_1 \, \sigma_2 \, \sigma_3 \, \sigma_4]^{-1} := 
\det \left(\BM q_{\sigma_1} & q_{\sigma_2} & q_{\sigma_3}  & q_{\sigma_4}\EM\right)^{-1}, 
\]
and consider the \emph{projective change of basis matrix}
\begin{equation}
\label{eq:cremona-P3}
\begin{split}
C_{q_1, \ldots , q_5} = 
&
\, \diag \left(
[5 2 3 4]^{-1}, \, [1 5 3 4]^{-1}, \,
[1 2 5 4]^{-1}, \,
[1 2 3 5]^{-1}
\right)
\cdot \left(\BM q_1 & q_2 & q_3 & q_4 \EM\right)^{-1}
\end{split}
\end{equation}
where $\bullet^{-1}$ denotes matrix inversion.
When defined, the projective transformation defined by $C_{q_1, \ldots , q_5}$ maps $q_1 \ldots q_5$ onto the \emph{standard projective basis}:
\begin{align*}
C_{q_1, \ldots , q_5} \cdot q_1 &\sim e_1, \\ 
C_{q_1, \ldots , q_5} \cdot q_2 &\sim e_2, \\ 
C_{q_1, \ldots , q_5} \cdot q_3 &\sim e_3, \\
C_{q_1, \ldots , q_5} \cdot q_4 &\sim e_4, \\
C_{q_1, \ldots , q_5} \cdot q_5 &\sim e_1 + e_2 + e_3 + e_4.
\end{align*}
Consider the projection
\begin{align*}
\pi_{\bA, \qq} : \GammaAqp^{m,n} &\to \Gamma^{m,n}_\pp \\
(\bar{\bA}, \bar{\qq}, \bar{\pp}) &\mapsto \bar{\pp} .
\end{align*}
We observe a version of \emph{projective ambiguity}~\cite[p~265]{HZ04}, stating that the fibers of $\pi_{\bA, \qq}$ are invariant under the action of $ \PGL_4$ described in~\Cref{sec:genericity}.
Suppose first that $n<6$.
We need to show $\dim \Gamma^{m,n}_\pp = 2mn.$
Let $\bar{\pp} \in \Gamma^{m,n}_\pp$
be generic and suppose $(\bar{\bA}, \bar{\qq}, \bar{\pp}) \in \pi_{\bA, \qq}^{-1} (\bar{\pp}).$
Then for generic $\bar{\qq} ' \in (\PP^3)^n,$ we may find $(\bar{\bA}', \bar{\qq} ' , \bar{\pp} ) \in \pi_{\bA, \qq}^{-1} (\bar{\pp})$ by projective change of basis $H = C_{\tilde{\qq} '}^{-1} C_{\tilde{\qq}},$ where $\tilde{\qq}$ and $\tilde{\qq}' $ extend $\bar{\qq}$ and $\bar{\qq} ' $ to projective bases when $n<5.$
In other words, the generic fiber of $\Gamma^{m,n}_{\qq, \pp} \to \Gamma^{m,n}_\pp$ has dimension $3n.$
Applying~\eqref{eq:dim-Aqp},
\begin{align*}
\dim \Gamma^{m,n}_\pp &= \dim \Gamma^{m,n}_{\qq, \pp} - 3n = (3n + 2mn) - 3n = 2mn.
\end{align*}
Now suppose $n\ge 6$.
For $m=1$ camera,~\eqref{eq:dim-p} asserts that $\dim \Gamma^{m,n}_\pp = 2 n ,$ which follows since there are no constraints on image points.
Otherwise, observe that the quantity
\begin{align*}
\label{eq:codim-p}
2mn - (11m + 3n - 15) &= (2m-3) n - 11m + 15 = (2n - 11) m + 15 - 3n
\end{align*}
is increasing in $n$ for fixed $m\ge 2$ and increasing in $m$ for fixed $n\ge 6.$
Moreover, this quantity equals zero precisely in the \emph{minimal cases} $(m,n)=(2,7), \, (3, 6).$
Thus,~\eqref{eq:dim-p} asserts that $\dim \Gamma^{m,n}_{\pp } = 11m + 3n - 15$ whenever either $m\ge 2$ and $n\ge 7$ or $m\ge 3$ and $n\ge 6.$
This leaves one exceptional case for $n\ge 6,$ which is $(m,n) = (2,6)$; here, to show that $\dim \Gamma^{m,n}_{\pp} = 2mn = 24$, it suffices to verify that the Jacobian of $\pi_{\bA, \qq}$ evaluated at some point in local coordinates has rank $24.$ 
The same Jacobian check gives us $\dim \Gamma^{m,n}_{\pp } = 11m + 3n - 15$ for the two minimal cases; equivalently, $\dim \pi_{\bA, \qq}^{-1} (\bar{\pp})= 15$ for generic $\bar{\pp} \in \Gamma^{m,n}_\pp .$
Finally, if either $m\ge 2$ and $n\ge 8$ or $m\ge 3$ and $n\ge 7,$ note that the fiber $\pi_{\bA,\qq}^{-1} (\bar{\pp}) $ for generic $\bar{\pp} \in \Gamma^{m,n}_\pp$ is nonempty, and thus has dimension at least $15$ by projective ambiguity.
Since $\pi_{\bA,\qq}^{-1} (\bar{\pp}) $ projects onto a fiber for one of the minimal cases, we also have $\dim \pi_{\bA, \qq}^{-1} (\bar{\pp}) \le 15.$
Thus 
\begin{align*}
\dim \Gamma^{m,n}_\pp &= \dim \Gamma^{m,n}_{\bA, \qq, \pp} - \dim \dim \pi_{\bA, \qq}^{-1} (\bar{\pp}) = 11m + 3n - 15.
\end{align*}
\eqref{eq:dim-Abarqp}--\eqref{eq:dim-qbarp} In all cases, $\Gamma^{m,n}_{\bar{X}, Y}$ is the generic fiber of $\Gamma^{m,n}_{X ,Y} \to \Gamma^{m,n}_{X},$ so these formulas follow from the fiber dimension theorem and~\eqref{eq:dim-Aqp}--\eqref{eq:dim-p}.
\end{proof}

\section{Miscellaneous Proofs}
\label{sec:proofs appendix}

\subsection{Generic Cameras}
\begin{proposition} \label{prop:easy directions}
If a camera arrangement $\bar{\bA} = (\bar{A}_1,\ldots \bar{A}_m)$ is ultra minor generic 
then it is minor generic, and if $\bar{\bA}$ is minor generic, then it has pairwise distinct centers.
\end{proposition}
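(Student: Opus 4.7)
The first implication is immediate from the definitions: \Cref{def:ultra-minor-generic} requires that \emph{every} $k\times k$ minor of $\stacked{\bar{A}_1}{\bar{A}_m}$ be nonzero for \emph{every} $k\in [4],$ while \Cref{def:minor-generic} is merely the special case $k=4.$ So no real work is needed here beyond pointing to the two definitions.

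For the second implication I plan to argue the contrapositive. Suppose cameras $\bar{A}_i$ and $\bar{A}_j$ with $i\ne j$ share a common center $c \in \PP^3,$ realized as a nonzero vector in $\CC^4$ with $\bar{A}_i c = \bar{A}_j c = 0.$ Then each of the six rows of $\bar{A}_i$ and $\bar{A}_j,$ viewed as vectors in $\CC^4$ after transposition, is orthogonal to $c$ and therefore lies in the $3$-dimensional subspace $c^{\perp} := \{ v \in \CC^4 \mid v^\top c = 0 \}.$

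The key observation is that these six transposed rows appear as columns of the $4\times 3m$ matrix $\stacked{\bar{A}_1}{\bar{A}_m}.$ Selecting any $4$ of the $6$ columns yields a $4\times 4$ submatrix whose columns all lie in the $3$-dimensional space $c^{\perp},$ hence is singular. This produces a vanishing $4\times 4$ minor of $\stacked{\bar{A}_1}{\bar{A}_m},$ contradicting minor genericity. There is no serious obstacle here: both implications reduce to short linear-algebra arguments once the correspondence between rows of the $\bar{A}_i$ and columns of the stacked matrix is made explicit.
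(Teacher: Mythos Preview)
Your proposal is correct and follows essentially the same approach as the paper. The paper's proof of the second implication is slightly more terse: it observes directly that minor genericity forces the $4\times 6$ submatrix $\left(\begin{array}{c|c}\bar{A}_i^\top & \bar{A}_j^\top\end{array}\right)$ to have rank $4$ and concludes that the centers are distinct, whereas you spell out the contrapositive and the linear algebra behind that last step explicitly.
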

\begin{proof}
If $\bar{\bA}$ is ultra minor generic, then all $k \times k$ minors of 
$\stacked{\bar{A}_1}{\bar{A}_m}$ are nonzero for any $k \in [4]$. In particular, 
all $4 \times 4$ minors are nonzero and $\bar{\bA}$ is minor generic. If 
$\bar{\bA}$ is minor generic, then for any $1 \leq i < j \leq m$, 
the $4\times 6$ matrix $\left( \begin{array}{c|c}\hspace{-.4em}{\bar{A}_i}^\top&{\bar{A}_j}^\top\end{array}\hspace{-.4em}\right)$ has rank $4.$
This implies that $\bar{A}_i$ and $\bar{A}_j$ have district centers.
\end{proof}

\begin{theorem} 
\begin{enumerate}
    \item A camera arrangement $\bar{\bA}$ has pairwise distinct centers if and only if it is equivalent to a minor generic camera arrangement under the group action  \eqref{eq:small group-action}.
    \item A camera arrangement $\bar{\bA}$ is minor generic if and only if it is equivalent to an ultra minor generic arrangement under the group action  \eqref{eq:PGL4  group-action}.
    \item A camera arrangement $\bar{\bA}$ has pairwise distinct centers if and only if it is equivalent to an ultra minor generic camera arrangement under the group action \eqref{eq:big group-action}. 
\end{enumerate}
\end{theorem}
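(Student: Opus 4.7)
My plan is to handle each biconditional by establishing the forward and reverse directions separately. The reverse directions in all three parts follow immediately from the preceding Proposition (ultra minor generic implies minor generic implies pairwise distinct centers) combined with the fact that pairwise distinct centers is preserved by all three group actions: for~\eqref{eq:small group-action}, left multiplication by $H_i \in \PGL_3$ does not alter $\ker(\bar{A}_i)$; for~\eqref{eq:PGL4 group-action}, right multiplication by $H^{-1} \in \PGL_4$ maps $\ker(\bar{A}_i)$ to $H \cdot \ker(\bar{A}_i)$, which sends distinct lines in $\CC^4$ to distinct lines; and~\eqref{eq:big group-action} is the composition of these.

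For the forward directions, the common strategy will be to view each relevant minor of the transformed matrix as a polynomial in the group parameters and to show it is not identically zero; generic group elements then simultaneously make all such minors nonzero, moving $\bar{\bA}$ to an arrangement with the stronger property. For Part 2 this is cleanest: after the $\PGL_4$ action, $\stacked{\bar{A}_1}{\bar{A}_m}$ becomes $(H^{-1})^\top \stacked{\bar{A}_1}{\bar{A}_m}$, so by Cauchy-Binet each $k \times k$ minor of the new matrix is a linear combination of $k \times k$ minors of the original matrix at the same columns. For $k = 4$ this is just $\det(H^{-1})$ times the corresponding original minor, nonzero by minor genericity. For $k < 4$, minor genericity forces any $k$ columns of $\stacked{\bar{A}_1}{\bar{A}_m}$ to be linearly independent in $\CC^4$ (extend to four columns with a nonzero $4\times 4$ minor when $m \geq 2$, or use $\rank \bar{A}_1 = 3$ when $m = 1$), so some $k \times k$ subdeterminant at those columns is nonzero, and the Cauchy-Binet expansion is a nontrivial polynomial in $H^{-1}$.

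Part 1 is more delicate. After~\eqref{eq:small group-action}, the matrix becomes $\stacked{\bar{A}_1}{\bar{A}_m}\, \diag(H_1^\top, \ldots, H_m^\top)$, and Cauchy-Binet applied block-wise expresses each $4 \times 4$ minor as a sum indexed by row subsets $S_i \subseteq \{1, 2, 3\}$ with $|S_i| = k_i$, where $(k_1, \ldots, k_m)$ is the block pattern of the chosen columns. To show the polynomial is nonzero it suffices to exhibit one admissible choice of $S_i$'s such that the corresponding four rows of the $\bar{A}_i$'s span $\CC^4$. I will verify this combinatorial claim by case analysis on the pattern, using that the row spaces $W_i$ of the $\bar{A}_i$'s are pairwise distinct hyperplanes in $\CC^4$. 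Pattern $(3, 1, 0, \ldots)$ is immediate: any row of $\bar{A}_j$ lying outside $W_i$ extends the three rows of $\bar{A}_i$ to a basis, and such a row exists since $W_j \not\subseteq W_i$. The main obstacle is pattern $(2, 2, 0, \ldots)$: independence of the four rows is equivalent to $\ell_1 \neq \ell_2$, where $\ell_1$ and $\ell_2$ are the intersections of the respective 2-row spans with $W_j$ and $W_i$, both lying in the 2-dimensional subspace $W_i \cap W_j$. Using that the three 2-row spans of a rank-3 matrix have trivial common intersection, not all three candidate lines $\ell_1^{(a,b)}$ (nor all three $\ell_2^{(c,d)}$) can coincide in $W_i \cap W_j$, so some admissible pair achieves $\ell_1 \neq \ell_2$. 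Patterns $(2, 1, 1, 0, \ldots)$ and $(1, 1, 1, 1, 0, \ldots)$ are handled by inductive subspace selection, using pairwise distinctness of the $W_i$'s at each step.

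Part 3 follows by composing Parts 1 and 2: starting from $\bar{\bA}$ with pairwise distinct centers, Part 1 produces $(H_1, \ldots, H_m) \in (\PGL_3)^m$ making the arrangement minor generic; Part 2 applied to this new arrangement produces $H \in \PGL_4$ making it ultra minor generic; and the combined action~\eqref{eq:big group-action} encompasses both moves simultaneously.
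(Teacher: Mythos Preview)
Your approach is essentially the same as the paper's, and the forward directions are handled correctly. For Part~2 you give the same Cauchy--Binet argument the paper uses, and for Part~3 you compose Parts~1 and~2 exactly as the paper does. For Part~1 the paper simply cites~\cite[Lemma~3.6]{APT21}, whereas you supply an explicit case analysis on the block pattern $(k_1,\ldots,k_m)$ of the chosen columns; your treatment of the $(3,1)$ and $(2,2)$ patterns is fine, and the claims for $(2,1,1)$ and $(1,1,1,1)$ are true, though ``inductive subspace selection'' undersells the work needed---the naive greedy choice can get stuck at the last step, and one must vary earlier choices (or argue via the associated multilinear form in the $(1,1,1,1)$ case) to see that some choice succeeds.

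There is one genuine slip in your reverse directions. You write that all three follow from ``ultra minor generic $\Rightarrow$ minor generic $\Rightarrow$ pairwise distinct centers'' together with invariance of \emph{pairwise distinct centers} under each group action. This suffices for Parts~1 and~3, whose conclusions are ``pairwise distinct centers,'' but for Part~2 the conclusion is \emph{minor generic}, and invariance of pairwise distinct centers is too weak. What you need is that minor genericity itself is preserved under the $\PGL_4$ action~\eqref{eq:PGL4 group-action}; this is immediate since, as you yourself note in the forward direction, the $4\times 4$ minors of $\stacked{\bar A_1}{\bar A_m}$ scale by $\det(H^{-1})\ne 0$ under that action. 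Once you insert this one-line observation, the reverse direction of Part~2 is complete.
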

\begin{proof}
\begin{enumerate}
    \item This statement was proved in ~\cite[Lemma 3.6]{APT21}. 
    
    \item We already saw in \Cref{prop:easy directions} that ultra minor genericity implies minor genericity. For the other direction, fix a minor generic arrangement $(\bar{A}_1,\ldots \bar{A}_m)$. 
Let $\sigma \in \binom{[4]}{k}$ and $\tau \in \binom{[3m]}{k}$ be subsets indexing the rows and columns of some $k\times k$ minor of 
$\stacked{\bar{A}_1}{\bar{A}_m}$.
Using the Cauchy-Binet theorem,
\begin{equation}
    \label{eq:cauchy-binet-H}
    \begin{split}
    \det \left( \stacked{(\bar{A}_1 H)}{(\bar{A}_m H)} [\sigma, \tau] \right)
    =\\[0.618em]
        \displaystyle\sum_{\upsilon \in \binom{[4]}{k}} 
        \det \left( H^\top[\sigma, \upsilon] \right) \cdot 
        \det \left(\stacked{\bar{A}_1}{\bar{A}_m}[\upsilon,\tau] \right).
      \end{split}
\end{equation}
The minors $\det(\stacked{\bar{A}_1}{\bar{A}_m}[\upsilon,\tau])$ which occur in this sum range over all $k\times k$ minors of the $4 \times k$ matrix $\stacked{\bar{A}_1}{\bar{A}_m}[[4], \tau].$
This $4\times k$ matrix has full rank $k$ since if it did not, we could add $4-k$ additional columns from $\stacked{\bar{A}_1}{\bar{A}_m}$ to get a rank-deficient $4\times 4$ matrix, contradicting our assumption that 
$\stacked{\bar{A}_1}{\bar{A}_m}$ is minor generic.
Thus, $\det \stacked{\bar{A}_1}{\bar{A}_m}[\upsilon,
\tau] \ne 0$ for some $\upsilon \in \binom{[4]}{k}.$
Hence the expressions in \eqref{eq:cauchy-binet-H} are not zero, and setting ~\eqref{eq:cauchy-binet-H} to $0$ we obtain a  hypersurface in $\GL_4$.
Any choice of $H$ lying outside the union of the finitely many hypersurfaces, obtained by varying 
over all $k, \sigma, \tau$ yields an arrangement $(A_1 H, \ldots , A_m H)$ satisfying the conclusion.
    
\item This statement follows from the first two.
    
\end{enumerate}
\end{proof}

\subsection{Proof of~\Cref{prop:radical}, part 1}

\begin{proof}
Let $I = \langle g_1, \ldots , g_s \rangle .$
By definition, $g_1, \ldots ,g_s$ forming a Gr\"{o}bner basis with respect to $<$ means that \[
in_< (I) = \langle in_< (g_1), \ldots , in_< (g_s) \rangle .\]
First, we verify that this monomial ideal in $R[x_1,\ldots , x_k]$ is radical. 
To see this, let $in_<(g_s) = x_{i_1} \cdots x_{i_l}$ and note that 
\[
in_< (I) = \displaystyle\bigcap_{j=1}^l \langle in_< (g_1) , \ldots , in_< (g_{s-1}), x_{i_j} \rangle .
\]
Iterating this argument, we obtain $in_< (I)$ as an intersection of prime ideals generated by subsets of the variables.

Now, to show that $I = \langle g_1, \ldots , g_s \rangle $ is radical, suppose that $f \in  \sqrt{I},$ so that $f^n \in I$ for some positive integer $n.$ We need to argue that $f \in I$.
We have 
\[
in_<(f)^n = in_< (f^n) \in in_< (I)
\quad 
\Rightarrow
\quad 
in_< (f) \in in_< (I),
\]
where the first equality of leading terms uses the fact that $R$ is a domain and the implication holds since $in_<(I)$ is radical.
Thus there exists $f_0 \in I \subseteq \sqrt{I}$ such that $in_< (f_0) = in_< (f).$
Now $f-f_0 \in \sqrt{I}$ is an element whose leading term is strictly smaller than $in_<(f).$
Replacing $f$ with $f-f_0$ and iterating the argument, we obtain 
$f_0, \ldots , f_l \in I$ such that $f = f_0 + \cdots + f_l \in I$. 
\end{proof}

\subsection{Proof of the recognition criterion: \Cref{prop:prop-recognition}}

\begin{proof}
A point $x \in \PP^{n_1} \times \cdots \times \PP^{n_k}$ may be represented in homogeneous coordinates by a point $\hat{x}$ in the affine space $\CC^{n_1+1} \times \cdots \times \CC^{n_k+1}.$  
Consider the \emph{affine cone}
\[
\hat{X} = \cl \{ \hat{x} \in \CC^{n_1+1} \times \cdots \times \CC^{n_k+1} \suchthat x \in X \}.
\]
This is an \emph{affine} variety whose vanishing ideal is precisely the vanishing ideal of $X.$

Suppose that Conditions 1--3 are satisfied; we must show that $\langle f_1, \ldots , f_s \rangle $ is the vanishing ideal of $X,$ or equivalently that of $\hat{X}.$
Condition 3 and the Nullstellensatz~\cite[Ch.~4, \S 2]{CLO15} imply that $\langle f_1, \ldots , f_s \rangle $ is the vanishing ideal of the affine variety
\[
\Vaff (f_1, \ldots , f_s) = \{ \hat{x} \in \CC^{n_1+1} \times \cdots \times \CC^{n_k+1} \suchthat f_1(\hat{x})=\cdots = f_s(\hat{x}) \}.
\]
Moreover, Conditions 2 and 3 together with standard properties of ideal quotients and saturation~\cite[Ch.~4, \S 4]{CLO15} imply $\langle f_1, \ldots , f_s \rangle $ is the vanishing ideal of
\[
\cl \left( \Vaff (f_1, \ldots , f_s) \setminus \Vaff ( \frakm_{\xx_1} \cap \cdots \cap \frakm_{\xx_k})\right).
\]
Since affine varieties are uniquely determined by their vanishing ideals, it is now enough to observe the following equality, which holds whenever Condition 1 is satisfied:
\[
\hat{X} = \cl \left( \Vaff (f_1, \ldots , f_s) \setminus \Vaff ( \frakm_{\xx_1} \cap \cdots \cap \frakm_{\xx_k}) \right).
\]
Conversely, we verify Conditions 1--3 when $\langle f_1, \ldots , f_s \rangle $ is the vanishing ideal of $X$:
\begin{enumerate}
    \item $ X \subset \V (f_1, \ldots , f_s)$ since each $f_i$ vanishes on all points of $X$.
On the other hand, we have $X = \V (g_1, \ldots , g_s)$ for certain homogeneous polynomials $g_1, \ldots , g_s,$ all of which must be contained in $\langle f_1, \ldots , f_s \rangle $.
If $f_1,\ldots , f_s$ vanish at a point, so must $g_1, \ldots, g_s$, and thus $\V (f_1, \ldots , f_s) \subset X.$
\item Let $f \in \langle f_1, \ldots , f_s \rangle : (\frakm_{\xx_1} \cap \cdots \cap \frakm_{\xx_k})^\infty.$
To show $f \in \langle f_1, \ldots , f_s \rangle ,$ it is enough to show that each of the homogeneous components of $f$ vanish on $X,$ so suppose further that $f$ is homogeneous. 
If $X$ is empty, then $f$ vanishing on $X$ holds vacuously.
Otherwise, for any point in $X$ there exists some monomial of the form
\[
m(x) = x_{1, i_1} \, \cdots \, x_{k, i_k}  \in \frakm_{\xx_1} \cap \cdots \cap \frakm_{\xx_k}
\]
which does not vanish at that point.
Since $m(x)^n f$ is in the vanishing ideal for some $n\ge 1,$ we see that that $f$ must vanish at this point.
    \item If $f^n (x) =0$ for some $n\ge 1$ and all $x\in X$, then $f(x) =0$ for all $x\in X.$
\end{enumerate}
\end{proof}

\section{Generators of $G_{\minorideal}$}
\label{sec:GB-appendix}
The Gr\"{o}bner basis $G_{M_{\bA,\qq,\pp}^{m,1}}$ of~\Cref{prop:MAqp-GB} contains elements in degree $3,4,5,6,7,8,9.$ Here, for completeness, we give explicit formulas for all of them.
\\\\
\textbf{Degree 3:} $3m$ elements---for $1 \le i \le m ,$ $1\le  i_1 < i_2 \le 3, \, \det \Big( \left(\BM A_i q & p_i \EM\right) [ \{i_1, i_2 \} , :] \Big).$
\\\\
\textbf{Degree 4:} $m$ elements---for $1 \le i \le m , \, \det \left(\BM A_i[:,1] & A_i q & p_i \EM\right).$
\\\\
\textbf{Degree 5:} $9\, {m \choose 2}$ elements---for $1 \le i < j \le  m ,$ $1 \le i_1 < i_2 \le 3,$ $1\le  j_1 < j_2 \le 3,$
    \begin{flalign*}
    \det \Big( \left(\BM A_i[:,1] & p_i\EM\right) [\{i_1, i_2\} , :] \Big) \cdot 
    \det \Big( \left(\BM A_j q  & p_j\EM\right) [ \{j_1, j_2 \} , : ] \Big)  & -\\ 
    \, \det \Big( \left(\BM A_j[:,1] & p_j\EM\right) [\{j_1, j_2\}, :] \Big) \cdot 
    \det \Big( \left(\BM A_i q  & p_i\EM\right) [\{i_1, i_2\}, :] \Big)  & .
    \end{flalign*}
\textbf{Degree 6:} $6\, {m \choose 2}$ elements---for $1 \le k_1 < k_2 \le 3,$ $1\le i , j \le m,$ $i\ne j,$
    \begin{flalign*}
\sum_{l=1}^3 (-1)  \cdot 
  &\det \Big(\left(\BM A_j q & p_j \EM\right) [\{ 1,2,3 \} \setminus l , :] \Big) \, \cdot 
 \Bigg( p_i[k_1] \det \left(\BM A_i [k_2, 1:2] \\ 
                                  A_j [l, 1:2]
                    \EM\right)                 
 - p_i[k_2] \det \left(\BM A_i [k_1, 1:2] \\ 
                               A_j [l,1:2]
                    \EM\right)
\Bigg) \\
+ &\det \left(\BM A_j[:,1] & A_j[:,2] & p_j \EM\right) \cdot \det \Big(\left(\BM A_i q & p_i\EM\right) [ \{k_1, k_2\}, :] \Big) .
\end{flalign*}
\textbf{Degree 7:} $\binom{m}{2}$ elements of the form $q_4$ times a $2$-focal, plus an additional $27 \binom{m}{3}$ elements---for $1\le i <j < k \le m,$ $1\le i_1 < i_2\le 3,$ $1\le j_1 < j_2 \le 3,$ $1\le k_1 < k_2 \le 3,$
    \begin{flalign*}
    \det \Big( \left(\BM A_i q & p_i \EM\right)[\{i_1, i_2\}, :] \Big) \cdot 
    \Bigg(
    &p_j[j_1] p_k[k_1] \cdot \det \left(\BM A_j [j_2, 1:2] \\ A_k [k_2, 1:2] \EM\right) - p_j[j_1] p_k[k_2] \cdot \det \left(\BM A_j [j_2, 1:2]\\ A_k [k_1, 1:2] \EM\right) -\\
    &p_j[j_2] p_k[k_1] \cdot \det \left(\BM A_j [j_1, 1:2] \\ A_k [k_2, 1:2] \EM\right) + p_j[j_2] p_k[k_2] \cdot \det \left(\BM A_j [j_1, 1:2]\\ A_k [k_1, 1:2] \EM\right)
    \Bigg)-\\
    \det \Big( \left(\BM A_j q & p_j \EM\right) [\{j_1, j_2\}, :] \Big) \cdot 
    \Bigg(
    &p_i[i_1] p_k[k_1] \cdot \det \left(\BM A_i [i_2, 1:2] \\ A_k [k_2, 1:2] \EM\right) - p_i[i_1] p_k[k_2] \cdot \det \left(\BM A_i [i_2, 1:2] \\ A_k [k_2, 1:2] \EM\right) -\\ 
    &p_i[i_2] p_k[k_1] \cdot \det \left(\BM A_i [i_2, 1:2] \\ A_k [k_2, 1:2] \EM\right) + p_i[i_2] p_k[k_2] \cdot \det \left(\BM A_i [i_1, 1:2] \\ A_k [k_1, 1:2] \EM\right) 
    \Bigg)+ \\
    \det \Big(\left(\BM A_k q & p_k \EM\right) [\{k_1, k_2\}, :] \Big) \cdot 
    \Bigg(
      &p_i[i_1] p_j[j_1] \cdot \det \left(\BM A_i [i_2, 1:2] \\ A_j [j_2, 1:2] \EM\right) - p_i[i_1] p_j[j_2] \cdot \det \left(\BM A_i [i_2, 1:2] \\ A_j [j_1, 1:2] \EM\right) -\\ 
    &p_i[i_2] p_j[j_1] \cdot \det \left(\BM A_i [i_1, 1:2] \\ A_j [j_2, 1:2] \EM\right) + p_i[i_2] p_j[j_2] \cdot \det \left(\BM A_i [i_1, 1:2]\\ A_j [j_1, 1:2] \EM\right) 
    \Bigg)  
    \end{flalign*}
\textbf{Degrees 8 \& 9:} $q_4$ times $27 \binom{m}{3}$ $3$-focals and $q_4$ times $81 \binom{m}{4}$ $4$-focals, respectively. 
\end{document}